\documentclass[preprint,hidelinks,onefignum,onetabnum,reqno]{siamart250211}


\usepackage{lipsum}
\usepackage{amsfonts}
\usepackage{graphicx}
\usepackage{epstopdf}
\usepackage{algorithmic}
\usepackage{subfig}
\usepackage{cleveref}
\usepackage{hyperref}

\ifpdf
  \DeclareGraphicsExtensions{.eps,.pdf,.png,.jpg}
\else
  \DeclareGraphicsExtensions{.eps}
\fi


\newsiamremark{remark}{Remark}
\newsiamremark{hypothesis}{Hypothesis}
\crefname{hypothesis}{Hypothesis}{Hypotheses}
\newsiamthm{claim}{Claim}
\newsiamremark{fact}{Fact}
\crefname{fact}{Fact}{Facts}

\headers{A DeepLagrangian method for Keller-Segel chemotaxis systems }{Y. Feng, M. K. Ng and Z. Zhang}

\title{A DeepLagrangian method for learning and generating aggregation patterns in multi-dimensional Keller-Segel chemotaxis systems\thanks{Submitted to the editors DATE.
\funding{M. Ng’s research is supported in part by the National Key Research and Development Program of China under Grant 2024YFE0202900, GDSTC: Guangdong and Hong Kong Universities “1+1+1” Joint Research Collaboration Scheme UICR0800008-24, Hong Kong RGC grant project (No. 12300125), and Joint NSFC and RGC N-HKU769/21. Z. Zhang’s research is supported in part by the National Natural Science Foundation of China (No. 92470103), Hong Kong RGC grant project (No. 17300325), and Seed Funding for Strategic Interdisciplinary Research Scheme 2021/22 (HKU). }}}
\author{Yani Feng\thanks{Department of Mathematics, The University of Hong Kong
  (\email{fengyn@hku.hk}, \email{zhangzw@hku.hk}).}
\and Michael K. Ng\thanks{Department of Mathematics, Hong Kong Bapist University
  (\email{michael-ng@hkbu.edu.hk}).}
\and Zhiwen Zhang\footnotemark[2]}

\usepackage{amsopn}


\ifpdf
\hypersetup{
  pdftitle={A DeepLagrangian method for learning and generating aggregation patterns in multi-dimensional Keller-Segel chemotaxis systems},
  pdfauthor={Y. Feng, M. K. Ng and Z. Zhang}
}
\fi


\externaldocument[][nocite]{ex_supplement}


\begin{document}

\maketitle

\begin{abstract}
The Keller-Segel (KS) chemotaxis system is used to describe the overall behavior of a collection of cells under the influence of chemotaxis. However, solving the KS chemotaxis system and generating its aggregation patterns remain challenging due to the emergence of solutions exhibiting near-singular behavior, such as finite-time blow-up or concentration phenomena. Building on a Lagrangian framework of the KS system, we develop DeepLagrangian, a self-adaptive density estimation method that learns and generates aggregation patterns and near-singular solutions of the KS system in two- and three-dimensional (2D and 3D) space under different physical parameters. The main advantage of the Lagrangian framework is its inherent ability to adapt to near-singular solutions. To develop this framework, we normalize the KS solution into a probability density function (PDF), derive the corresponding normalized KS system, and utilize the property of the continuity equation to rewrite the system into a Lagrangian framework. We then define a physics-informed Lagrangian loss to enforce this framework and incorporate a flow-based generative model, called the time-dependent KRnet, to approximate the PDF by minimizing the loss. Furthermore, we integrate time-marching strategies with the time-dependent KRnet to enhance the accuracy of the PDF approximation.  After obtaining the approximate PDF, we recover the original KS solution. We also prove that the Lagrangian loss effectively controls the Kullback-Leibler (KL) divergence between the approximate PDF and the exact PDF. In the numerical experiments, we demonstrate the accuracy of our DeepLagrangian method for the 2D and 3D KS chemotaxis system with/without advection.
\end{abstract}

\begin{keywords}
 Keller-Segel chemotaxis system; Lagrangian framework; probability density function;  time-dependent KRnet; time marching strategies.
\end{keywords}

\begin{MSCcodes}
35K15, 65M75, 68T07.
\end{MSCcodes}

\section{Introduction}
Chemotaxis is a fundamental biological phenomenon where organisms or cells move directionally in response to spatial gradients of chemical substances. 
This behavior underlies diverse behaviors, from microbial survival to complex multicellular functions, with profound implications for health and disease \cite{zhou2023bacterial,roussos2011chemotaxis,cremer2019chemotaxis}.
Chemotaxis is often modeled using partial differential equations (PDEs) to describe the interplay between cell movement and the dynamics of chemical signals.
The Keller-Segel (KS) system \cite{keller1970initiation} is the first mathematical model for chemotaxis to describe the aggregation behavior of the cellular slime mold Dictyostelium discoideum.
The general form of the KS system is 
\begin{equation}\label{ks}
	\left\{
	\begin{array}{ll}
	\frac{\partial \rho(\mathbf{x},t)}{\partial t}=\nabla\cdot(\mu \nabla \rho(\mathbf{x},t)-\chi \rho(\mathbf{x},t) \nabla c(\mathbf{x},t)), &\mathbf{x} \in\Omega,t>0,\\
	\epsilon \frac{\partial c(\mathbf{x},t)}{\partial t}=\Delta c(\mathbf{x},t)-k^2 c(\mathbf{x},t)+\rho(\mathbf{x},t), &\mathbf{x}\in\Omega,t>0,\\
	\frac{\partial \rho(\mathbf{x},t)}{\partial \nu}=\frac{\partial c(\mathbf{x},t)}{\partial \nu}=0, &\mathbf{x}\in\partial\Omega, t>0,\\
\rho(\mathbf{x},0)=\rho^0(\mathbf{x}),c(\mathbf{x},0)=c^0(\mathbf{x}),&\mathbf{x}\in \Omega,
	\end{array}
	\right.
\end{equation}
in a bounded domain $\Omega\subset \mathbb{R}^d$ with a smooth boundary $\partial \Omega$, where $\frac{\partial}{\partial \nu}$ denotes differentiate with respect to the outward normal $\nu$ on $\partial \Omega$. The initial conditions $\rho^0(\mathbf{x})$ and $c^0(\mathbf{x})$ are assumed to be nonnegative.
$\rho(\mathbf{x},t)$ and $c(\mathbf{x},t)$ represent the density of the cell population and the chemical concentration, respectively. $\mu>0$ is the cellular motility. The constant $\chi>0$ measures the attraction rate of the chemical gradient on the directed cellular movement. $k^2$ models the phenomenon that the chemical is consumed by certain enzymes in the environment.

In the special case where $\epsilon=0$ and $k=0$, the second equation of system \eqref{ks} reduces to the classical Poisson equation
$-\Delta c=\rho(\mathbf{x},t)$. The chemical concentration can be expressed as 
\begin{align}\label{poisson}
    c(\mathbf{x},t)=-\mathcal{K}*\rho.
\end{align}
Here $\mathcal{K}$ denotes the Green function of the Laplacian operator and $*$ means convolution, e.g. $d=2$,
$
    \mathcal{K}(\mathbf{x},\mathbf{y})=\frac{1}{2\pi}\log {\| \mathbf{x}-\mathbf{y} \|},
$
for $d=3$,
$
    \mathcal{K}(\mathbf{x},\mathbf{y})=-\frac{1}{4\pi \| \mathbf{x}-\mathbf{y} \|}.
$
Substituting \eqref{poisson} into the first equation of \eqref{ks} gives the following non-linear advection–diffusion PDE,
\begin{align}\label{dens}
    \frac{\partial \rho(\mathbf{x},t)}{\partial t}=\nabla\cdot(\mu \nabla \rho(\mathbf{x},t)+\chi \rho(\mathbf{x},t) \nabla (\mathcal{K}*\rho)), \mathbf{x}\in \Omega,t>0.
\end{align}
One can add the advective Lie derivative $\rho(\mathbf{x},t)$ on the left hand side of \eqref{dens} to describe chemotaxis in a fluid environment \cite{kiselev2016suppression,khan2015global}:
\begin{align}\label{simple_ks}
    \frac{\partial \rho(\mathbf{x},t)}{\partial t}+\nabla \cdot(\rho(\mathbf{x},t) \mathbf{v})=\nabla\cdot(\mu \nabla \rho(\mathbf{x},t)+\chi \rho(\mathbf{x},t) \nabla (\mathcal{K}*\rho)), \mathbf{x}\in \Omega,t>0.
\end{align}
The flow field $\mathbf{v}$ is generally known with physical parameters and 
can be applied to alleviate the blow-up or aggregation behavior of \eqref{dens}. 

Traditional numerical methods for the KS system include finite element methods, finite volume methods, and spectral methods. Carrillo et al. \cite{carrillo2019hybrid} propose a hybrid mass transport finite element method for 1D KS-type systems. Filbet \cite{filbet2006finite} develops a finite volume scheme
for the 2D Patlak–Keller–Segel
chemotaxis model. Shen et al. \cite{shen2020unconditionally} use the Fourier spectral method for a class of 2D KS equations. Strehl et al. \cite{strehl2013positivity} give a positivity-preserving finite element method for 3D chemotaxis problems. These methods need to discretize the KS system on mesh grids, but the number of mesh grid points scales exponentially with the spatial dimension.

In recent years, the advent of deep neural networks (DNNs) has opened up new possibilities for solving PDEs, primarily by representing a PDE solution through neural network parameterization. Many efficient approaches include Physics Informed Neural Networks (PINNs) \cite{raissi2019physics},
Deep Galerkin Method (DGM) \cite{sirignano2018dgm} and  Deep Ritz Method (DRM) \cite{yu2018deep}.  
However, they encounter significant difficulties in solving PDEs with near-singular solutions, such as the KS system. There exist some works addressing the issue, including singularity-enriched PINN (SEPINN) \cite{hu2024solving} and DAS-PINNs \cite{tang2023pinns}. SEPINN enriches the ansatz space spanned by deep neural networks by incorporating suitable singular functions to obtain more accurate solutions. DAS-PINNs develop a deep adaptive sampling method and combine it with PINNs to improve the accuracy of PDE solutions iteratively. These methodologies predominantly occur within the Eulerian framework, where PDE solutions are defined either in the strong sense or the weak (variational) sense.

To address the issue, we develop a Lagrangian framework for computing aggregation patterns and near-singular solutions of the KS chemotaxis system \eqref{simple_ks}, referred to as the DeepLagrangian method, which is a self-adaptive density estimation method. The fundamental concept of the Lagrangian framework is to represent PDE solutions using particles. It has three key advantages: first, it is mesh-free in space; second, it is self-adaptive, enabling it to naturally track near-singular solutions; and third, its computational costs increase linearly with the dimension of the spatial variables. Some progress has been made in developing Lagrangian methods to solve a class of KS chemotaxis systems \cite{liu2017random,havskovec2009stochastic,wang2025novel} and compute effective diffusivities in chaotic or random flows \cite{wang2018computing,lyu2020convergence}. Recently, deep learning-based Lagrangian methods have been proposed to solve time-dependent Fokker-Planck equations \cite{shen2022self,li2023self,boffi2023probability} and, more generally, McKean-Vlasov Type PDEs \cite{shen2023entropy}.  Our DeepLagrangian method falls within this category of deep learning-based Lagrangian methods.
 
To establish the Lagrangian framework, the first step of our DeepLagrangian method is to normalize a PDE solution of \eqref{simple_ks} into a PDF from which a new KS equation for the PDF can be derived correspondingly. Then, we reformulate the new KS equation into a Lagrangian framework by using the property of the continuity equation.
Note that the Lagrangian framework involves both the PDF and the corresponding samples/particles. However, directly approximating the PDF and generating the samples/particles using conventional neural networks (e.g., Multi-Layer Perceptrons (MLPs), Convolutional Neural Networks (CNNs)) causes some difficulties: first, conventional neural networks cannot directly guarantee the non-negativity, normalization, and vanishing at infinity properties of the PDF; second, drawing samples/particles from a PDF parameterized by such networks is intractable. Flow-based generative models, such as NICE \cite{Dinh_2014}, Real NVP \cite{dinh2016density}, and KRnet \cite{tang2020deep}, are capable of overcoming these difficulties. 
In particular, we employ a time-dependent KRnet to approximate the PDF, as the KRnet offers greater expressive power than the Real NVP and tends to be more stable during training, especially when dealing with the KS system with near-singular solutions.

The main contributions of this paper are as follows:
\begin{itemize}
    \item We build a Lagrangian framework of the KS system and then develop DeepLagrangian for learning aggregation patterns and near-singular solutions of the KS system, where the KS solution is normalized into a PDF.
    \item We define a physics-informed Lagrangian loss \eqref{La} to enforce this framework and utilize a time-dependent KRnet to approximate the PDF by minimizing the loss, and incorporate a time-marching strategy with the time-dependent KRnet to enhance the accuracy of the PDF approximation.
    \item We provide a rigorous theoretical analysis to show that the KL divergence between the exact PDF and the approximate PDF can be bounded by the Lagrangian loss (see \Cref{therorem 4}).
    \item We demonstrate the accuracy of our DeepLagrangian method through extensive numerical experiments across various scenarios, including 2D KS systems with different initial conditions and physical parameters, as well as 3D KS systems. Our results also show that our method outperforms Eulerian methods such as PINNs (see \Cref{test1_section}).
\end{itemize}

The rest of the paper is organized as follows. In Section 2, we present a detailed derivation of the Lagrangian framework of the KS equation, where the solution of the KS equation is normalized into a PDF. In Section 3, we propose our DeepLagrangian method: we define a physics-informed Lagrangian loss and introduce a time-dependent KRnet, combined with time marching strategies, to approximate the PDF by minimizing the loss. In Section 4, we prove that the Lagrangian loss can control the KL divergence between the exact PDF and the approximate PDF. In Section 5, we conduct numerical experiments to demonstrate the accuracy of our DeepLagrangian method. Finally, some concluding remarks are given in Section 6.

\section{Lagrangian framework}
The KS equation mentioned in Section 1 is considered, 
\begin{align}\label{fks}
    \frac{\partial \rho(\mathbf{x},t)}{\partial t}+\nabla \cdot(\rho(\mathbf{x},t) \mathbf{v})=\nabla\cdot(\mu \nabla \rho(\mathbf{x},t)+\chi \rho(\mathbf{x},t) \nabla (\mathcal{K}*\rho)), \mathbf{x}\in \Omega,t>0,
\end{align}
with the initial condition $\rho^0(\mathbf{x})$ and $\Omega\subset \mathbb{R}^d$.
The key property of \eqref{fks} is the conservation of mass: 
\begin{align*}
    \frac{d}{dt}\int_{\Omega}\rho(\mathbf{x},t)d\mathbf{x}&=-\int_{\Omega}\nabla \cdot(\rho(\mathbf{x},t) \mathbf{v})d\mathbf{x}+\int_{\Omega}\nabla\cdot(\mu \nabla \rho(\mathbf{x},t)+\chi \rho(\mathbf{x},t) \nabla (\mathcal{K}*\rho))d\mathbf{x}\\
    &=-\int_{\partial \Omega} \rho(\mathbf{x},t) \mathbf{v}\cdot \nu d\mathbf{x}+\int_{\partial \Omega} (\mu \nabla \rho(\mathbf{x},t)-\chi \rho(\mathbf{x},t) \nabla c)\cdot \nu d\mathbf{x}\\
    &=0,
\end{align*}
where $\rho(\mathbf{x},t) \to 0$ as $\Vert \mathbf{x} \Vert \to 0$.
It follows that the total mass $M:=\int_{\Omega}\rho(\mathbf{x},t)d \mathbf{x}$ is equal to $\int_{\Omega}\rho^0(\mathbf{x}) d\mathbf{x}$. For the KS equation \eqref{fks} without advection ($d=2$,$\chi=\mu=1$, $\mathbf{v}=0$), it is known that if the total mass $M$ is greater than $8\pi$, the system will blow up in a finite time \cite{perthame2004pde}.
 Our interest lies in studying near-singular solutions of such KS equations using deep learning-based Lagrangian methods. 

To derive a Lagrangian framework, we first normalize the time-dependent density $\rho(\mathbf{x},t)$ in \eqref{fks} by the total mass $M$, defining the probability density function $\bar{\rho}(\mathbf{x},t)=\rho(\mathbf{x},t)/M$. 
Substituting this normalization into the original KS equation \eqref{fks} yields the following form: 
\begin{align}\label{fks_density}
    \frac{\partial \bar{\rho}(\mathbf{x},t)}{\partial t}+\nabla \cdot\Big(\bar{\rho}(\mathbf{x},t) \mathbf{v}\Big)=\nabla\cdot\Big(\mu \nabla \bar{\rho}(\mathbf{x},t)+\chi M\bar{\rho}(\mathbf{x},t) \nabla (\mathcal{K}*\bar{\rho})\Big),
\end{align}
with the initial PDF $\bar{\rho}^0(\mathbf{x}):=\rho^0(\mathbf{x})/M$.
Furthermore, \eqref{fks_density} can be reformulated as
\begin{align*}
    \frac{\partial \bar{\rho}(\mathbf{x},t)}{\partial t}&=-\nabla \cdot\Big(\bar{\rho}(\mathbf{x},t) \mathbf{v}\Big)+\nabla\cdot\Big(\mu \bar{\rho}(\mathbf{x},t)\nabla \log\bar{\rho}(\mathbf{x},t)+\chi M\bar{\rho}(\mathbf{x},t) \nabla (\mathcal{K}*\bar{\rho})\Big),
\end{align*}
which is equivalent to 
\begin{align}
    &\frac{\partial \bar{\rho}(\mathbf{x},t)}{\partial t}+\nabla \cdot\Big(\bar{\rho}(\mathbf{x},t)v(\mathbf{x},t)\Big) =0, \label{ct_e}\\
    &v(\mathbf{x},t):=\mathcal{A}[\bar{\rho}(\mathbf{x},t)]=\mathbf{v}-\mu\nabla \log\bar{\rho}(\mathbf{x},t)-\chi M \nabla (\mathcal{K}*\bar{\rho}), \label{vv}
\end{align}
where $v$ is the underlying velocity field, and $\mathcal{A}[\bar{\rho}(\mathbf{x},t)]$ denotes the operator $\mathcal{A}$ acting on $\bar{\rho}(\mathbf{x},t)$.

It is important to see that \eqref{ct_e}
 corresponds to the diffusion-free Fokker-Planck equation, which can also be interpreted as the continuity equation. Mathematically, this equation is equivalent to the deterministic ordinary differential equation (ODE), 
\begin{equation}\label{ode}
\left\{
\begin{array}{ll}
\frac{d\mathbf{x}(t)}{dt}=v(\mathbf{x}(t),t), &t>0,\\
\mathbf{x}(0)\sim \bar{\rho}^0(\mathbf{x}).&\\
\end{array}
\right.
\end{equation}
By combining \eqref{vv} and \eqref{ode}, 
the Lagrangian framework of the normalized KS equation \eqref{fks_density} is given by, 
\begin{align}\label{particle_NN}
   \frac{d\mathbf{x}}{dt}=\mathbf{v}-\mu\nabla \log\bar{\rho}(\mathbf{x},t)-\chi M \nabla (\mathcal{K}*\bar{\rho}),\ \mathbf{x}(0) \sim \bar{\rho}^0(\mathbf{x}).
\end{align}

\section{Methodology} 
In this section, we present DeepLagrangian, a self-adaptive density estimation method. Initially, a physics-informed Lagrangian loss is defined. Subsequently, a flow-based generative model, i.e., the time-dependent KRnet, is employed to approximate the PDF $\bar{\rho}$ in \eqref{particle_NN} by minimizing the defined loss. Additionally, a time marching strategy is utilized to effectively capture  rapid variations of the time-dependent PDF $\bar{\rho}$ over time.

\subsection{Physics-informed Lagrangian loss}
Assume that $z$ follows a simple distribution $p_{\mathbf{z}}(\mathbf{z})$, such as a standard normal distribution. An invertible network with parameters $\theta$, denoted by $\Phi_{\theta}(\mathbf{z},t)$, is applied to parameterize $\mathbf{x}$ in \eqref{particle_NN}, i.e., 
\begin{align}\label{iNN}
    \mathbf{x}=\Phi_{\theta}(\mathbf{z},t), \quad \mathbf{z}=\Phi^{-1}_{\theta}(\mathbf{x},t),
\end{align}
and the resulting PDF $\bar{\rho}_{\theta}(\mathbf{x},t)$ can be estimated by the change of variable,
\begin{align}\label{pdf_compute}
    \bar{\rho}_{\theta}(\mathbf{x},t)=p_z(\Phi^{-1}_{\theta}(\mathbf{x},t)) \det\vert \nabla_x \Phi^{-1}_{\theta}(\mathbf{x},t)\vert.
\end{align} 
The velocity field $v_{\theta}(\mathbf{x},t)$ can be recovered via
\begin{align*}
	v_{\theta}(\mathbf{x},t)=\frac{d\mathbf{x}}{dt}=\frac{d \Phi_{\theta}(\mathbf{z},t)}{dt}=\frac{d \Phi_{\theta}(\Phi^{-1}_{\theta}(\mathbf{x},t),t)}{dt},
\end{align*}
which corresponds to 
\begin{align}
&\frac{\partial\bar{\rho}_{\theta}(\mathbf{x},t) }{\partial t}+\nabla \cdot(\bar{\rho}_{\theta}(\mathbf{x},t)v_{\theta}(\mathbf{x},t)) =0.\label{fks_approx}
\end{align}
\eqref{fks_approx} can be rewritten as
\begin{align}\label{fksA_density}
	\frac{\partial\bar{\rho}_{\theta}(\mathbf{x},t) }{\partial t}+\nabla \cdot\Big(\bar{\rho}_{\theta}(\mathbf{x},t)(\mathcal{A}[\bar{\rho}_{\theta}(\mathbf{x},t)]+\epsilon(\mathbf{x},t))\Big),
\end{align}
where $\epsilon(\mathbf{x},t)=v_{\theta}(\mathbf{x},t)-\mathcal{A}[\bar{\rho}_{\theta}(\mathbf{x},t)]$.

According to \eqref{particle_NN}, a physics-informed  Lagrangian loss is defined as follows, 
\begin{align}
    &\mathcal{L}(\theta)=\int_{0}^T\int_{\Omega} \Big\Vert \epsilon(\mathbf{x},t) \Big\Vert^2 \bar{\rho}_{\theta}(\mathbf{x},t) d\mathbf{x} dt \nonumber \\
	&=\int_{0}^T\int_{\Omega} \Big\Vert \frac{d\mathbf{x}}{dt}-\Big(\mathbf{v}-\mu\nabla \log\bar{\rho}_{\theta}(\mathbf{x},t)-\chi M \nabla (\mathcal{K}*\bar{\rho}_{\theta})\Big) \Big\Vert^2 \bar{\rho}_{\theta}(\mathbf{x},t) d\mathbf{x} dt.\label{La}
\end{align}
To compute $\mathcal{L}(\theta)$ via Monte Carlo integration, we begin by sampling $t_i \sim U[0,T]$, for $i=1,\dots,I$, 
    \begin{align*}
	&\mathcal{L}(\theta)\approx \frac{T}{I}\sum_{i=1}^I \int_{\Omega} \Big\Vert \frac{d\mathbf{x}}{dt}-\Big(\mathbf{v}-\mu\nabla \log\bar{\rho}_{\theta}(\mathbf{x},t_i)-\chi M \nabla (\mathcal{K}*\bar{\rho}_{\theta})\Big) \Big\Vert^2 \bar{\rho}_{\theta}(\mathbf{x},t_i) d\mathbf{x}\nonumber\\
    &\approx \frac{T}{I}\sum_{i=1}^I \int_{\Omega} \Big\Vert \frac{d\mathbf{x}}{dt}-\Big(\mathbf{v}-\mu\nabla \log\bar{\rho}_{\theta}(\mathbf{x},t_i)-\chi M \nabla (\mathcal{K}*\bar{\rho}_{\theta})\Big) \Big\Vert^2 p_{\mathbf{z}}(\mathbf{z}) dz,\, \mathbf{x}=\Phi_{\theta}(\mathbf{z},t_i),
    \end{align*}
    and then, for each $t_i$, we sample $\mathbf{x}^{i,j}\sim \bar{\rho}_{\theta}(\mathbf{x},t_i)$, i.e., $\mathbf{x}^{i,j}=\Phi_{\theta}(\mathbf{z}^{i,j},t_i)$, $\mathbf{z}^{i,j}\sim p_{\mathbf{z}}(\mathbf{z})$, for $j=1,\dots,J$,
    \begin{align}
	&\mathcal{L}(\theta)\approx \frac{T}{I}\frac{1}{J}\sum_{i=1}^I\sum_{j=1}^J \Big\Vert \frac{d\mathbf{x}^{i,j}}{dt}-\Big(\mathbf{v}-\mu\nabla \log\bar{\rho}_{\theta}(\mathbf{x}^{i,j},t_i)-\chi M \nabla (\mathcal{K}*\bar{\rho}_{\theta})\Big) \Big\Vert^2  \nonumber\\
	&\approx \frac{T}{I}\frac{1}{J}\sum_{i=1}^I\sum_{j=1}^J \Big\Vert \frac{d\mathbf{x}^{i,j}}{dt}-\Big(\mathbf{v}-\mu\nabla \log\bar{\rho}_{\theta}(\mathbf{x}^{i,j},t_i)-\chi M \int_{\Omega}\nabla \mathcal{K}(\mathbf{x}^{i,j},\mathbf{y})\bar{\rho}_{\theta}(\mathbf{y},t_i) d\mathbf{y} \Big) \Big\Vert^2 \nonumber\\
	&\approx\frac{T}{I}\frac{1}{J}\sum_{i=1}^I\sum_{j=1}^J \Big\Vert \frac{d\mathbf{x}^{i,j}}{dt}-\Big(\mathbf{v}-\mu\nabla \log\bar{\rho}_{\theta}(\mathbf{x}^{i,j},t_i)-\chi M \frac{1}{K}\sum_{k=1}^K \nabla \mathcal{K}(\mathbf{x}^{i,j},\mathbf{y}^{i,k}) \Big) \Big\Vert^2 \nonumber\\
	&\approx\frac{T}{I}\frac{1}{J}\sum_{i=1}^I\sum_{j=1}^J \Big\Vert \frac{d\mathbf{x}^{i,j}}{dt}-\Big(\mathbf{v}-\mu\nabla \log\bar{\rho}_{\theta}(\mathbf{x}^{i,j},t_i)-\chi M \frac{1}{K}\sum_{k=1}^K \nabla \mathcal{K}_{\delta}(\mathbf{x}^{i,j},\mathbf{y}^{i,k}) \Big) \Big\Vert^2,  \label{La_em}
\end{align}
where $\mathbf{y}^{i,k}=\Phi_{\theta}(\mathbf{z}^{i,k},t_i)$, $\mathbf{z}^{i,k}\sim p_{\mathbf{z}}(\mathbf{z})$, for $k=1,\dots,K$, and we set $K=J$ in numerical experiments.
The chemo-attract term 
$\nabla (\mathcal{K}*\bar{\rho}_{\theta}(\mathbf{x},t))$ is approximated via Monte-Carlo integration. Since the chemo-attract term 
$\nabla (\mathcal{K}*\bar{\rho}(\mathbf{x},t))$ can cause numerical instability when particles are very close together,
$\mathcal{K}$ is substituted with a smoothed approximation $\mathcal{K}_{\delta}$ \cite{wang2024deepparticle}, where $\delta$ is a regularization parameter.
 
In addition, \eqref{particle_NN} needs to satisfy the initial condition $\bar{\rho}^0(\mathbf{x})$.
This condition can either be inherently integrated into the network architecture \cite{he2025adaptive} or explicitly enforced by introducing an additional loss term $D_{KL}\left(\bar{\rho}^0(\mathbf{x})||\bar{\rho}_{\theta}(\mathbf{x},0)\right)$,
\begin{align*}
	D_{KL}\left(\bar{\rho}^0(\mathbf{x})||\bar{\rho}_{\theta}(\mathbf{x},0)\right)&=\int_{\Omega} \bar{\rho}^0(\mathbf{x})\log \frac{\bar{\rho}^0(\mathbf{x})}{\bar{\rho}_{\theta}(\mathbf{x},0)} d\mathbf{x} \nonumber\\
	&=\int_{\Omega} \bar{\rho}^0(\mathbf{x}) \log \bar{\rho}^0(\mathbf{x})d\mathbf{x}-\int_{\Omega} \bar{\rho}^0(\mathbf{x}) \log \bar{\rho}_{\theta}(\mathbf{x},0)d\mathbf{x}, 
\end{align*}
which is equivalent to adding the following loss term,
\begin{align*}
	-\int_{\Omega} \bar{\rho}^0(\mathbf{x}) \log \bar{\rho}_{\theta}(\mathbf{x},0)d\mathbf{x} \approx -\frac{1}{J}\sum_{j=1}^{J} 
	 \log \bar{\rho}_{\theta}(\mathbf{x}^j,0), \, \mathbf{x}^j \sim \bar{\rho}^0(\mathbf{x}).
\end{align*}
 
\subsection{Time-dependent KRnet}
KRnet \cite{tang2020deep} is a flow-based generative model, which has a stronger expressive ability than RealNVP \cite{dinh2016density} and its training is more stable. We extend the KRnet to a time-dependent KRnet to construct the invertible network described in \eqref{iNN}. Subsequently, the PDF $\bar{\rho}$ in \eqref{particle_NN} is approximated as $\bar{\rho}_{\theta}$, which is computed through \eqref{pdf_compute}. 
Here, $\theta$ represents the trainable parameters of the time-dependent KRnet.

$\Phi^{-1}_{\theta}$ in \eqref{iNN} is constructed by stacking a sequence of time-dependent bijections. The key ingredient of the bijections is time-dependent affine coupling layers.  Let the input of the affine coupling layer be $\mathbf{\bar{x}}=[\mathbf{x}_{1},\mathbf{x}_{2}]^\mathsf{T}\in \mathbb{R}^m, \mathbf{x}_{1} \in \mathbb{R}^{m_1}, \mathbf{x}_{1} \in \mathbb{R}^{m-m_1}, m\leq d$.
The output of the affine coupling layer $\tilde{\mathbf{x}}=[\tilde{\mathbf{x}}_{1},\tilde{\mathbf{x}}_{2}]^\mathsf{T}$ is defined by
\begin{equation} \label{eqn_new_affine_coupling}
	\begin{aligned}
		\tilde{\mathbf{x}}_{1} &= \mathbf{x}_{1}, \\
		\tilde{\mathbf{x}}_{2} &= \mathbf{x}_{2} \odot \left(1 + \alpha \ \mathrm{tanh}(\mathbf{s}(\mathbf{x}_{1},t)) \right) + e^{\odot\mathbf{\beta}_1} \odot \mathrm{tanh}(\mathbf{t}(\mathbf{x}_{1},t)),
	\end{aligned}
\end{equation}
where $0<\alpha<1$ is a hyperparameter set to 0.6 in our numerical experiments, the parameter $\mathbf{\beta}_1 \in \mathbb{R}^{m-m_1}$ is trainable, and $\odot$ denotes the element-wise product.
 $(\mathbf{s},\mathbf{t})$ is usually modeled by a fully connected neural network $\mathsf{NN}_1$,
\begin{equation} \label{NN1}
	(\mathbf{s}, \mathbf{t}) = \mathsf{NN}_1(\mathbf{x}_{1},t).
\end{equation}
Since \eqref{eqn_new_affine_coupling} only updates ${\mathbf{x}}_{2}$, the following affine coupling layer
is required to update ${\mathbf{x}}_{1}$,
\begin{equation}
	\begin{aligned}
		\tilde{\tilde{\mathbf{x}}}_1 &= \tilde{\mathbf{x}}_{1} \odot \left( 1 + \alpha \ \mathrm{tanh}(\tilde{\mathbf{s}}(\tilde{\mathbf{x}}_{2},t)) \right) + e^{\odot\mathbf{\beta}_2} \odot \mathrm{tanh} \left( \tilde{\mathbf{t}}(\tilde{\mathbf{x}}_{2},t) \right), \\
		\tilde{\tilde{\mathbf{x}}}_{2} &= \tilde{\mathbf{x}}_{2},
	\end{aligned}
\end{equation}
where the parameter $\mathbf{\beta}_2 \in \mathbb{R}^{m_1}$ is trainable, and $(\tilde{\mathbf{s}},\tilde{\mathbf{t}})$ is the output of a fully connected neural network $\mathsf{NN}_2$,
\begin{equation} \label{NN2}
(\tilde{\mathbf{s}},\tilde{\mathbf{t}})= \mathsf{NN}_2(\tilde{\mathbf{x}}_{2},t).
\end{equation}
More details about the time-dependent KRnet can be found in \cite{tang2020deep,he2025adaptive}.

\subsection{Time marching strategy}
Neural networks often face challenges in approximating solutions of time-dependent PDEs, particularly when the time domain is prolonged or when the solutions change rapidly over time \cite{meng2020ppinn,penwarden2023unified}. To mitigate the challenges, a time marching strategy is applied to improve the accuracy of approximate solutions \cite{zhao2021solving,he2025adaptive}.
Its core idea is to divide the entire time domain into smaller intervals and then train a separate neural network for each of these intervals. 

Let $[0,T]$ be divided equally into $N$ time intervals, and let the time interval length be $\Delta t=\frac{T}{N}$. For each time interval $((n-1)\Delta t, n \Delta t],n=1,\dots,N$, a current time-dependent KRnet with parameters $\theta_n$ is trained, and the current PDF $\bar{\rho}_{\theta_n}(\mathbf{x},t)$ is computed by
\begin{align}
    \bar{\rho}_{\theta_n}(\mathbf{x},t)=p_z(\Phi_{\theta_n}^{-1}(\mathbf{x},t)) \det\vert \nabla_x \Phi_{\theta_n}^{-1}(\mathbf{x},t)\vert,\quad t\in ((n-1)\Delta t, n \Delta t].
\end{align}
Moreover, we regard the prediction from the time-dependent KRnet of the previous time interval as the initial condition for the current time-dependent KRnet. Hence, $\theta_n$ is obtained by minimizing the following loss function,
\begin{align}
	\hat{\mathcal{L}}(\theta_n)=&\frac{\Delta t}{I}\frac{1}{J}\sum_{i=1}^I\sum_{j=1}^J \Big\Vert \frac{d\mathbf{x}^{i,j}}{dt}-\Big(\mathbf{v}-\mu\nabla \log\bar{\rho}_{\theta_n}(\mathbf{x}^{i,j},t_i)-\chi M \frac{1}{K}\sum_{k=1}^K \nabla \mathcal{K}_{\delta}(\mathbf{x}^{i,j},\mathbf{y}^{i,k}) \Big) \Big\Vert^2 \\
	&-\frac{\beta}{J}\sum_{j'=1}^{J} 
	 \log \bar{\rho}_{\theta_n}(\mathbf{x}^{j'},(n-1)\Delta t), \nonumber
\end{align}
where $\mathbf{x}^{i,j}=\Phi_{\theta_n}(\mathbf{z}^{i,j},t_i)$, $\mathbf{y}^{i,k}=\Phi_{\theta_n}(\mathbf{z}^{i,k},t_i)$, $t_i\sim U[(n-1)\Delta t,n \Delta t]$, $\mathbf{z}^{i,j}\sim p_{\mathbf{z}}(\mathbf{z})$, $\mathbf{z}^{i,k}\sim p_{\mathbf{z}}(\mathbf{z})$, $i=1,\dots,I$, $j=1,\dots,J$, $k=1,\dots,K$
, $\mathbf{x}^{j'} \sim \bar{\rho}_{\theta^*_{n-1}}(\mathbf{x},(n-1)\Delta t)$, $j'=1,\dots,J$, and $\theta^*_{n-1}=\underset{{\theta_{n-1}}}{\arg \min} \ \hat{\mathcal{L}}(\theta_{n-1})$. $\beta$ is a penalty parameter for initial conditions. 

The details of solving the KS equation \eqref{fks} are summarized in \Cref{alg}. The output of this algorithm is particles $\{\mathbf{x}_n^j\}_{j=1}^{J_s}$ at time $n\Delta t$ and the approximate solution $\rho_{\theta_n^*}(\mathbf{x},t)=M\bar{\rho}_{\theta_n^*}(\mathbf{x},t), t\in\Big((n-1)\Delta t,n\Delta t\Big]$, where $\theta_n^*$ represents the optimal parameters of the time-dependent KRnet, for $n=1,2,\dots,N$.
\begin{algorithm}[!htb]
	\caption{DeepLagrangian}
	\label{alg}
	\begin{algorithmic}[1]
		\REQUIRE The final time $T$, time interval length $\Delta t$, the number of time intervals $N=T/\Delta t$, the regularization parameter $\delta$,
		the total mass $M$, parameters of KS system  $(\mu,\chi)$, advection term $\mathbf{v}$, initial condition $\rho^0(\mathbf{x})$, 
		the number of time points for each time interval $I$, the number of spatial points  $J$, sample size for computing the chemo-attract term $K$, the prior of the time-dependent KRnet $p_z(z)$, batch size $n_{batch}$, maximum epoch number $E$, penalty parameter $\beta$, learning rate $\eta$, the number of particles $J_s$.
         \STATE Set $\bar{\rho}_{\theta_{0}^*}(\mathbf{x},0)=\bar{\rho}^0(\mathbf{x})=\rho^0(\mathbf{x})/M$.
		 \FOR {$ n= 1:N$}
		 \STATE Generate $\mathcal{T}:=\{t_i\}_{i=1}^{I}$, $t_i \sim U[(n-1)\Delta t,n\Delta t]$ and  for each $t_i$, generate $\mathcal{Z}_i:=\{\mathbf{z}^{i,j}\}_{j=1}^{J}$, $\tilde{\mathcal{Z}}_i:=\{\mathbf{z}^{i,k}\}_{k=1}^{K}$ , $\mathbf{z}^{i,j}\sim p_{\mathbf{z}}(\mathbf{z})$, $\mathbf{z}^{i,k}\sim p_{\mathbf{z}}(\mathbf{z})$.
		 \STATE Obtain the training dataset $\mathcal{D}=\{t_i,\mathcal{Z}_i\}_{i=1}^{I}$, $\tilde{\mathcal{D}}=\{t_i,\tilde{\mathcal{Z}}_i\}_{i=1}^{I}$, and generate $\mathcal{X}_0=\{\mathbf{x}^j\}_{j'=1}^J, \mathbf{x}^{j'} \sim \bar{\rho}_{\theta_{n-1}^*}(\mathbf{x},(n-1)\Delta t)$.
		 \STATE  Divide $\mathcal{D}$, $\tilde{\mathcal{D}}$ into $N_b$ mini-batches  $\{\mathcal{D}_{n_b}\}_{n_b=1}^{N_b}$, $\{\tilde{\mathcal{D}}_{n_b}\}_{n_b=1}^{N_b}$ respectively, where $N_b=\frac{I}{n_{batch}}$.
		 \STATE Initialize parameters $\theta_n$ of a time-dependent KRnet.
		 \FOR {$n_e = 1:E$}
		 \FOR {$n_b=1:N_b$}
		 \STATE Compute $\mathcal{X}_{n_b}=\Phi_{\theta_n}(\mathcal{D}_{n_b})$, $\tilde{\mathcal{X}}_{n_b}=\Phi_{\theta_n}(\tilde{\mathcal{D}}_{n_b})$ .
		 \STATE Compute the loss:
		 \begin{align*}
			&\hat{\mathcal{L}}_{batch}(\theta_n)\\
            =&\frac{\Delta t}{n_{batch}}\frac{1}{J}\sum_{i=1}^{n_{batch}}\sum_{j=1}^J \Big\Vert \frac{d\mathbf{x}^{i,j}}{dt}-\Big(\mathbf{v}-\mu\nabla \log\bar{\rho}_{\theta_n}(\mathbf{x}^{i,j},t_i)-\chi M \frac{1}{K}\sum_{k=1}^K \nabla \mathcal{K}_{\delta}(\mathbf{x}^{i,j},\mathbf{y}^{i,k}) \Big) \Big\Vert^2\\
			&-\frac{\beta}{J}\sum_{j'=1}^{J} 
			 \log \bar{\rho}_{\theta_n}(\mathbf{x}^{j'},(n-1)\Delta t), \ \mathbf{x}^{i,j}\in \mathcal{X}_{n_b}, \mathbf{y}^{i,k}\in \tilde{\mathcal{X}}_{n_b}, \mathbf{x}^{j'} \sim \bar{\rho}_{\theta^*_{n-1}}(\mathbf{x},(n-1)\Delta t), \nonumber
		\end{align*}
		and its gradient $\nabla_{\theta_n}\hat{\mathcal{L}}_{batch}(\theta_n)$.
		 \STATE Update the parameters $\theta_n$ using gradient-based optimization algorithms (e.g., Adam optimizer \cite{kingma2014adam} with learning rate $\eta$).
		 \ENDFOR
		 \ENDFOR
		 \STATE $\theta_n^*=\theta_n$ where $\theta_n$ includes the parameters of the time-dependent KRnet at the last epoch.
         \STATE Generate particles $\mathbf{x}_n^j=\Phi_{\theta_n^*}(z^j,n \Delta t ),\mathbf{z}^j\sim p_{\mathbf{z}}(\mathbf{z}),j=1,\dots,J_s$.
		 \ENDFOR
		\ENSURE Particles $\{\mathbf{x}_n^j\}_{j=1}^{J_s}$ at time $n\Delta t$
        and the approximate solution $\rho_{\theta_n^*}(\mathbf{x},t)=M\bar{\rho}_{\theta_n^*}(\mathbf{x},t), t\in\Big((n-1)\Delta t,n\Delta t\Big], n=1,2,\dots,N$.
	\end{algorithmic}
\end{algorithm}
\subsection{Compare with Eulerian methods}\label{Eulerian}
Our method is built on the Lagrangian framework of KS equations shown in \eqref{particle_NN}, and then we define a physics-informed Lagrangian loss \eqref{La} to obtain approximate KS solutions. Two Eulerian methods are given below.

PINNs \cite{raissi2019physics} minimize the following loss over a finite domain $\Omega=[-L,L]^d,\,L>0$,
\begin{align*}
    \mathcal{L}_{\text{PINN}}(\theta)=&\int_{0}^T\int_{\Omega} \Vert  \frac{\partial \bar{\rho}_{\theta}(\mathbf{x},t)}{\partial t}-\nabla \cdot(\bar{\rho}_{\theta} \mathbf{v})-\nabla\cdot(\mu \nabla \bar{\rho}_{\theta}+\chi M\bar{\rho}_{\theta} \nabla (\mathcal{K}*\bar{\rho}_{\theta})) \Vert^2  d\mathbf{x} dt\\
    &+\beta D_{KL}\left(\bar{\rho}^0(\mathbf{x})||\bar{\rho}_{\theta}(\mathbf{x},0)\right),
\end{align*}
which directly incorporates the PDE constraint \eqref{fks} and the initial PDF, and $\beta>0$.
However, it is challenging to determine an appropriate value for $L$ such that the finite domain can adequately capture the patterns of $\bar{\rho}$.

Another method, called Adaptive-PINNs, defines a loss function 
at particles $\mathbf{x}$ sampling from current probability density function $\bar{\rho}(\mathbf{x},t)$,
\begin{align*}
   &\mathcal{L}_{\text{Adaptive}}(\theta)\\
   =&\int_{0}^T\int_{\Omega} \Vert  \frac{\partial \bar{\rho}_{\theta}(\mathbf{x},t)}{\partial t}-\nabla \cdot(\bar{\rho}_{\theta} \mathbf{v})-\nabla\cdot(\mu \nabla \bar{\rho}_{\theta}+\chi M\bar{\rho}_{\theta} \nabla (\mathcal{K}*\bar{\rho}_{\theta})) \Vert^2 \bar{\rho}_{\theta}(\mathbf{x},t) d\mathbf{x} dt\\
   &+\beta D_{KL}\left(\bar{\rho}^0(\mathbf{x})||\bar{\rho}_{\theta}(\mathbf{x},0)\right),
\end{align*}
where $\beta>0$. Computing $\mathcal{L}_{\text{Adaptive}}(\theta)$ involves second-order derivatives that have high computational complexity and may introduce numerical instability, but calculating the physics-informed Lagrangian loss $\mathcal{L}(\theta)$ in \eqref{La} requires only first-order derivatives.

\section{Theoretical analysis}
In this section, we will prove that the KL divergence $\mathbf{KL}(\bar{\rho}_{\theta},\bar{\rho})$ can be controlled by physics-informed Lagrangian loss $\mathcal{L}(\theta)$.  Following \cite{shen2023entropy}, the modulated energy is defined as 
\begin{align*}
	F(\bar{\rho}_{\theta},\bar{\rho})=\frac{1}{2} \chi M\int_{\Omega^2} \mathcal{K}(\mathbf{x}-\mathbf{y})d(\bar{\rho}_{\theta}-\bar{\rho})(\mathbf{x})d(\bar{\rho}_{\theta}-\bar{\rho})(\mathbf{y}).
\end{align*}
Noting that for any two PDFs $\bar{\rho}_{\theta}$ and $\bar{\rho}$, $F(\bar{\rho}_{\theta},\bar{\rho})\geq 0$.
Combining the KL divergence and the above modulated energy defines modulated free energy as follows:
\begin{align*}
	E(\bar{\rho}_{\theta},\bar{\rho})=\mu \mathbf{KL}(\bar{\rho}_{\theta},\bar{\rho})+F(\bar{\rho}_{\theta},\bar{\rho}).
\end{align*}
We first prove the following three lemmas.
\begin{lemma}\label{lem1}
	Assume that $\bar{\rho}_{\theta}$ and $\bar{\rho}$ are the solutions to \eqref{fks_density} and \eqref{fks_approx}, respectively. The following equation holds,
	\begin{align}
\frac{d}{dt}\mathbf{KL}(\bar{\rho}_{\theta},\bar{\rho})=&-\mu \int_{\Omega}\bar{\rho}_{\theta}\Vert \nabla \log \frac{\bar{\rho}_{\theta}}{\bar{\rho}}\Vert^2-\chi M \int_{\Omega}\bar{\rho}_{\theta}\nabla \mathcal{K} * (\bar{\rho}_{\theta}-\bar{\rho})\cdot \nabla \log \frac{\bar{\rho}_{\theta}}{\bar{\rho}} \nonumber\\
&+\int_{\Omega} \bar{\rho}_{\theta} \epsilon(\mathbf{x},t)\cdot \nabla \log \frac{\bar{\rho}_{\theta}}{\bar{\rho}}. \label{lem1_eq}
	\end{align}
\end{lemma}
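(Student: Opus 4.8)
The plan is to differentiate the KL divergence $\mathbf{KL}(\bar\rho_\theta,\bar\rho)=\int_\Omega \bar\rho_\theta \log(\bar\rho_\theta/\bar\rho)$ under the integral sign and substitute the two evolution equations. First I would write
\[
\frac{d}{dt}\mathbf{KL}(\bar\rho_\theta,\bar\rho)=\int_\Omega \partial_t\bar\rho_\theta\,\Big(\log\tfrac{\bar\rho_\theta}{\bar\rho}+1\Big)-\int_\Omega \frac{\bar\rho_\theta}{\bar\rho}\,\partial_t\bar\rho,
\]
and note that the ``$+1$'' term drops because $\int_\Omega\partial_t\bar\rho_\theta=\frac{d}{dt}\int_\Omega\bar\rho_\theta=0$ by conservation of mass. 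For the evolution of $\bar\rho_\theta$ I would use the form \eqref{fksA_density}, i.e.\ $\partial_t\bar\rho_\theta=-\nabla\cdot\big(\bar\rho_\theta(\mathcal{A}[\bar\rho_\theta]+\epsilon)\big)$ with $\mathcal{A}[\bar\rho_\theta]=\mathbf{v}-\mu\nabla\log\bar\rho_\theta-\chi M\nabla(\mathcal{K}*\bar\rho_\theta)$, and for $\bar\rho$ the exact continuity form \eqref{ct_e}--\eqref{vv}, $\partial_t\bar\rho=-\nabla\cdot(\bar\rho\,\mathcal{A}[\bar\rho])$. Substituting and integrating by parts (all boundary terms vanish by the no-flux / decay-at-infinity assumptions already used for conservation of mass in Section 2) turns the first integral into $-\int_\Omega \nabla\log\tfrac{\bar\rho_\theta}{\bar\rho}\cdot\bar\rho_\theta(\mathcal{A}[\bar\rho_\theta]+\epsilon)$ and the second into $\int_\Omega \nabla\big(\tfrac{\bar\rho_\theta}{\bar\rho}\big)\cdot\bar\rho\,\mathcal{A}[\bar\rho]$.

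Next I would combine the two velocity-field contributions. The key algebraic identity is $\nabla\big(\tfrac{\bar\rho_\theta}{\bar\rho}\big)=\tfrac{\bar\rho_\theta}{\bar\rho}\nabla\log\tfrac{\bar\rho_\theta}{\bar\rho}$, so the second integral becomes $\int_\Omega \bar\rho_\theta\,\nabla\log\tfrac{\bar\rho_\theta}{\bar\rho}\cdot\mathcal{A}[\bar\rho]$. Adding, the $\mathcal{A}$-terms collapse to $-\int_\Omega \bar\rho_\theta\,\nabla\log\tfrac{\bar\rho_\theta}{\bar\rho}\cdot\big(\mathcal{A}[\bar\rho_\theta]-\mathcal{A}[\bar\rho]\big)$, while $\epsilon$ leaves the isolated term $-\int_\Omega\bar\rho_\theta\,\epsilon\cdot\nabla\log\tfrac{\bar\rho_\theta}{\bar\rho}$, which matches the last term of \eqref{lem1_eq} up to sign — so I should double-check the sign convention of $\epsilon$ in \eqref{fksA_density} and align it (the stated lemma has $+\int\bar\rho_\theta\epsilon\cdot\nabla\log(\bar\rho_\theta/\bar\rho)$, consistent with $\mathcal{A}+\epsilon$ being the true velocity of $\bar\rho_\theta$ and the overall minus sign from integration by parts — I would track this carefully). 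Finally I expand $\mathcal{A}[\bar\rho_\theta]-\mathcal{A}[\bar\rho]=-\mu\big(\nabla\log\bar\rho_\theta-\nabla\log\bar\rho\big)-\chi M\,\nabla\mathcal{K}*(\bar\rho_\theta-\bar\rho)=-\mu\nabla\log\tfrac{\bar\rho_\theta}{\bar\rho}-\chi M\,\nabla\mathcal{K}*(\bar\rho_\theta-\bar\rho)$; the $\mathbf{v}$ terms cancel exactly. Substituting produces precisely the diffusion term $-\mu\int_\Omega\bar\rho_\theta\|\nabla\log\tfrac{\bar\rho_\theta}{\bar\rho}\|^2$ and the interaction term $-\chi M\int_\Omega\bar\rho_\theta\,\nabla\mathcal{K}*(\bar\rho_\theta-\bar\rho)\cdot\nabla\log\tfrac{\bar\rho_\theta}{\bar\rho}$, completing the identity \eqref{lem1_eq}.

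The main obstacle I anticipate is purely bookkeeping rather than conceptual: keeping the signs straight through two integrations by parts and the $\mathcal{A}+\epsilon$ versus $\mathcal{A}$ splitting, and making sure the boundary terms genuinely vanish. The latter requires that $\bar\rho_\theta$ and $\bar\rho$ (and their fluxes) decay at infinity or satisfy the no-flux condition on $\partial\Omega$ — the same standing assumption invoked for conservation of mass — together with enough regularity (e.g.\ $\bar\rho_\theta,\bar\rho>0$ so that $\log(\bar\rho_\theta/\bar\rho)$ and its gradient are well defined) to justify differentiation under the integral sign; I would state these as the hypotheses of the lemma. A secondary subtlety is that the singular kernel $\mathcal{K}$ appears inside $\nabla\mathcal{K}*(\bar\rho_\theta-\bar\rho)$, so the manipulations implicitly assume enough integrability of the convolution; since the lemma is stated for smooth solutions this is not an issue, but it is worth a remark that in the numerical method $\mathcal{K}$ is replaced by the mollified $\mathcal{K}_\delta$.
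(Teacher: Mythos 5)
Your proposal follows essentially the same route as the paper: differentiate the KL divergence, drop the $\int_\Omega\partial_t\bar\rho_\theta$ term by mass conservation, substitute \eqref{fksA_density} and the exact continuity equation, integrate by parts, and use $\nabla(\bar\rho_\theta/\bar\rho)=(\bar\rho_\theta/\bar\rho)\nabla\log(\bar\rho_\theta/\bar\rho)$ before expanding $\mathcal{A}[\bar\rho_\theta]-\mathcal{A}[\bar\rho]$ (the paper merely splits the same computation into four pieces $I_1,\dots,I_4$ instead of grouping the velocities). The only flaw is the bookkeeping slip you yourself flag: after integration by parts the first integral should be $+\int_\Omega\bar\rho_\theta(\mathcal{A}[\bar\rho_\theta]+\epsilon)\cdot\nabla\log\frac{\bar\rho_\theta}{\bar\rho}$ and the second $-\int_\Omega\bar\rho_\theta\,\mathcal{A}[\bar\rho]\cdot\nabla\log\frac{\bar\rho_\theta}{\bar\rho}$, so the velocities collapse with a $+$ sign in front of $(\mathcal{A}[\bar\rho_\theta]-\mathcal{A}[\bar\rho])$ and the $\epsilon$ term comes out with the $+$ sign of \eqref{lem1_eq}; with that correction the stated identity follows exactly as in the paper.
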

\begin{proof}
Based on \eqref{fks_density}, \eqref{fksA_density} and $\int_{\Omega} \frac{\partial \bar{\rho}_{\theta}}{\partial t}=0$, we have
	\begin{align*}
		&\frac{d}{dt}\mathbf{KL}(\bar{\rho}_{\theta},\bar{\rho})=\frac{d}{dt}\int_{\Omega}\bar{\rho}_{\theta} \log \frac{\bar{\rho}_{\theta}}{\bar{\rho}}\\
		=&\int_{\Omega} \frac{\partial \bar{\rho}_{\theta}}{\partial t} \log \frac{\bar{\rho}_{\theta}}{\bar{\rho}}
		+\int_{\Omega} \frac{\partial \bar{\rho}_{\theta}}{\partial t}
		-\int_{\Omega} \frac{\bar{\rho}_{\theta}}{\bar{\rho}} \frac{\partial \bar{\rho}}{\partial t} \\
		=&\int_{\Omega} \frac{\partial \bar{\rho}_{\theta}}{\partial t} \log \frac{\bar{\rho}_{\theta}}{\bar{\rho}}
		-\int_{\Omega} \frac{\bar{\rho}_{\theta}}{\bar{\rho}} \frac{\partial \bar{\rho}}{\partial t} \\
	   =&-\int_{\Omega} \nabla \cdot \Big( \bar{\rho}_{\theta}(\mathbf{v}-\mu \nabla \log \bar{\rho}_{\theta}-\chi M \nabla \mathcal{K}*\bar{\rho}_{\theta}+\epsilon(\mathbf{x},t))\Big)\log \frac{\bar{\rho}_{\theta}}{\bar{\rho}} \\
	   &+\int_{\Omega} \frac{\bar{\rho}_{\theta}}{\bar{\rho}} \nabla \cdot \Big( \bar{\rho}(\mathbf{v}-\mu \nabla \log \bar{\rho}-\chi M \nabla \mathcal{K}*\bar{\rho})\Big) \\
       =&\underbrace{-\int_{\Omega} \nabla \cdot (\bar{\rho}_{\theta}\mathbf{v})\log \frac{\bar{\rho}_{\theta}}{\bar{\rho}}
	+\int_{\Omega} \frac{\bar{\rho}_{\theta}}{\bar{\rho}} \nabla \cdot (\bar{\rho}\mathbf{v})}_{I_1}
    +\underbrace{\int_{\Omega} \log \frac{\bar{\rho}_{\theta}}{\bar{\rho}} \nabla \cdot (\bar{\rho}_{\theta} \mu \nabla \log \bar{\rho}_{\theta})
-\int_{\Omega}  \frac{\bar{\rho}_{\theta}}{\bar{\rho}} \nabla \cdot (\bar{\rho} \mu \nabla \log \bar{\rho})}_{I_2}\\
&+\underbrace{\chi M \int_{\Omega} \nabla \cdot (\bar{\rho}_{\theta} \nabla \mathcal{K}*\bar{\rho}_{\theta} )\log  \frac{\bar{\rho}_{\theta}}{\bar{\rho}}
-\chi M\int_{\Omega}  \frac{\bar{\rho}_{\theta}}{\bar{\rho}}\nabla \cdot (\bar{\rho}\nabla \mathcal{K}*\bar{\rho})}_{I_3} \underbrace{-\int_{\Omega}  \nabla \cdot (\bar{\rho}_{\theta} \epsilon(\mathbf{x},t)) \log \frac{\bar{\rho}_{\theta}}{\bar{\rho}}}_{I_4},
	\end{align*}
where $I_1, I_2, I_3$ and $I_4$ are computed as follows.

$I_1$ is first computed through the divergence theorem
\begin{align*}
	I_1=&-\int_{\Omega} \nabla \cdot (\bar{\rho}_{\theta}\mathbf{v})\log \frac{\bar{\rho}_{\theta}}{\bar{\rho}}
	+\int_{\Omega} \frac{\bar{\rho}_{\theta}}{\bar{\rho}} \nabla \cdot (\bar{\rho}\mathbf{v})\\
=&\int_{\Omega} \bar{\rho}_{\theta}\mathbf{v} \cdot \nabla \log \frac{\bar{\rho}_{\theta}}{\bar{\rho}}
-\int_{\Omega} \nabla \frac{\bar{\rho}_{\theta}}{\bar{\rho}} \cdot \bar{\rho}\mathbf{v} \\
=&0.
\end{align*}
$I_2$ is also obtained through the divergence theorem
\begin{align*}
	I_2=&\int_{\Omega} \log \frac{\bar{\rho}_{\theta}}{\bar{\rho}} \nabla \cdot (\bar{\rho}_{\theta} \mu \nabla \log \bar{\rho}_{\theta})
-\int_{\Omega}  \frac{\bar{\rho}_{\theta}}{\bar{\rho}} \nabla \cdot (\bar{\rho} \mu \nabla \log \bar{\rho})\\
=&-\mu \int_{\Omega} \bar{\rho}_{\theta} \nabla \log \bar{\rho}_{\theta} \cdot \nabla \log \frac{\bar{\rho}_{\theta}}{\bar{\rho}}
+\mu \int_{\Omega}  \bar{\rho} \nabla \log \bar{\rho}\cdot \nabla  \frac{\bar{\rho}_{\theta}}{\bar{\rho}}\\
=&-\mu \int_{\Omega} \bar{\rho}_{\theta} \Vert \nabla  \log  \frac{\bar{\rho}_{\theta}}{\bar{\rho}} \Vert^2.
\end{align*}
In the following, $I_3$ is defined and calculated by applying the divergence theorem
\begin{align*}
	I_3=&\chi M \int_{\Omega} \nabla \cdot (\bar{\rho}_{\theta} \nabla \mathcal{K}*\bar{\rho}_{\theta} )\log  \frac{\bar{\rho}_{\theta}}{\bar{\rho}}
-\chi M\int_{\Omega}  \frac{\bar{\rho}_{\theta}}{\bar{\rho}}\nabla \cdot (\bar{\rho}\nabla \mathcal{K}*\bar{\rho})\\
=&-\chi M \int_{\Omega} (\bar{\rho}_{\theta} \nabla \mathcal{K}*\bar{\rho}_{\theta} ) \cdot \nabla \log  \frac{\bar{\rho}_{\theta}}{\bar{\rho}}
+\chi M\int_{\Omega} \nabla \frac{\bar{\rho}_{\theta}}{\bar{\rho}} \cdot (\bar{\rho}\nabla \mathcal{K}*\bar{\rho})\\
=& -\chi M \int_{\Omega} \bar{\rho}_{\theta} \nabla \mathcal{K}*(\bar{\rho}_{\theta}-\bar{\rho}) \cdot \nabla \log  \frac{\bar{\rho}_{\theta}}{\bar{\rho}}.
\end{align*}
Finally, we compute $I_4$
\begin{align*}
	I_4=&-\int_{\Omega}  \nabla \cdot (\bar{\rho}_{\theta} \epsilon(\mathbf{x},t)) \log \frac{\bar{\rho}_{\theta}}{\bar{\rho}}\\
	=& \int_{\Omega} (\bar{\rho}_{\theta} \epsilon(\mathbf{x},t)) \cdot \nabla \log \frac{\bar{\rho}_{\theta}}{\bar{\rho}}.
\end{align*}
Therefore, adding $I_1, I_2, I_3, I_4$ together derives \eqref{lem1_eq}. 
\end{proof}

\begin{lemma}
	Given the same assumptions as in \Cref{lem1}, the following equation holds:
	\begin{align*}
		\frac{d}{dt}F(\bar{\rho}_{\theta},\bar{\rho})=&-\chi^2 M^2\int_{\Omega} \bar{\rho}_{\theta} \Vert \nabla \mathcal{K}*(\bar{\rho}_{\theta}-\bar{\rho})\Vert^2
	+\chi M\int_{\Omega}\bar{\rho}_{\theta}\epsilon(\mathbf{x},t)\cdot \nabla\mathcal{K}*(\bar{\rho}_{\theta}-\bar{\rho})\\
	&-\mu\chi M \int_{\Omega}\bar{\rho}_{\theta}\nabla\mathcal{K}*(\bar{\rho}_{\theta}-\bar{\rho})\cdot \nabla \log \frac{\bar{\rho}_{\theta}}{\bar{\rho}}\\
&+\chi M\frac{1}{2}\int_{\Omega^2}\nabla \mathcal{K}(\mathbf{x}-\mathbf{y})\cdot(\mathcal{A}[\bar{\rho}](\mathbf{x})-\mathcal{A}[\bar{\rho}](\mathbf{y}))d (\bar{\rho}_{\theta}-\bar{\rho})^{\otimes 2}(\mathbf{x},\mathbf{y}).
\end{align*}
\end{lemma}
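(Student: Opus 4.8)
The plan is to differentiate the modulated energy $F(\bar{\rho}_{\theta},\bar{\rho})$ directly in time, substitute the two evolution equations, integrate by parts once in the kernel variable, and then reorganize the resulting bilinear form so that three of the four claimed terms fall out immediately while the last is produced by a symmetrization of the kernel. Throughout write $\sigma := \bar{\rho}_{\theta}-\bar{\rho}$, so that $F = \frac{1}{2}\chi M\int_{\Omega^2}\mathcal{K}(\mathbf{x}-\mathbf{y})\sigma(\mathbf{x},t)\sigma(\mathbf{y},t)\,d\mathbf{x}\,d\mathbf{y}$. Since $\mathcal{K}$ is even, the two contributions coming from $\partial_t\sigma(\mathbf{x},t)$ and $\partial_t\sigma(\mathbf{y},t)$ coincide after relabeling $\mathbf{x}\leftrightarrow\mathbf{y}$, so $\frac{d}{dt}F = \chi M\int_{\Omega^2}\mathcal{K}(\mathbf{x}-\mathbf{y})\,\partial_t\sigma(\mathbf{x},t)\,\sigma(\mathbf{y},t)\,d\mathbf{x}\,d\mathbf{y}$.

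Next I would insert $\partial_t\bar{\rho}_{\theta} = -\nabla\cdot\big(\bar{\rho}_{\theta}(\mathcal{A}[\bar{\rho}_{\theta}]+\epsilon)\big)$ from \eqref{fksA_density} and $\partial_t\bar{\rho} = -\nabla\cdot\big(\bar{\rho}\,\mathcal{A}[\bar{\rho}]\big)$ from \eqref{fks_density} (in the form \eqref{ct_e}--\eqref{vv}), so that $\partial_t\sigma = -\nabla\cdot W$ with $W := \bar{\rho}_{\theta}(\mathcal{A}[\bar{\rho}_{\theta}]+\epsilon) - \bar{\rho}\,\mathcal{A}[\bar{\rho}]$. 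Integrating by parts in $\mathbf{x}$ — the boundary terms vanish because $\bar{\rho}_{\theta},\bar{\rho}$ decay at infinity / on $\partial\Omega$, exactly as in the mass-conservation computation in Section 2 — and recognizing $\int_{\Omega}\nabla_{\mathbf{x}}\mathcal{K}(\mathbf{x}-\mathbf{y})\sigma(\mathbf{y},t)\,d\mathbf{y} = \nabla(\mathcal{K}*\sigma)(\mathbf{x},t) = \nabla\mathcal{K}*(\bar{\rho}_{\theta}-\bar{\rho})(\mathbf{x},t)$, one arrives at $\frac{d}{dt}F = \chi M\int_{\Omega} W\cdot\nabla\mathcal{K}*(\bar{\rho}_{\theta}-\bar{\rho})\,d\mathbf{x}$.

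The third step is purely algebraic: write $W = \bar{\rho}_{\theta}\big(\mathcal{A}[\bar{\rho}_{\theta}]-\mathcal{A}[\bar{\rho}]\big) + \sigma\,\mathcal{A}[\bar{\rho}] + \bar{\rho}_{\theta}\epsilon$, and use the definition \eqref{vv} of $\mathcal{A}$ to get $\mathcal{A}[\bar{\rho}_{\theta}]-\mathcal{A}[\bar{\rho}] = -\mu\nabla\log\frac{\bar{\rho}_{\theta}}{\bar{\rho}} - \chi M\,\nabla\mathcal{K}*(\bar{\rho}_{\theta}-\bar{\rho})$. Substituting, the $\bar{\rho}_{\theta}$-part of $W$ contributes precisely $-\chi^2 M^2\int_{\Omega}\bar{\rho}_{\theta}\Vert\nabla\mathcal{K}*(\bar{\rho}_{\theta}-\bar{\rho})\Vert^2$ and $-\mu\chi M\int_{\Omega}\bar{\rho}_{\theta}\,\nabla\mathcal{K}*(\bar{\rho}_{\theta}-\bar{\rho})\cdot\nabla\log\frac{\bar{\rho}_{\theta}}{\bar{\rho}}$, while the term $\bar{\rho}_{\theta}\epsilon$ gives $\chi M\int_{\Omega}\bar{\rho}_{\theta}\,\epsilon(\mathbf{x},t)\cdot\nabla\mathcal{K}*(\bar{\rho}_{\theta}-\bar{\rho})$; these are exactly the first three terms of the claim. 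For the last piece, $\chi M\int_{\Omega}\sigma\,\mathcal{A}[\bar{\rho}]\cdot\nabla\mathcal{K}*\sigma = \chi M\int_{\Omega^2}\sigma(\mathbf{x})\sigma(\mathbf{y})\,\mathcal{A}[\bar{\rho}](\mathbf{x})\cdot\nabla\mathcal{K}(\mathbf{x}-\mathbf{y})\,d\mathbf{x}\,d\mathbf{y}$, I would symmetrize: relabeling $\mathbf{x}\leftrightarrow\mathbf{y}$ and using that $\nabla\mathcal{K}$ is odd turns this into $-\chi M\int_{\Omega^2}\sigma(\mathbf{x})\sigma(\mathbf{y})\,\mathcal{A}[\bar{\rho}](\mathbf{y})\cdot\nabla\mathcal{K}(\mathbf{x}-\mathbf{y})\,d\mathbf{x}\,d\mathbf{y}$, and averaging the two representations yields $\frac{1}{2}\chi M\int_{\Omega^2}\nabla\mathcal{K}(\mathbf{x}-\mathbf{y})\cdot\big(\mathcal{A}[\bar{\rho}](\mathbf{x})-\mathcal{A}[\bar{\rho}](\mathbf{y})\big)\,d(\bar{\rho}_{\theta}-\bar{\rho})^{\otimes 2}(\mathbf{x},\mathbf{y})$, the fourth term. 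Adding the four pieces gives the identity.

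I expect the main difficulty to lie not in the bookkeeping but in justifying the analytic steps in the presence of the singular kernel $\mathcal{K}$ (logarithmic in $2$D, $|\mathbf{x}-\mathbf{y}|^{-1}$ in $3$D): differentiating $F$ under the integral sign and the integration by parts both require enough regularity and decay on $\bar{\rho}_{\theta},\bar{\rho}$ (or, as in the numerical method, the mollified kernel $\mathcal{K}_{\delta}$) to make $F$, $\nabla\mathcal{K}*\sigma$, and the double integrals well defined and to kill the near-diagonal and boundary contributions. One also has to track the convolution and oddness conventions, $\nabla_{\mathbf{x}}\mathcal{K}(\mathbf{x}-\mathbf{y}) = -\nabla_{\mathbf{y}}\mathcal{K}(\mathbf{x}-\mathbf{y}) = -\nabla\mathcal{K}(\mathbf{y}-\mathbf{x})$, consistently with \Cref{lem1}.
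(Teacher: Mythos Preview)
Your proposal is correct and uses the same ingredients as the paper (evenness of $\mathcal{K}$, substitution of the two continuity equations, one integration by parts, and a symmetrization), but the decomposition is organized differently. The paper splits the flux according to the four physical components of the velocity and treats them as separate pieces $J_1,J_2,J_3,J_4$ (advection $\mathbf{v}$, diffusion $\mu\nabla\log$, interaction $\chi M\nabla\mathcal{K}*$, residual $\epsilon$), performing an integration by parts and a symmetrization within each of $J_1,J_2,J_3$ individually; the three symmetrized remainders then reassemble into the single $\mathcal{A}[\bar{\rho}]$ commutator term. You instead integrate by parts once on the entire flux $W$ and then decompose algebraically as $W=\bar{\rho}_{\theta}\big(\mathcal{A}[\bar{\rho}_{\theta}]-\mathcal{A}[\bar{\rho}]\big)+\sigma\,\mathcal{A}[\bar{\rho}]+\bar{\rho}_{\theta}\epsilon$, so that the first three claimed terms drop out immediately from $\mathcal{A}[\bar{\rho}_{\theta}]-\mathcal{A}[\bar{\rho}]=-\mu\nabla\log\frac{\bar{\rho}_{\theta}}{\bar{\rho}}-\chi M\nabla\mathcal{K}*\sigma$ and only the single piece $\sigma\,\mathcal{A}[\bar{\rho}]$ needs symmetrizing. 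Your packaging is more economical (one integration by parts, one symmetrization) and makes the role of the operator difference $\mathcal{A}[\bar{\rho}_{\theta}]-\mathcal{A}[\bar{\rho}]$ transparent; the paper's term-by-term treatment is longer but shows explicitly how each physical mechanism contributes. The analytic caveats you flag about the singular kernel are apt and apply equally to the paper's version.
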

\begin{proof}
We first use that $\mathcal{K}$ is an even function and apply \eqref{fks_density}, \eqref{fksA_density} to see that
	\begin{align*}
		&\frac{d}{dt}F(\bar{\rho}_{\theta},\bar{\rho})=\frac{1}{2}\chi M\frac{d}{dt}\int_{\Omega^2} \mathcal{K}(\mathbf{x}-\mathbf{y})d (\bar{\rho}_{\theta}-\bar{\rho})^{\otimes 2}(\mathbf{x},\mathbf{y})\\
=& \chi M \int_{\Omega} \mathcal{K} *  (\bar{\rho}_{\theta}-\bar{\rho})(\mathbf{x}) (\frac{\partial\bar{\rho}_{\theta}}{\partial t}-\frac{\partial\bar{\rho}}{\partial t})(\mathbf{x}) d\mathbf{x}\\
=& \chi M \int_{\Omega} \mathcal{K} *  (\bar{\rho}_{\theta}-\bar{\rho})(\mathbf{x}) \nabla \cdot \Big( \bar{\rho}_{\theta}\left(-\mathbf{v}+\mu \nabla \log \bar{\rho}_{\theta}+\chi M \nabla \mathcal{K}*\bar{\rho}_{\theta}-\epsilon(\mathbf{x},t)\right)\\
&-\bar{\rho}\left(-\mathbf{v}+\mu\nabla \log \bar{\rho}+\chi M \nabla \mathcal{K}*\bar{\rho}\right)\Big)d\mathbf{x}\\
=&\underbrace{-\chi M   \int_{\Omega} \mathcal{K} *  (\bar{\rho}_{\theta}-\bar{\rho})(\mathbf{x})
	\nabla \cdot (\bar{\rho}_{\theta}-\bar{\rho})\mathbf{v}}_{J_1}\\
    &+\underbrace{\chi M \mu \int_{\Omega} \mathcal{K}*(\bar{\rho}_{\theta}-\bar{\rho})(\mathbf{x})\nabla \cdot (\bar{\rho}_{\theta}\nabla \log \bar{\rho}_{\theta})
-\chi M \mu \int_{\Omega} \mathcal{K}*(\bar{\rho}_{\theta}-\bar{\rho})(\mathbf{x})\nabla \cdot (\bar{\rho}\nabla \log \bar{\rho})}_{J_2}\\
&+\underbrace{\chi^2 M^2 \int_{\Omega} \mathcal{K}*(\bar{\rho}_{\theta}-\bar{\rho})\nabla \cdot(\bar{\rho}_{\theta}\nabla \mathcal{K}*\bar{\rho}_{\theta}-\bar{\rho}\nabla \mathcal{K}*\bar{\rho})}_{J_3}
\underbrace{-\chi M \int_{\Omega} \mathcal{K}*(\bar{\rho}_{\theta}-\bar{\rho})\nabla \cdot (\bar{\rho}_{\theta}\epsilon(\mathbf{x},t))}_{J_4}.
\end{align*}
Here, $J_1, J_2, J_3$ and $J_4$ are calculated as follows.
$J_1$ means that 
\begin{align*}
	J_1=&-\chi M   \int_{\Omega} \mathcal{K} *  (\bar{\rho}_{\theta}-\bar{\rho})(\mathbf{x})
	\nabla \cdot (\bar{\rho}_{\theta}-\bar{\rho})\mathbf{v}\\
=&\chi M   \int_{\Omega} \nabla \mathcal{K} *  (\bar{\rho}_{\theta}-\bar{\rho})(\mathbf{x}) 
\cdot (\bar{\rho}_{\theta}-\bar{\rho})\mathbf{v}\\
=&\chi M \frac{1}{2}  \int_{\Omega} \nabla\mathcal{K}(\mathbf{x}-\mathbf{y})\cdot (\mathbf{v}(\mathbf{x})-\mathbf{v}(\mathbf{y}))d(\bar{\rho}_{\theta}-\bar{\rho})^{\otimes 2}(\mathbf{x},\mathbf{y}),
\end{align*}
where the divergence theorem is used for deriving the second equation, and the last equation is obtained by doing the symmetrization, i.e., swapping $x$ and $y$ in the second equation to obtain another equation, using the property of the even function $\mathcal{K}$ for another equation, and then taking the average of these two equations.
$J_2$ reads
\begin{align*}
	&J_2=\chi M \mu \int_{\Omega} \mathcal{K}*(\bar{\rho}_{\theta}-\bar{\rho})(\mathbf{x})\nabla \cdot (\bar{\rho}_{\theta}\nabla \log \bar{\rho}_{\theta})
-\chi M \mu \int_{\Omega} \mathcal{K}*(\bar{\rho}_{\theta}-\bar{\rho})(\mathbf{x})\nabla \cdot (\bar{\rho}\nabla \log \bar{\rho})\\
=&-\chi M \mu \int_{\Omega} \nabla\mathcal{K}*(\bar{\rho}_{\theta}-\bar{\rho})\cdot (\bar{\rho}_{\theta}\nabla \log \bar{\rho}_{\theta})
+\chi M \mu \int_{\Omega} \nabla\mathcal{K}*(\bar{\rho}_{\theta}-\bar{\rho})\cdot (\bar{\rho}\nabla \log \bar{\rho})\\
=&-\chi M \mu \int_{\Omega} \nabla\mathcal{K}*(\bar{\rho}_{\theta}-\bar{\rho})\cdot (\bar{\rho}_{\theta}\nabla \log \frac{\bar{\rho}_{\theta}}{\bar{\rho}})
-\chi M \mu \int_{\Omega} \nabla\mathcal{K}*(\bar{\rho}_{\theta}-\bar{\rho})\cdot ( (\bar{\rho}_{\theta}-\bar{\rho}) \nabla\log \bar{\rho})\\
=&-\chi M \mu \int_{\Omega} \bar{\rho}_{\theta}\nabla\mathcal{K}*(\bar{\rho}_{\theta}-\bar{\rho})\cdot \nabla \log \frac{\bar{\rho}_{\theta}}{\bar{\rho}}\\
&-\frac{\chi M \mu }{2} \int_{\Omega} \nabla \mathcal{K}(\mathbf{x}-\mathbf{y}) \cdot (\nabla \log \bar{\rho}(\mathbf{x})-\nabla \log \bar{\rho}(\mathbf{y}))d(\bar{\rho}_{\theta}-\bar{\rho})^{\otimes 2}(\mathbf{x},\mathbf{y}),
\end{align*}
where the second equation and the last equation are derived by the divergence theorem and doing symmetrization, respectively. $J_3$ is given by 
\begin{align*}
	J_3=&\chi^2 M^2 \int_{\Omega} \mathcal{K}*(\bar{\rho}_{\theta}-\bar{\rho})\nabla \cdot(\bar{\rho}_{\theta}\nabla \mathcal{K}*\bar{\rho}_{\theta}-\bar{\rho}\nabla \mathcal{K}*\bar{\rho})\\
	=&-\chi^2 M^2 \int_{\Omega}  \nabla \mathcal{K}*(\bar{\rho}_{\theta}-\bar{\rho}) \cdot (\bar{\rho}_{\theta}\nabla \mathcal{K}*\bar{\rho}_{\theta}-\bar{\rho}\nabla \mathcal{K}*\bar{\rho})\\
	=&-\chi^2 M^2 \int_{\Omega} \nabla \mathcal{K}*(\bar{\rho}_{\theta}-\bar{\rho}) \cdot (\bar{\rho}_{\theta}\nabla \mathcal{K}*(\bar{\rho}_{\theta}-\bar{\rho}))\\
	&-\chi^2 M^2 \int_{\Omega}  \nabla \mathcal{K}*(\bar{\rho}_{\theta}-\bar{\rho}) \cdot (\bar{\rho}_{\theta}-\bar{\rho})\nabla\mathcal{K}*\bar{\rho}\\
	=&-\chi^2 M^2 \int_{\Omega}\bar{\rho}_{\theta}\Vert \nabla \mathcal{K}*(\bar{\rho}_{\theta}-\bar{\rho}) \Vert^2\\
	&-\frac{\chi^2 M^2 }{2}\int_{\Omega}\nabla \mathcal{K}(\mathbf{x}-\mathbf{y})(\nabla \mathcal{K}*\bar{\rho}(\mathbf{x})-\nabla \mathcal{K}*\bar{\rho}(\mathbf{y}))d(\bar{\rho}_{\theta}-\bar{\rho})^{\otimes 2}(\mathbf{x},\mathbf{y}).
\end{align*}
Here, the second equation and the last equation rely on the divergence theorem and the symmetrization, respectively. $J_4$ reads
\begin{align*}
	J_4=&-\chi M \int_{\Omega} \mathcal{K}*(\bar{\rho}_{\theta}-\bar{\rho})\nabla \cdot (\bar{\rho}_{\theta}\epsilon(\mathbf{x},t))\\
	=&\chi M \int_{\Omega}\bar{\rho}_{\theta}\epsilon(\mathbf{x},t)\cdot \nabla\mathcal{K}*(\bar{\rho}_{\theta}-\bar{\rho}),
\end{align*}
according to the divergence theorem. We add up $J_1, J_2, J_3$ and $J_4$ to complete the proof.
\end{proof}

\begin{lemma}\label{lem3}
	Given the same assumptions as in \Cref{lem1}, the following equation holds:
	\begin{align*}
		\frac{d}{dt}E(\bar{\rho}_{\theta},\bar{\rho})=&
		-\int_{\Omega}\bar{\rho}_{\theta}\Vert \chi M \nabla \mathcal{K}*(\bar{\rho}_{\theta}-\bar{\rho})+\mu \nabla \log \frac{\bar{\rho}_{\theta}}{\bar{\rho}}\Vert^2\\
	&+\int_{\Omega} \bar{\rho}_{\theta} \epsilon(\mathbf{x},t)\cdot  \Big(\chi M \nabla\mathcal{K}*(\bar{\rho}_{\theta}-\bar{\rho})+\mu \nabla \log \frac{\bar{\rho}_{\theta}}{\bar{\rho}}\Big)\\
	&+\chi M\frac{1}{2}\int_{\Omega^2}\nabla \mathcal{K}(\mathbf{x}-\mathbf{y})\cdot(\mathcal{A}[\bar{\rho}](\mathbf{x})-\mathcal{A}[\bar{\rho}](\mathbf{y}))d (\bar{\rho}_{\theta}-\bar{\rho})^{\otimes 2}(\mathbf{x},\mathbf{y}).
	\end{align*}
\end{lemma}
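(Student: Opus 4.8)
The plan is simply to combine the two preceding lemmas, since by definition $E(\bar{\rho}_{\theta},\bar{\rho})=\mu\,\mathbf{KL}(\bar{\rho}_{\theta},\bar{\rho})+F(\bar{\rho}_{\theta},\bar{\rho})$, so that $\frac{d}{dt}E=\mu\frac{d}{dt}\mathbf{KL}+\frac{d}{dt}F$. First I would multiply the identity in \Cref{lem1} through by $\mu$, producing a quadratic term $-\mu^2\int_{\Omega}\bar{\rho}_{\theta}\Vert\nabla\log\tfrac{\bar{\rho}_{\theta}}{\bar{\rho}}\Vert^2$, a cross term $-\mu\chi M\int_{\Omega}\bar{\rho}_{\theta}\,\nabla\mathcal{K}*(\bar{\rho}_{\theta}-\bar{\rho})\cdot\nabla\log\tfrac{\bar{\rho}_{\theta}}{\bar{\rho}}$, and an error term $\mu\int_{\Omega}\bar{\rho}_{\theta}\,\epsilon(\mathbf{x},t)\cdot\nabla\log\tfrac{\bar{\rho}_{\theta}}{\bar{\rho}}$.

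Next I would add the identity from the second lemma verbatim. Its first term $-\chi^2M^2\int_{\Omega}\bar{\rho}_{\theta}\Vert\nabla\mathcal{K}*(\bar{\rho}_{\theta}-\bar{\rho})\Vert^2$ and its third term $-\mu\chi M\int_{\Omega}\bar{\rho}_{\theta}\,\nabla\mathcal{K}*(\bar{\rho}_{\theta}-\bar{\rho})\cdot\nabla\log\tfrac{\bar{\rho}_{\theta}}{\bar{\rho}}$ combine with the two terms already extracted from $\mu\frac{d}{dt}\mathbf{KL}$: together the three quadratic pieces $-\mu^2\Vert\nabla\log\tfrac{\bar{\rho}_{\theta}}{\bar{\rho}}\Vert^2$, $-2\mu\chi M\,\nabla\mathcal{K}*(\bar{\rho}_{\theta}-\bar{\rho})\cdot\nabla\log\tfrac{\bar{\rho}_{\theta}}{\bar{\rho}}$ and $-\chi^2M^2\Vert\nabla\mathcal{K}*(\bar{\rho}_{\theta}-\bar{\rho})\Vert^2$ are exactly $-\bigl\Vert\chi M\nabla\mathcal{K}*(\bar{\rho}_{\theta}-\bar{\rho})+\mu\nabla\log\tfrac{\bar{\rho}_{\theta}}{\bar{\rho}}\bigr\Vert^2$ once the perfect square is recognized, which is the first line of the claimed identity. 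Likewise, the error term $\mu\int_{\Omega}\bar{\rho}_{\theta}\,\epsilon\cdot\nabla\log\tfrac{\bar{\rho}_{\theta}}{\bar{\rho}}$ and the second lemma's error term $\chi M\int_{\Omega}\bar{\rho}_{\theta}\,\epsilon\cdot\nabla\mathcal{K}*(\bar{\rho}_{\theta}-\bar{\rho})$ merge into $\int_{\Omega}\bar{\rho}_{\theta}\,\epsilon\cdot\bigl(\chi M\nabla\mathcal{K}*(\bar{\rho}_{\theta}-\bar{\rho})+\mu\nabla\log\tfrac{\bar{\rho}_{\theta}}{\bar{\rho}}\bigr)$, the second line. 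Finally, the remaining double-integral term of the second lemma involving $\mathcal{A}[\bar{\rho}]$ is carried over unchanged as the third line, completing the proof.

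Since this is a pure algebraic recombination of two already-proved identities, there is no genuine analytic obstacle; the only point requiring care is the bookkeeping of the two cross terms — one arising from scaling \Cref{lem1} by $\mu$ and one already present in the second lemma — which must add up to the coefficient $2\mu\chi M$ needed for the perfect square, together with keeping the signs consistent when factoring $-\bigl\Vert\chi M\,\cdot\,+\mu\,\cdot\,\bigr\Vert^2$. I would also recall at the end that $E(\bar{\rho}_{\theta},\bar{\rho})\ge 0$, since $\mathbf{KL}\ge 0$ and $F\ge 0$ as noted above; this nonnegativity, combined with the identity just derived, is what will subsequently allow the physics-informed Lagrangian loss $\mathcal{L}(\theta)$ to control $\mathbf{KL}(\bar{\rho}_{\theta},\bar{\rho})$.
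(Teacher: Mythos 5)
Your proposal is correct and is exactly the paper's (implicit) argument: the paper states this lemma without proof precisely because it follows from taking $\mu$ times the identity of \Cref{lem1} plus the identity of the second lemma, recognizing the perfect square $-\Vert \chi M \nabla\mathcal{K}*(\bar{\rho}_{\theta}-\bar{\rho})+\mu\nabla\log\tfrac{\bar{\rho}_{\theta}}{\bar{\rho}}\Vert^2$ from the three quadratic pieces, and collecting the two $\epsilon$-terms. Your bookkeeping of the cross terms and signs is accurate, so there is nothing to add.
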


Based on the above analysis, we can prove the following main result. 
\begin{theorem}\label{therorem 4}
	Assume that  $\bar{\rho}_{\theta}$ satisfies the initial PDF $\bar{\rho}^0(\mathbf{x})$. Furthermore, suppose that for $t\in [0,T]$, the underlying velocity $\mathcal{A}[\bar{\rho}](\mathbf{x})$
	is Lipschitz in $x$ and $\sup_{t\in [0,T]}\Vert \nabla \mathcal{A}[\bar{\rho}](\cdot)\Vert_{L^{\infty}}=C_1 < \infty$. Then 
	there exists $C>0$ such that
	\begin{align*}
		\sup_{t\in [0,T]} \mu  \mathbf{KL}(\bar{\rho}_{\theta},\bar{\rho})\leq \sup_{t\in [0,T]} E(\bar{\rho}_{\theta},\bar{\rho})\leq \frac{1}{4}\exp(CC_1\chi M T)\mathcal{L}(\theta).
	\end{align*}
\end{theorem}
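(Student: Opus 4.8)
The plan is to close a Gr\"onwall estimate on the modulated free energy $E(\bar\rho_\theta,\bar\rho)=\mu\,\mathbf{KL}(\bar\rho_\theta,\bar\rho)+F(\bar\rho_\theta,\bar\rho)$, starting from the exact identity of \Cref{lem3} and using that, by hypothesis, $\bar\rho_\theta$ and $\bar\rho$ share the initial datum $\bar\rho^0$. Write $g(\mathbf{x},t):=\chi M\,\nabla\mathcal{K}*(\bar\rho_\theta-\bar\rho)+\mu\,\nabla\log\frac{\bar\rho_\theta}{\bar\rho}$ and $m(t):=\int_\Omega\|\epsilon(\mathbf{x},t)\|^2\bar\rho_\theta(\mathbf{x},t)\,d\mathbf{x}$, so that $\mathcal{L}(\theta)=\int_0^T m(t)\,dt$ by \eqref{La}. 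With this notation \Cref{lem3} reads
\[
\frac{d}{dt}E(\bar\rho_\theta,\bar\rho)=-\int_\Omega\bar\rho_\theta\|g\|^2+\int_\Omega\bar\rho_\theta\,\epsilon\cdot g+\frac{\chi M}{2}\int_{\Omega^2}\nabla\mathcal{K}(\mathbf{x}-\mathbf{y})\cdot\big(\mathcal{A}[\bar\rho](\mathbf{x})-\mathcal{A}[\bar\rho](\mathbf{y})\big)\,d(\bar\rho_\theta-\bar\rho)^{\otimes2}(\mathbf{x},\mathbf{y}).
\]

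For the first two terms I would complete the square exactly, $-\int_\Omega\bar\rho_\theta\|g\|^2+\int_\Omega\bar\rho_\theta\,\epsilon\cdot g=-\int_\Omega\bar\rho_\theta\|g-\tfrac12\epsilon\|^2+\tfrac14\int_\Omega\bar\rho_\theta\|\epsilon\|^2\le\tfrac14 m(t)$, which is where the prefactor $\tfrac14$ in the claimed bound originates and which also shows the error-free dynamics is dissipative. For the remaining double integral $R(t)$ the Lipschitz hypothesis enters: since $|\mathcal{A}[\bar\rho](\mathbf{x})-\mathcal{A}[\bar\rho](\mathbf{y})|\le C_1\|\mathbf{x}-\mathbf{y}\|$ while the Green kernel satisfies $\|\nabla\mathcal{K}(\mathbf{z})\|\le C_d\|\mathbf{z}\|^{-1}$ (for $d=2$), the integrand $\nabla\mathcal{K}(\mathbf{x}-\mathbf{y})\cdot(\mathcal{A}[\bar\rho](\mathbf{x})-\mathcal{A}[\bar\rho](\mathbf{y}))$ is bounded by $C_dC_1$ — the singularity of $\nabla\mathcal{K}$ is exactly absorbed by the modulus of continuity of $\mathcal{A}[\bar\rho]$ — hence $|R(t)|\le\tfrac12\chi M\,C_dC_1\,\|\bar\rho_\theta-\bar\rho\|_{L^1(\Omega)}^2$. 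Pinsker's inequality then gives $\|\bar\rho_\theta-\bar\rho\|_{L^1}^2\le2\,\mathbf{KL}(\bar\rho_\theta,\bar\rho)\le\tfrac2\mu E(\bar\rho_\theta,\bar\rho)$ (using $F\ge0$), so $|R(t)|\le a\,E(\bar\rho_\theta,\bar\rho)$ with $a:=C_dC_1\chi M/\mu$. Collecting the estimates, $\frac{d}{dt}E(\bar\rho_\theta,\bar\rho)\le\tfrac14 m(t)+a\,E(\bar\rho_\theta,\bar\rho)$.

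Since the hypothesis $\bar\rho_\theta(\cdot,0)=\bar\rho^0=\bar\rho(\cdot,0)$ makes both $\mathbf{KL}(\bar\rho_\theta,\bar\rho)$ and $F(\bar\rho_\theta,\bar\rho)$ vanish at $t=0$, we have $E(\bar\rho_\theta,\bar\rho)(0)=0$, and Gr\"onwall's lemma yields $E(\bar\rho_\theta,\bar\rho)(t)\le\tfrac14\int_0^t e^{a(t-s)}m(s)\,ds\le\tfrac14 e^{aT}\int_0^T m(s)\,ds=\tfrac14 e^{aT}\mathcal{L}(\theta)$ for every $t\in[0,T]$. Absorbing $C_d$ and $1/\mu$ into a single constant $C$ turns $aT$ into $CC_1\chi M T$, and since $\mu\,\mathbf{KL}(\bar\rho_\theta,\bar\rho)\le E(\bar\rho_\theta,\bar\rho)$ (again by $F\ge0$), taking $\sup_{t\in[0,T]}$ completes the argument.

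The step I expect to be the main obstacle is the estimate of the double-integral remainder $R(t)$. The "bounded integrand" shortcut above is borderline-sharp in $d=2$, where $\|\nabla\mathcal{K}(\mathbf{z})\|\sim\|\mathbf{z}\|^{-1}$; in $d=3$ one has $\|\nabla\mathcal{K}(\mathbf{z})\|\sim\|\mathbf{z}\|^{-2}$ and the Lipschitz cancellation no longer suffices, so $R(t)$ would instead have to be controlled directly by the modulated energy $F(\bar\rho_\theta,\bar\rho)$ via commutator / stress-tensor estimates of Bresch--Jabin--Wang / Serfaty type, which is the genuinely delicate point. I would also flag that the integration-by-parts and differentiation-under-the-integral manipulations underlying the three lemmas presuppose enough smoothness and spatial decay of $\bar\rho$, $\bar\rho_\theta$ and their logarithms for boundary terms to drop, which we assume throughout.
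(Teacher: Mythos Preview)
Your overall architecture---apply \Cref{lem3}, complete the square to extract the $\tfrac14 m(t)$ term, bound the double-integral remainder $R(t)$ linearly by $E$, then run Gr\"onwall from $E(0)=0$---matches the paper exactly, and your completing-the-square and Gr\"onwall steps are identical to theirs.

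The one substantive difference is how you handle $R(t)$. The paper does not go through Pinsker at all: it invokes Lemma~5.2 of Bresch--Jabin--Wang \cite{bresch2019modulated} directly to obtain
\[
R(t)\;\le\; C\,\chi M\,\|\nabla\mathcal{A}[\bar\rho]\|_{L^\infty}\,F(\bar\rho_\theta,\bar\rho)\;\le\; C C_1\chi M\,E(\bar\rho_\theta,\bar\rho),
\]
i.e.\ the remainder is controlled by the modulated \emph{energy} $F$, not by the $L^1$ norm via the relative entropy. This is precisely the commutator/stress-tensor route you flag in your last paragraph as needed for $d=3$. Your bounded-integrand shortcut (pointwise $|\nabla\mathcal{K}(x-y)\cdot(\mathcal{A}[\bar\rho](x)-\mathcal{A}[\bar\rho](y))|\le C_dC_1$, then $\|\bar\rho_\theta-\bar\rho\|_{L^1}^2$, then Pinsker) is a valid and more elementary argument in $d=2$, but it buys you two drawbacks: it does not survive to $d=3$, and it injects an extra $1/\mu$ into the Gr\"onwall rate. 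Since the paper's framework explicitly covers $d=3$ (cf.\ \Cref{test4_3D}) and its stated exponent $CC_1\chi M T$ is $\mu$-free, the Bresch--Jabin--Wang route is the one actually intended; your proposal already identifies it, so the only fix is to use it from the start rather than as a fallback.
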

\begin{proof}
Applying \Cref{lem3}, we have
	\begin{align*}
		&\frac{d}{dt}E(\bar{\rho}_{\theta},\bar{\rho})=-\int_{\Omega}\bar{\rho}_{\theta}\Vert \chi M \nabla \mathcal{K}*(\bar{\rho}_{\theta}-\bar{\rho})+\mu \nabla \log \frac{\bar{\rho}_{\theta}}{\bar{\rho}}-\frac{1}{2}\epsilon(\mathbf{x},t)\Vert^2
	+\frac{1}{4}\int_{\Omega}\bar{\rho}_{\theta}\Vert \epsilon(\mathbf{x},t)\Vert^2\\
	&+\chi M\frac{1}{2}\int_{\Omega^2}\nabla \mathcal{K}(\mathbf{x}-\mathbf{y})\cdot(\mathcal{A}[\bar{\rho}](\mathbf{x})-\mathcal{A}[\bar{\rho}](\mathbf{y}))d (\bar{\rho}_{\theta}-\bar{\rho})^{\otimes 2}(\mathbf{x},\mathbf{y}).
	\end{align*}
	Lemma 5.2 \cite{bresch2019modulated} tells that if the ground truth $\mathcal{A}[\bar{\rho}]$ is Lipschitz, i.e., 
	$\mathcal{A}[\bar{\rho}]\in W^{1,\infty}$, then
	\begin{align*}
		&\chi M\frac{1}{2}\int_{\Omega^2}\nabla \mathcal{K}(\mathbf{x}-\mathbf{y})\cdot(\mathcal{A}[\bar{\rho}](\mathbf{x})-\mathcal{A}[\bar{\rho}](\mathbf{y}))d (\bar{\rho}_{\theta}-\bar{\rho})^{\otimes 2}(\mathbf{x},\mathbf{y})\\
    &\leq C \chi M \Vert\nabla \mathcal{A}[\bar{\rho}] \Vert_{L^{\infty}}F(\bar{\rho}_{\theta},\bar{\rho}).
	\end{align*}
Hence,
\begin{align*}
	\frac{d}{dt}E(\bar{\rho}_{\theta},\bar{\rho})\leq&\frac{1}{4}\int_{\Omega}\bar{\rho}_{\theta}\Vert \epsilon(\mathbf{x},t)\Vert^2
	+CC_1\chi M F(\bar{\rho}_{\theta},\bar{\rho})\\
	\leq &\frac{1}{4}\int_{\Omega}\bar{\rho}_{\theta}\Vert \epsilon(\mathbf{x},t)\Vert^2
	+CC_1\chi M E(\bar{\rho}_{\theta},\bar{\rho}).
\end{align*}
Applying Gronwall inequality, we obtain, for $\forall t \in[0,T]$
\begin{align*}
	E(\bar{\rho}_{\theta},\bar{\rho})&\leq \frac{1}{4}\int_{0}^t \exp(CC_1\chi M (t-s))\int_{\Omega}\bar{\rho}_{\theta}\Vert \epsilon(\mathbf{x},s)\Vert^2 d\mathbf{x} ds\\
&\leq \frac{1}{4}\exp(CC_1\chi M t)\int_{0}^t \int_{\Omega}\bar{\rho}_{\theta}\Vert \epsilon(\mathbf{x},s)\Vert^2 d\mathbf{x} ds\\
&\leq \frac{1}{4}\exp(CC_1\chi M T) \mathcal{L}(\theta).
\end{align*}
The proof has been completed.
\end{proof}

\section{Numerical experiments} 
In this section, we present four test problems to show the accuracy of our DeepLagrangian method. In test problem 1, we demonstrate the advantages of our proposed time-marching strategies for solving KS equations. We further conduct a comparative analysis of our Lagrangian-based method against established Eulerian methods, specifically PINNs and Adaptive-PINNs, as detailed in \Cref{Eulerian}. Test problem 2 shows the effects of different initial conditions.
In test problem 3, the advection term with different physical parameters is considered.
Test problem 4 extends the investigation to the 3D KS equation. 

To evaluate the performance of our method, we compare the histogram of the network output of our method with the reference solution, where the output is the particles sampled from time-dependent KRnet (see \Cref{alg}). The reference solution is achieved by the interacting particle methods (IPM) \cite{wang2024deepparticle}, which simulates the following SDE:
\begin{align}\label{SDE}
	d\mathbf{x}^j&=-\frac{\chi M}{J_s}\nabla_{\mathbf{x}^j} \sum_{i=1,i\neq j}^{J_s} K_{\delta}(\mathbf{x}^j,\mathbf{x}^i)dt+\mathbf{v}(\mathbf{x}^j)dt+\sqrt{2\mu}dW^j,\, j=1,2,\dots,J_s,
\end{align}
where  $\mathbf{x}^j$ is the position of the $j$th particle, $dW^j$ is independent Brownian motions and $J_s$ is the number of particles.
As $J_s$ increases to $\infty$,  the macroscopic limit (McKean–Vlasov equation) of \eqref{SDE} is \eqref{fks}. For all test problems, the number of particles is set to $J_s=10^4$. 

In the following numerical experiments, the KS system with $\chi =\mu=1$ in \eqref{fks} is considered.
In \Cref{alg}, the total mass $M$ is set to be greater than $8\pi$ such that the solution of the KS system \eqref{fks} will blow up in a finite time. The regularization parameter is chosen as $\delta=10^{-3}$. The Monte Carlo integration of the loss function employs $I=50$ time points and $J=1000$ spatial points.
The training parameters include a batch size of 5000 and a total of 2000 epochs. The penalty parameter is set to $\beta=100$. The Adam optimizer with learning rate $\eta=0.003$ is applied.

For the time-dependent KRnet, the components of $x\in\mathbb{R}^{d}$ are partitioned into $d$ equal groups,
and one group is deactivated after 8 affine coupling layers, where the bijection given by each coupling layer is based on the outputs of a fully connected neural network with two hidden layers of 48 neurons and the Swish activation function.
The neural networks used for PINNs and Adaptive-PINNs are the same as the time-dependent KRnet of our method.
All neural networks are trained on a single
NVIDIA A100 Tensor Core GPU card.
 
\subsection{Test problem 1: 2D KS system without advection}\label{test1_section}
The KS system \eqref{fks} is considered in the case of $d=2,\mathbf{v}=0$.
Our goal is to learn the change of the KS solution depending on the evolution time $t$ starting from an initial condition. The initial condition is set to an unnormalized uniform distribution on a ball with a radius of 1 centered at the origin.
The total mass is set to $M=16\pi\geq 8\pi$. The system will blow up when $t\geq 0.125$.
The final time is set to $T=0.12$. For the time-dependent KRnet, its prior $p_{\mathbf{z}}(\mathbf{z})$ is set to a standard Gaussian distribution.

First, we explain why we need to use time marching strategies. We set $I=100,\,J=1000$ in \eqref{La_em} for learning the KS solution within $[0,T]$. \Cref{problem1_long} shows the histogram of the network output of the time-dependent KRnet and the reference solution, where it can be seen that the time-dependent KRnet fails to learn the KS solution when $t>0.04$.  In order to obtain accurate KS solutions, the time-dependent KRnet is combined with time marching strategies to solve the KS equations in the rest of this paper.
\begin{figure}[!htb]
	\centering
	\subfloat[][$t=0.01$]{\includegraphics[width=.5
 \textwidth]{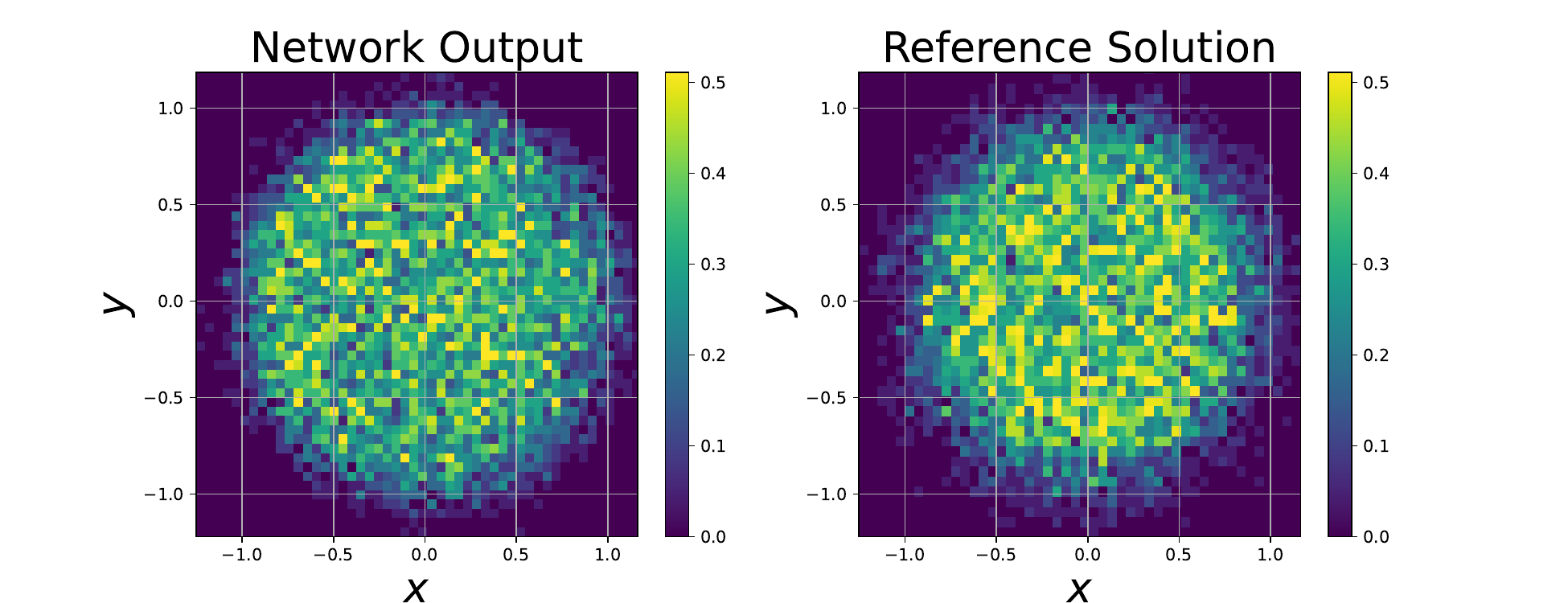}}
	\subfloat[][$t=0.03$]{\includegraphics[width=.5\textwidth]{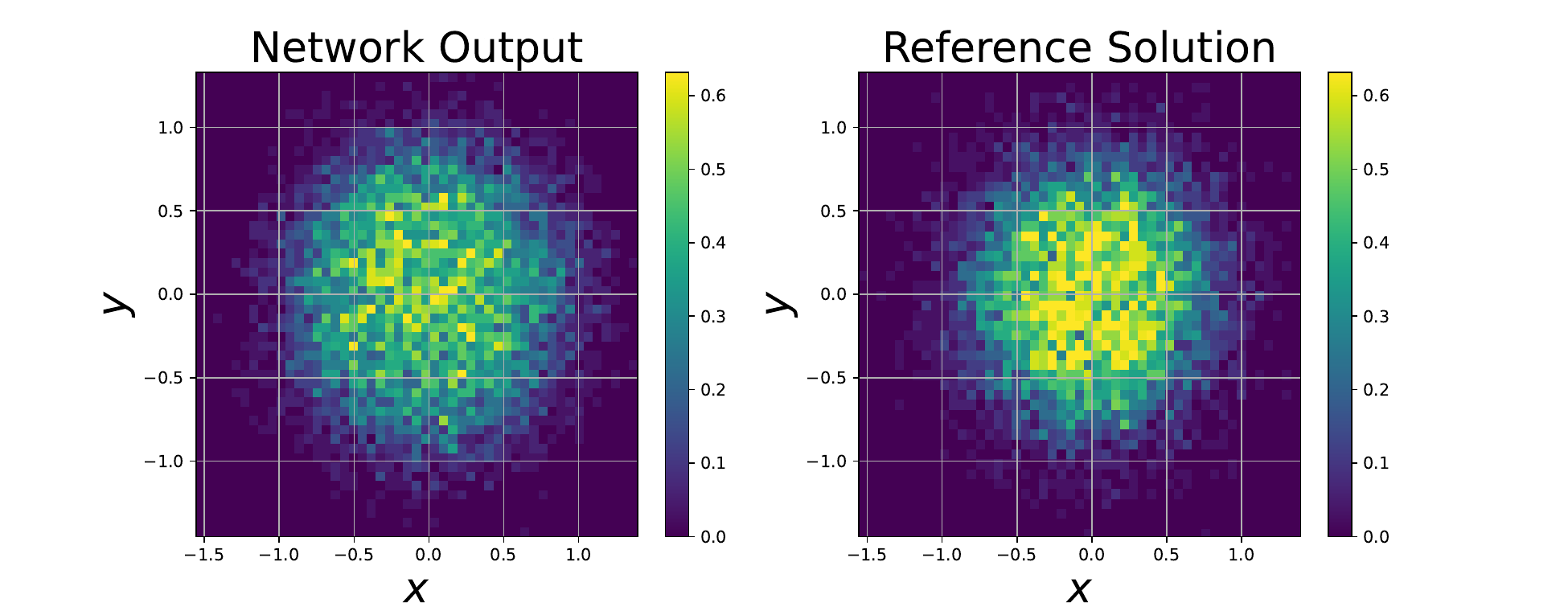}}
    \\
    \subfloat[][$t=0.04$]{\includegraphics[width=.5\textwidth]{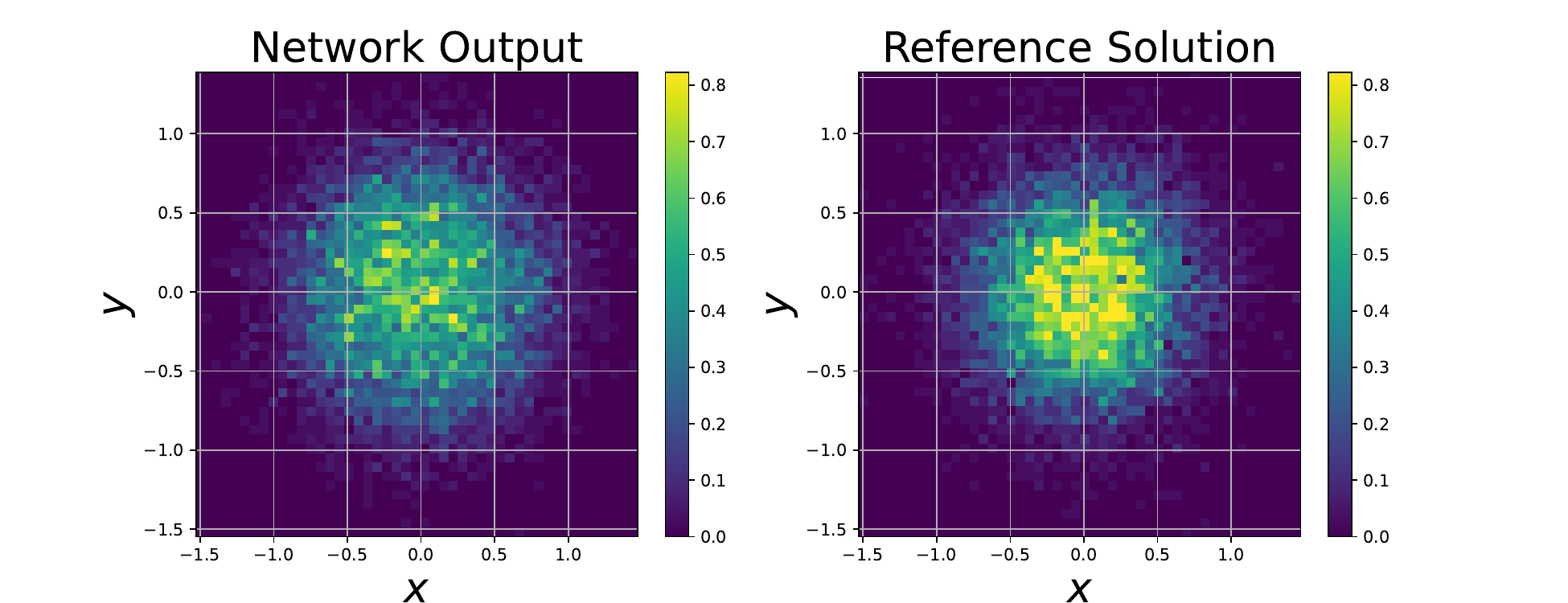}}
	\subfloat[][$t=0.05$]{\includegraphics[width=.5\textwidth]{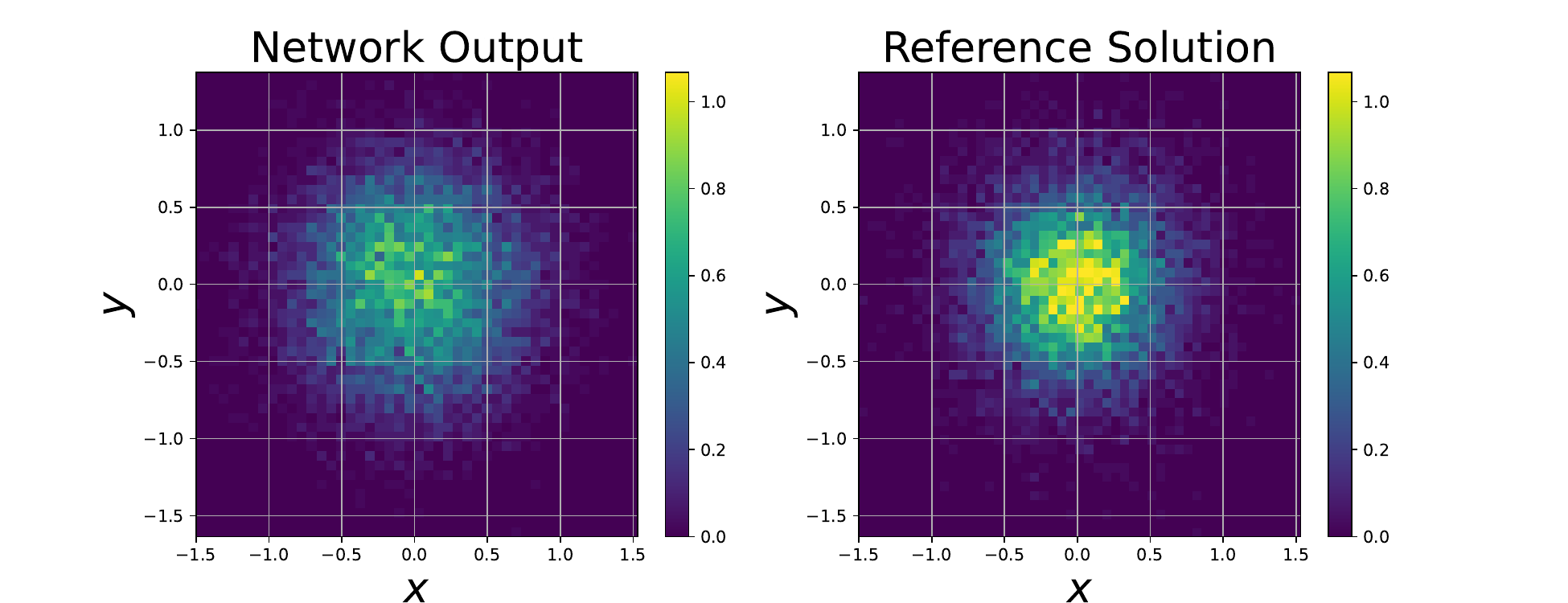}}\\
	\subfloat[][$t=0.08$]{\includegraphics[width=.5\textwidth]{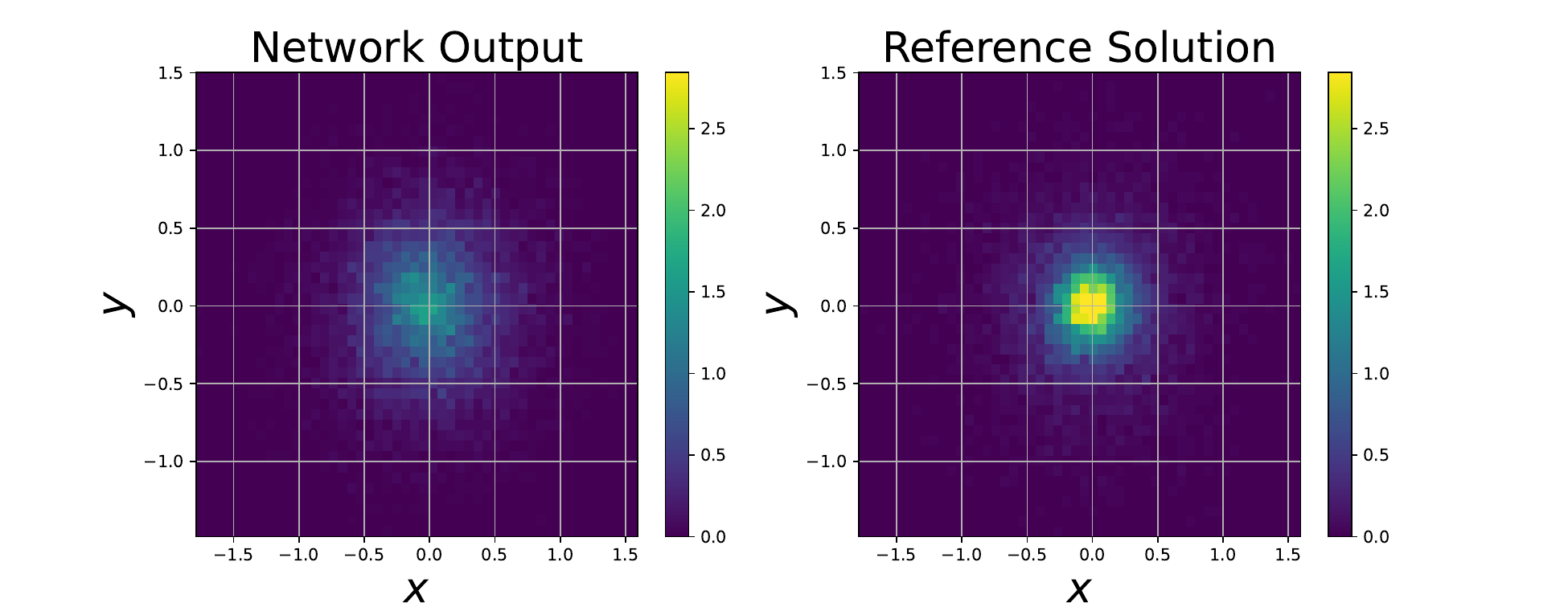}}
    \subfloat[][$t=0.1$]{\includegraphics[width=.5\textwidth]{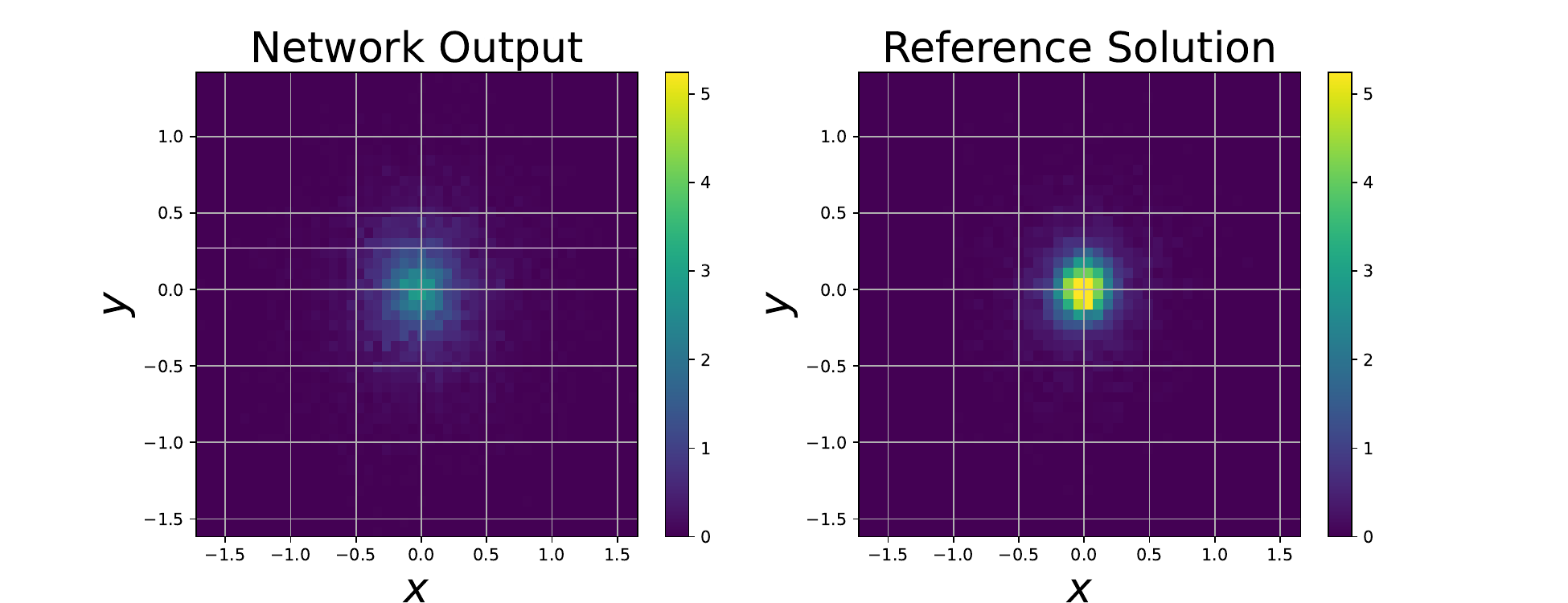}}
	\caption{2D KS solutions without time marching, test problem 1.}
    \label{problem1_long}
\end{figure}

For achieving the time marching strategies, we equally divide $[0,T]$ into four parts and then in each part, apply a time-dependent KRnet to approximate the KS solution.  \Cref{problem1_ks} shows the KS solution obtained by our DeepLagrangian method and the reference solution at different times, where it is clear that they are visually indistinguishable even near the blow-up time.
\begin{figure}[!htb]
	\centering
	\subfloat[][$t=0.01$]{\includegraphics[width=.5
 \textwidth]{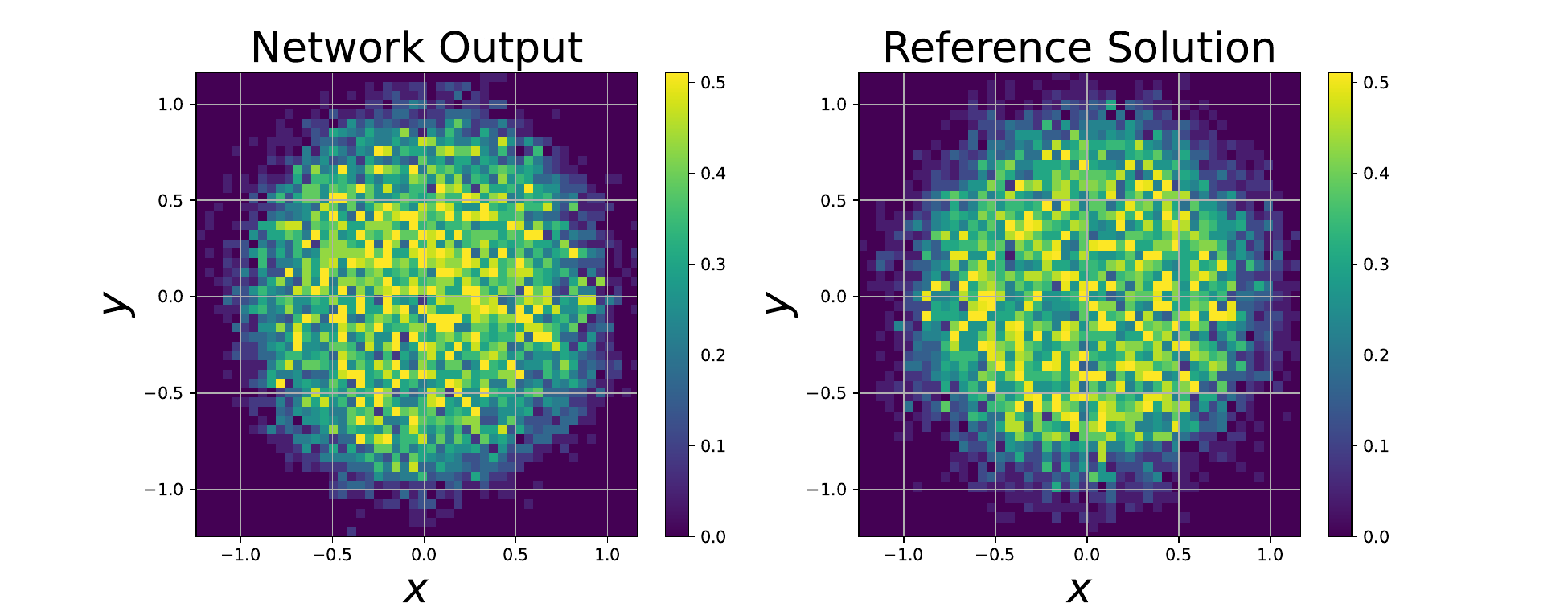}}
	\subfloat[][$t=0.04$]{\includegraphics[width=.5\textwidth]{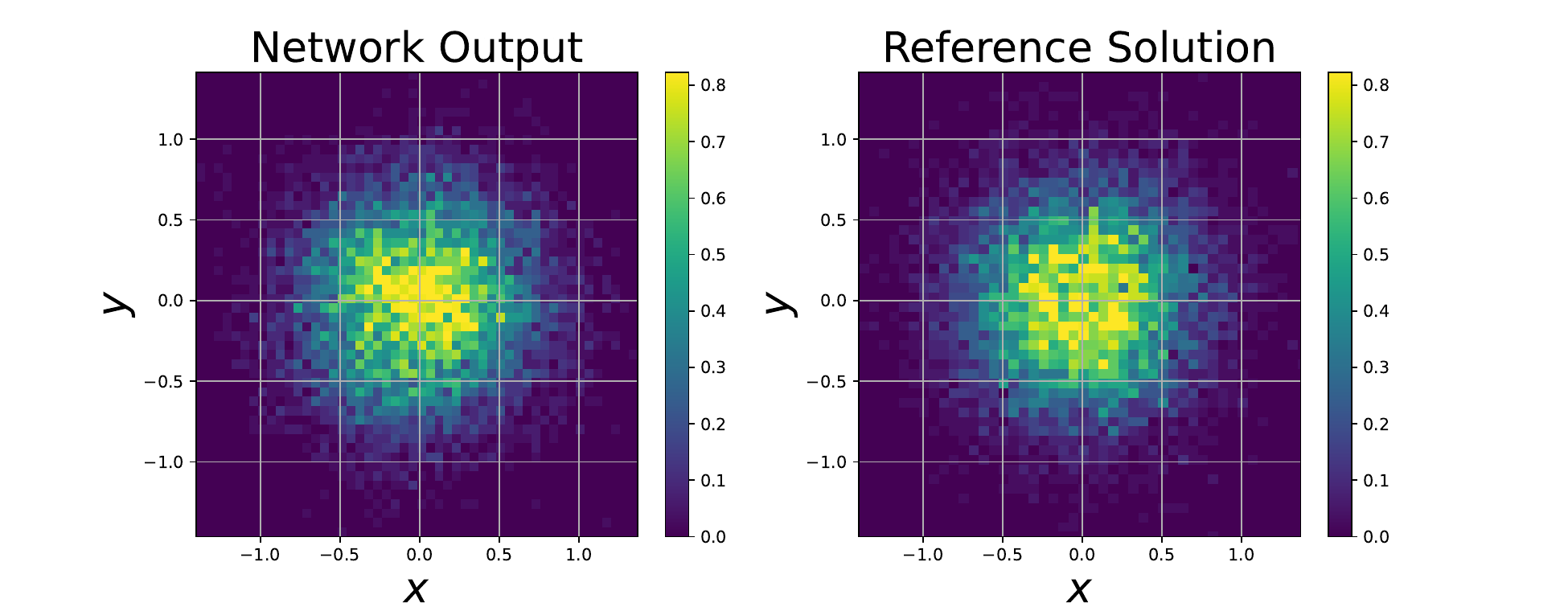}}\\
	\subfloat[][$t=0.08$]{\includegraphics[width=.5\textwidth]{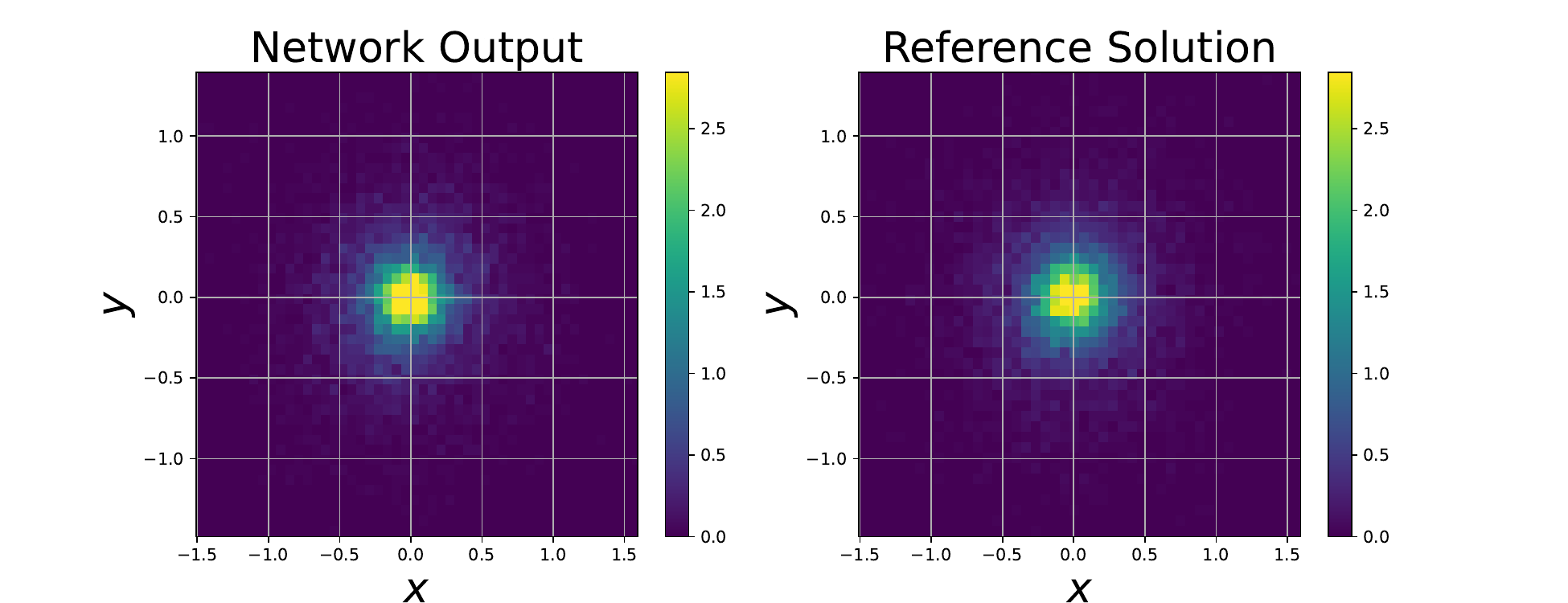}}
	\subfloat[][$t=0.12$]{\includegraphics[width=.5\textwidth]{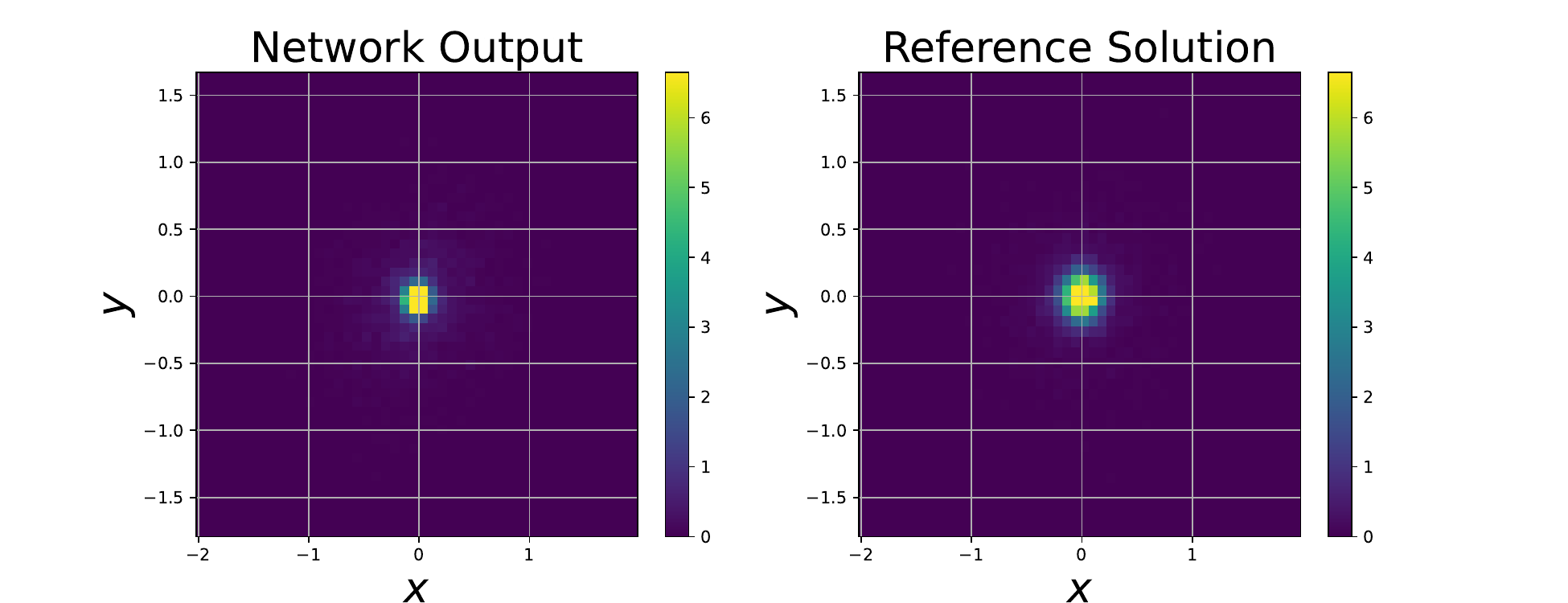}}
	\caption{2D KS solutions with time marching, test problem 1.}
    \label{problem1_ks}
\end{figure}

Then, our method is compared with PINNs and Adaptive-PINNs within the time intervals $[0,0.03]$.
\Cref{problem1_compare} illustrates the KS solutions computed by our method, PINNs, and Adaptive-PINNs, where it is obvious that our solution is close to the reference solution at $t=0.01,0.03$, but the solutions of PINNs and Adaptive-PINNs differ from the reference solution at $t=0.03$.
This is attributed to the fact that the Lagrangian framework can naturally track near-singular solutions.  
\begin{figure}[!htb]
	\centering
     \subfloat[][Our method, $t=0.01$]{\includegraphics[width=.5\textwidth]{ks1_2-eps-converted-to.pdf}}
     \subfloat[][Our method, $t=0.03$]{\includegraphics[width=.5\textwidth]{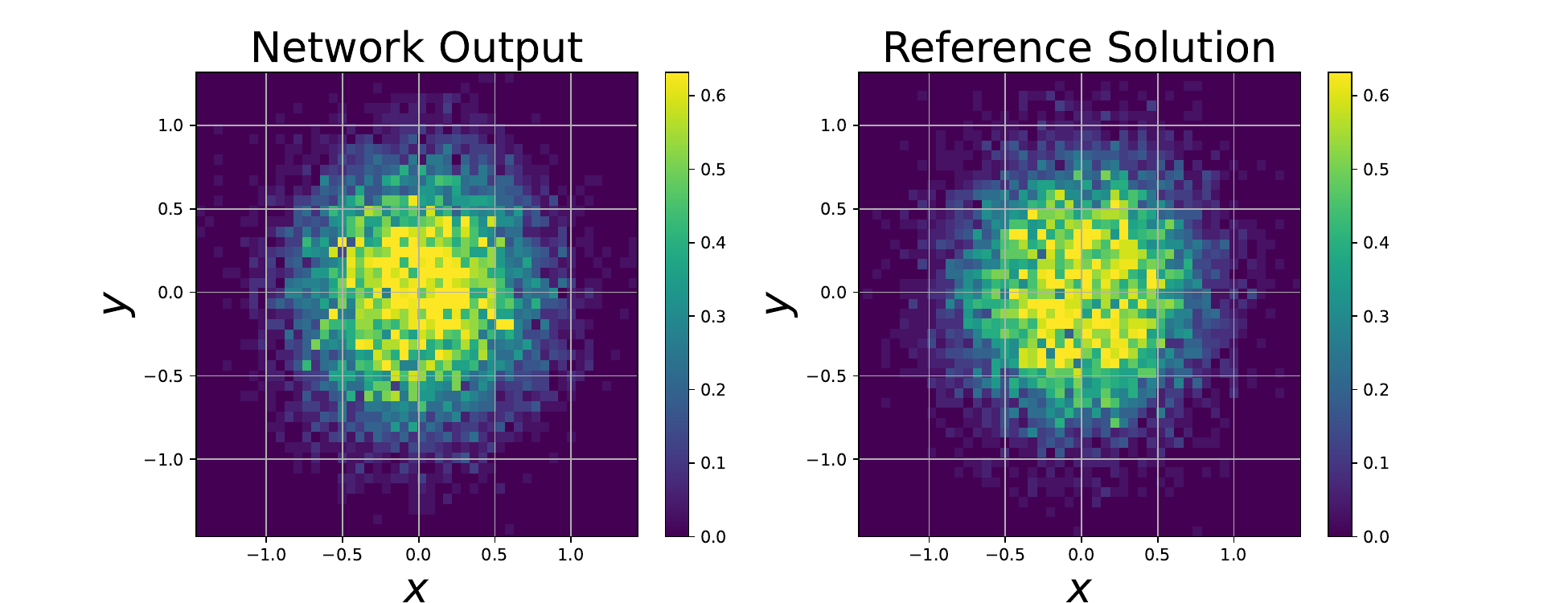}}
     \\
	\subfloat[][PINNs, $t=0.01$]{\includegraphics[width=.5
 \textwidth]{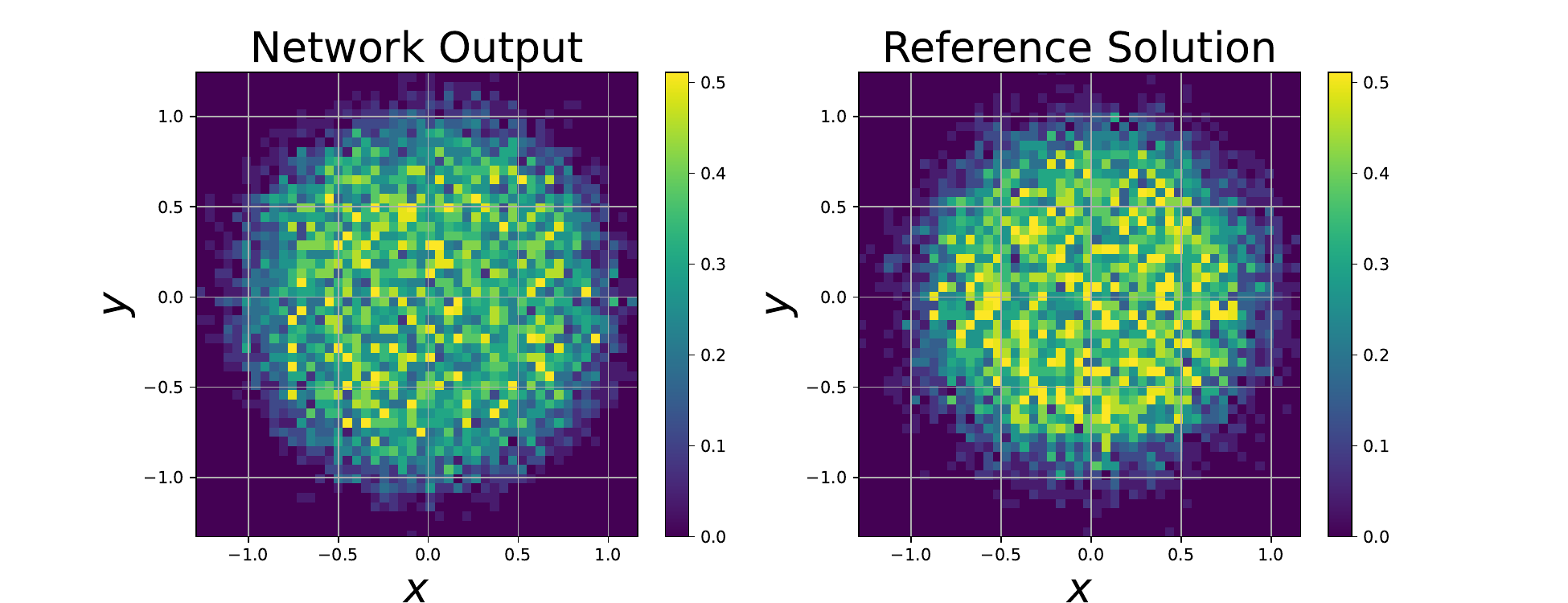}}
 \subfloat[][PINNs, $t=0.03$]{\includegraphics[width=.5\textwidth]{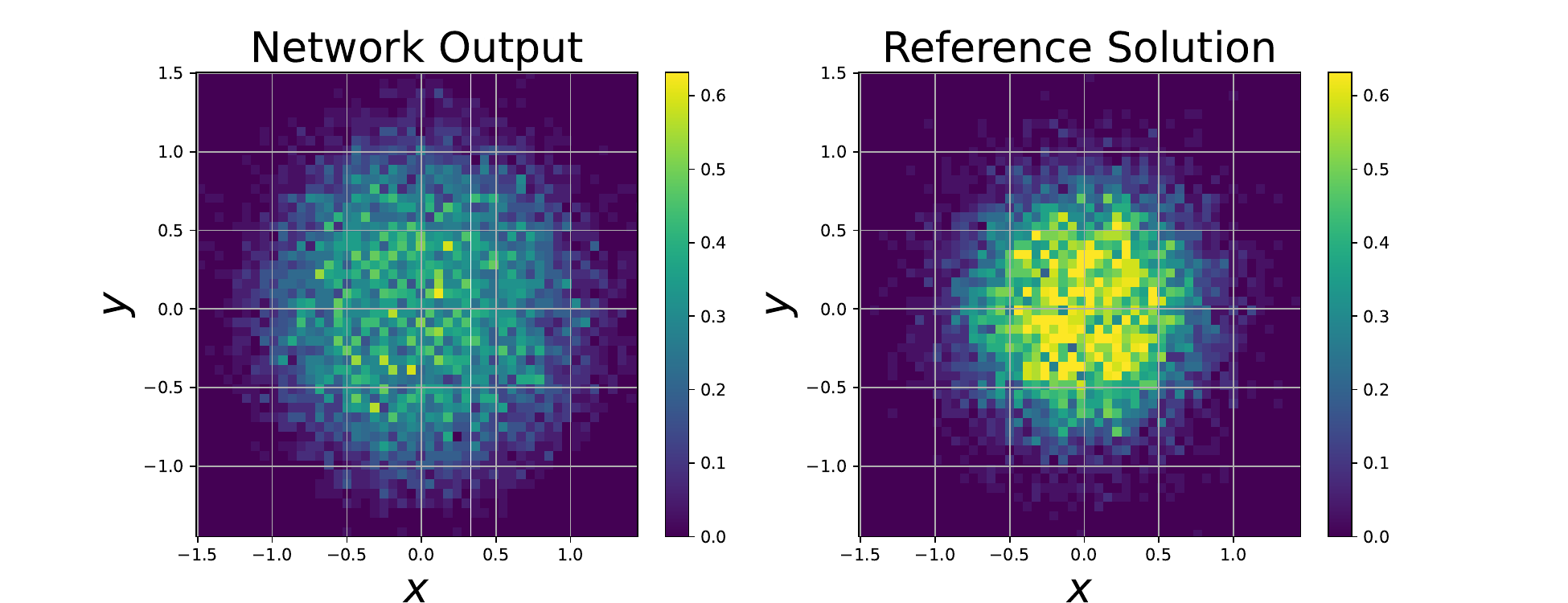}}
 \\
\subfloat[][Adaptive-PINNs , $t=0.01$]{\includegraphics[width=.5\textwidth]{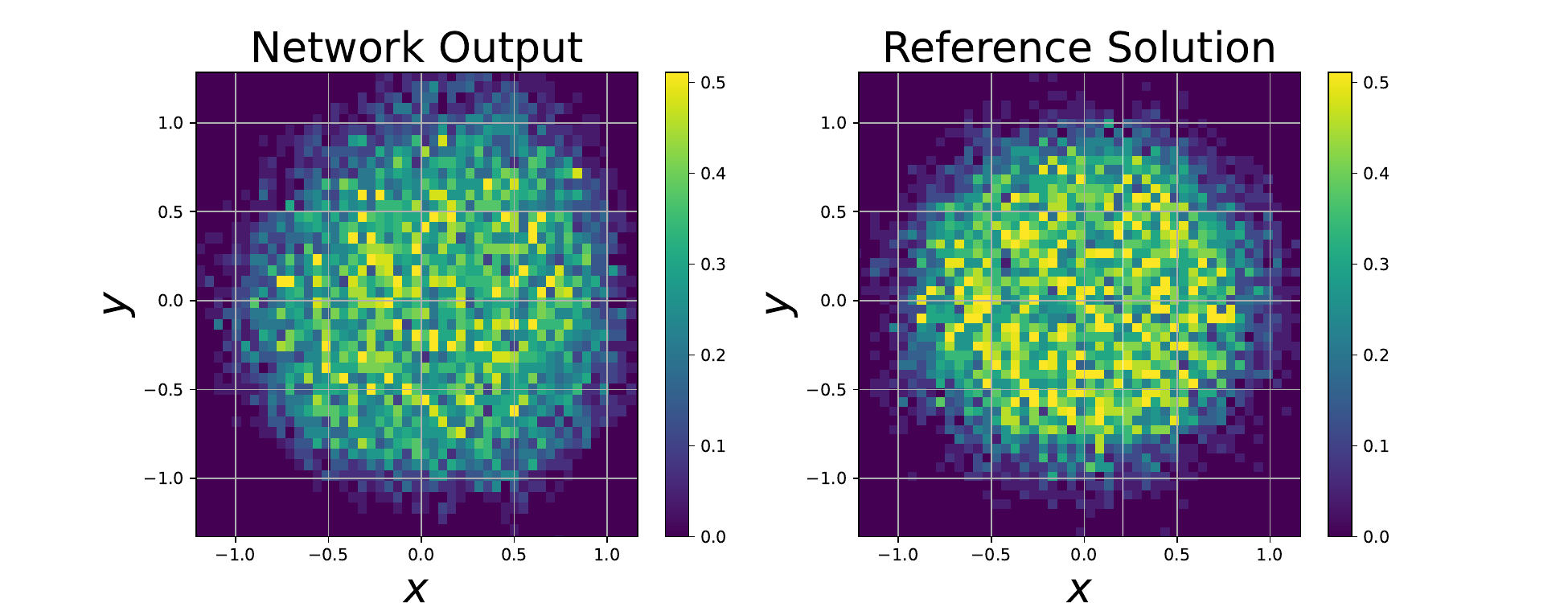}}
\subfloat[][Adaptive-PINNs, $t=0.03$]{\includegraphics[width=.5\textwidth]{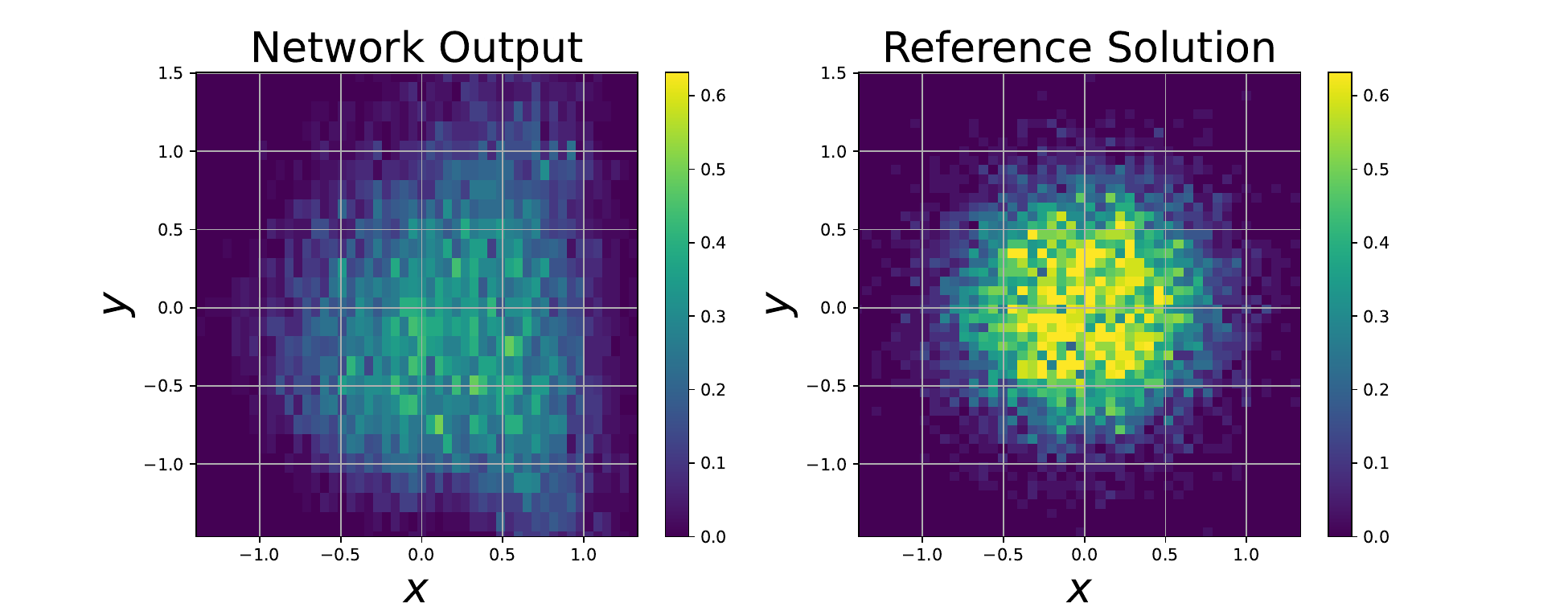}}
	\caption{Comparison of 2D KS solutions obtained by our DeepLagrangian method, PINNs and Adaptive-PINNs, test problem 1.}
    \label{problem1_compare}
\end{figure}

\subsection{Test problem 2: 2D KS system without advection for different initial conditions}
In this test problem, we also consider a 2D KS system without advection, but the initial conditions are different from test problem 1. Specifically, the initial conditions considered are composed of two distributions.

\subsubsection{Case 1: Two closely spaced distributions composing initial conditions}
In this case, the initial condition is given by 
\begin{align*}
	\rho^0(\mathbf{x})=7\pi p_1(\mathbf{x}) + 7\pi p_2(\mathbf{x}),
\end{align*}
where $p_1$ and $p_2$ are uniform distributions defined by
\begin{equation*}
	p_1(\mathbf{x})=p_1(x,y)=\left\{
	\begin{array}{ll}
	\frac{4}{\pi}, &(x-0.3)^2+(y-0.3)^2\leq 0.25,\\
	0, &(x-0.3)^2+(y-0.3)^2> 0.25,
	\end{array}
	\right.
\end{equation*}
\begin{equation*}
	p_2(\mathbf{x})=p_2(x,y)=\left\{
	\begin{array}{ll}
		\frac{4}{\pi}, &(x+0.5)^2+(y+0.5)^2\leq 0.25,\\
	0, &(x+0.5)^2+(y+0.5)^2> 0.25.
	\end{array}
	\right.
\end{equation*}
Let $\rho^{0,1}(\mathbf{x}):=7\pi p_1(\mathbf{x})$, $\rho^{0,2}(\mathbf{x}):=7\pi p_2(\mathbf{x})$.
Note that $\int_{\Omega} \rho^{0,1}(\mathbf{x}) d\mathbf{x}=\int_{\Omega} \rho^{0,2}(\mathbf{x}) d\mathbf{x}=7\pi \leq 8\pi$, so if $\rho^{0,1}(\mathbf{x})$ or $\rho^{0,2}(\mathbf{x})$ is set to the initial condition of the KS system, the system does not blow up.
In Case 1, we regard $\rho^{0,1}(\mathbf{x})+\rho^{0,2}(\mathbf{x})=\rho^0(\mathbf{x})$ as the initial condition and then the total mass $M=\int_{\Omega} \rho^0(\mathbf{x})d\mathbf{x}=14\pi \geq 8\pi$
will cause the system to blow up in finite time.
The final time is set to $T=0.09$. 

For conducting time marching strategies, $[0,T]$ is equally divided into three parts.
Our DeepLagrangian method (\Cref{alg}) constructs the time-dependent KRnet to approximate the normalized KS solutions. The prior of the time-dependent KRnet is set to a standard Gaussian distribution
$p_{\mathbf{z}}(\mathbf{z})$.
\Cref{problem2_noblow} provides the solutions of the 2D KS system without advection estimated by DeepLagrangian (Network Output) and IPM (Reference Solution), where it is evident that the two estimated solutions are consistent. From \Cref{problem2_noblow}(a) to \Cref{problem2_noblow}(d), it can be seen that two clusters of particles start to aggregate into a cluster of particles, and then the cluster blows up near $t=0.09$. 
\begin{figure}[!htb]
	\centering
	\subfloat[][$t=0.01$]{\includegraphics[width=.5
 \textwidth]{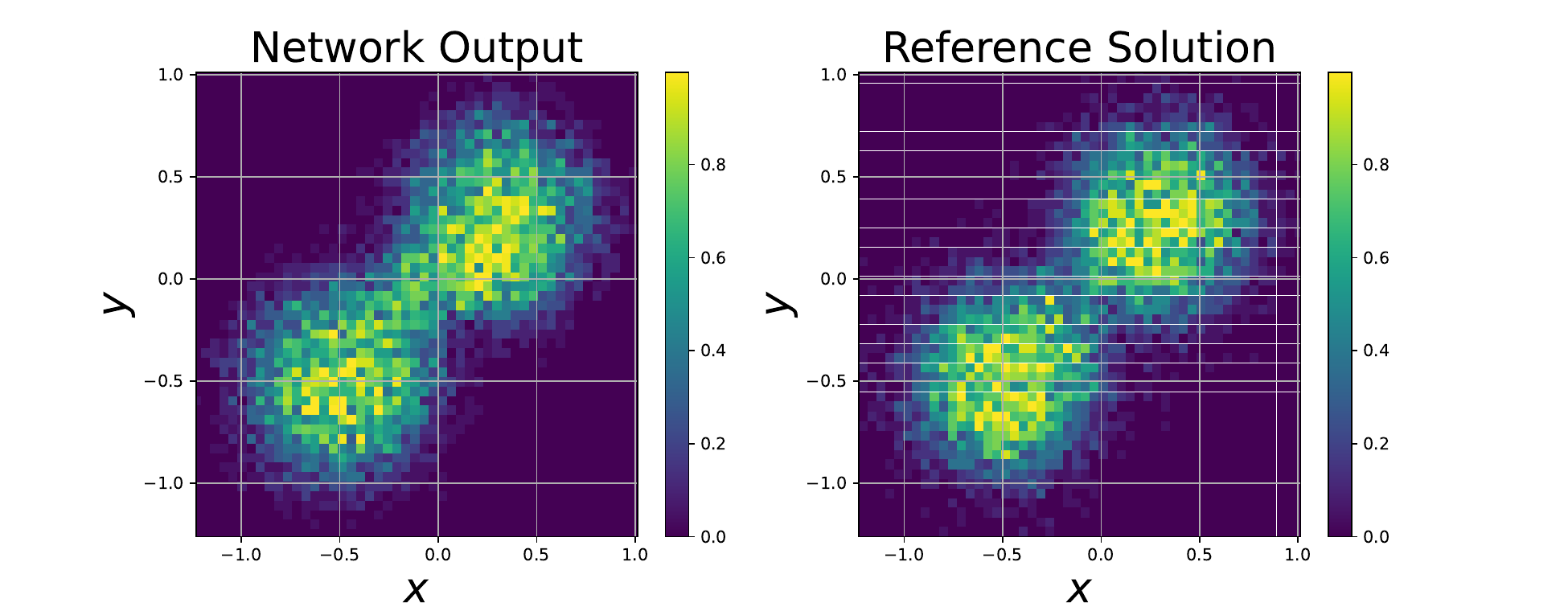}}
	\subfloat[][$t=0.04$]{\includegraphics[width=.5\textwidth]{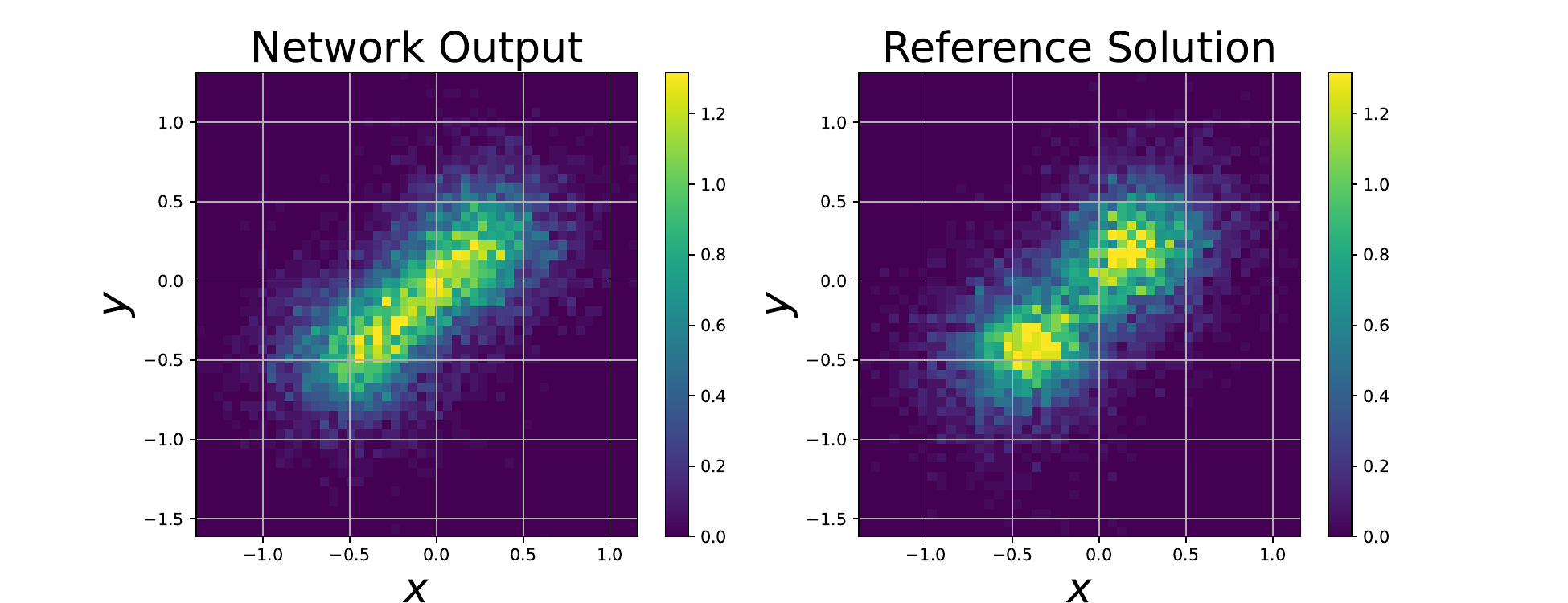}}\\
	\subfloat[][$t=0.08$]{\includegraphics[width=.5\textwidth]{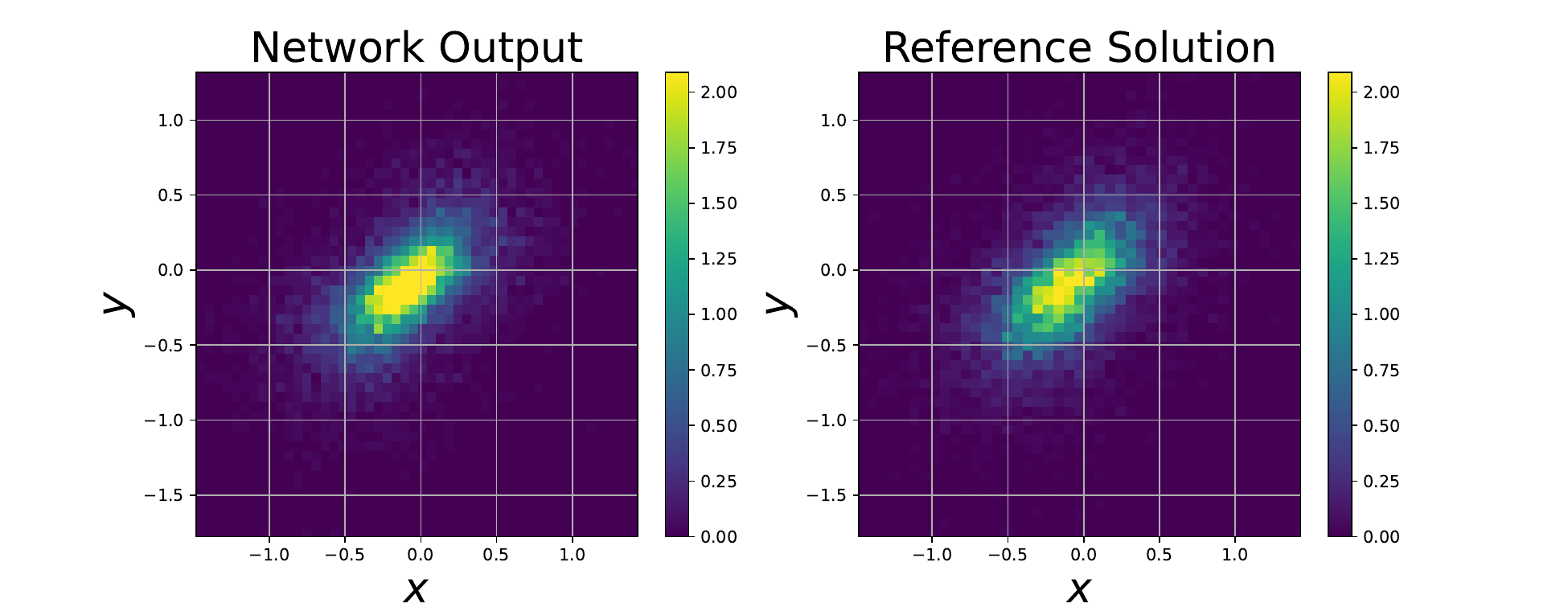}}
	\subfloat[][$t=0.09$ ]{\includegraphics[width=.5\textwidth]{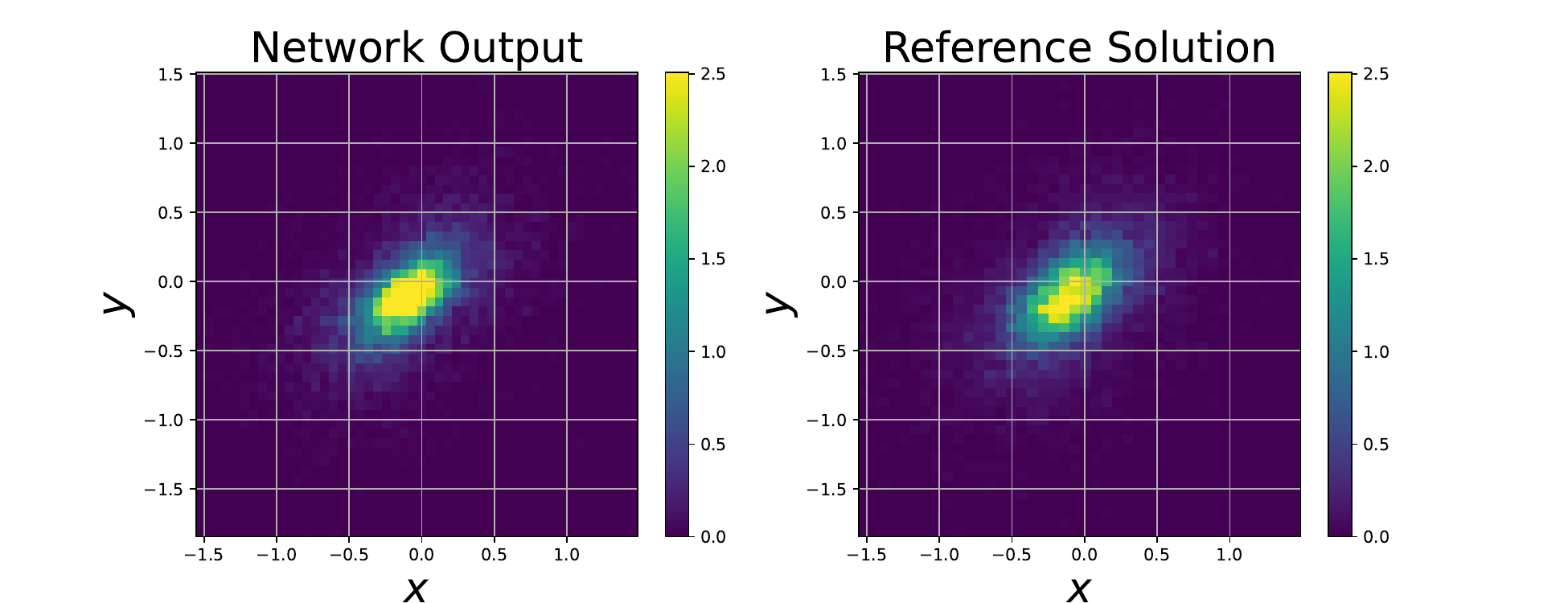}}
	\caption{Solutions of the 2D KS system without advection for Case 1, test problem 2.}
    \label{problem2_noblow}
\end{figure}
\subsubsection{Case 2: Two widely spaced distributions composing initial conditions}
In this case, the initial condition is set to be 
\begin{align*}
	\rho^0(\mathbf{x})=12\pi p_1(\mathbf{x})+ 12\pi p_2(\mathbf{x}),
\end{align*}
where $p_1$ and $p_2$ are uniform distributions defined by
\begin{equation*}
	p_1(\mathbf{x})=p_1(x,y)=\left\{
	\begin{array}{ll}
	\frac{1}{\pi}, &(x-1)^2+(y-1)^2\leq 1,\\
	0, &(x-1)^2+(y-1)^2> 1.
	\end{array}
	\right.
\end{equation*}
\begin{equation*}
	p_2(\mathbf{x})=p_2(x,y)=\left\{
	\begin{array}{ll}
		\frac{1}{\pi}, &(x+1)^2+(y+1)^2\leq 1,\\
	0, &(x+1)^2+(y+1)^2> 1.
	\end{array}
	\right.
\end{equation*}
The total mass is $M=24\pi$. The final time is set to $T=0.12$.

We equally decompose $[0,T]$ into three parts to achieve time marching strategies. Our DeepLagrangian method builds a time-dependent KRnet to approximate the normalized KS solutions. 
The prior $p_{\mathbf{z}}(\mathbf{z})$ of time-dependent KRnet is set to the following Gaussian mixture distribution,
\begin{equation*}
	p_{\mathbf{z}}(\mathbf{z})=\frac{1}{4\pi}\exp\Big(-\frac{1}{2}\Vert\mathbf{z}+3\Vert^2\Big)+\frac{1}{4\pi}\exp\Big(-\frac{1}{2}\Vert\mathbf{z}-3\Vert^2\Big).
\end{equation*} 
\Cref{problem2_mixed} illustrates the solutions of a 2D KS system without advection estimated by DeepLagrangian and IPM, where it is clear that the solution computed by DeepLagrangian aligns with the reference solution. From \Cref{problem2_mixed}(a) to \Cref{problem2_mixed}(d), we can see that two clusters of particles aggregate independently over time and finally blow up independently near $t=0.12$.  
\begin{figure}[!htb]
	\centering
	\subfloat[][$t=0.02$]{\includegraphics[width=.5
 \textwidth]{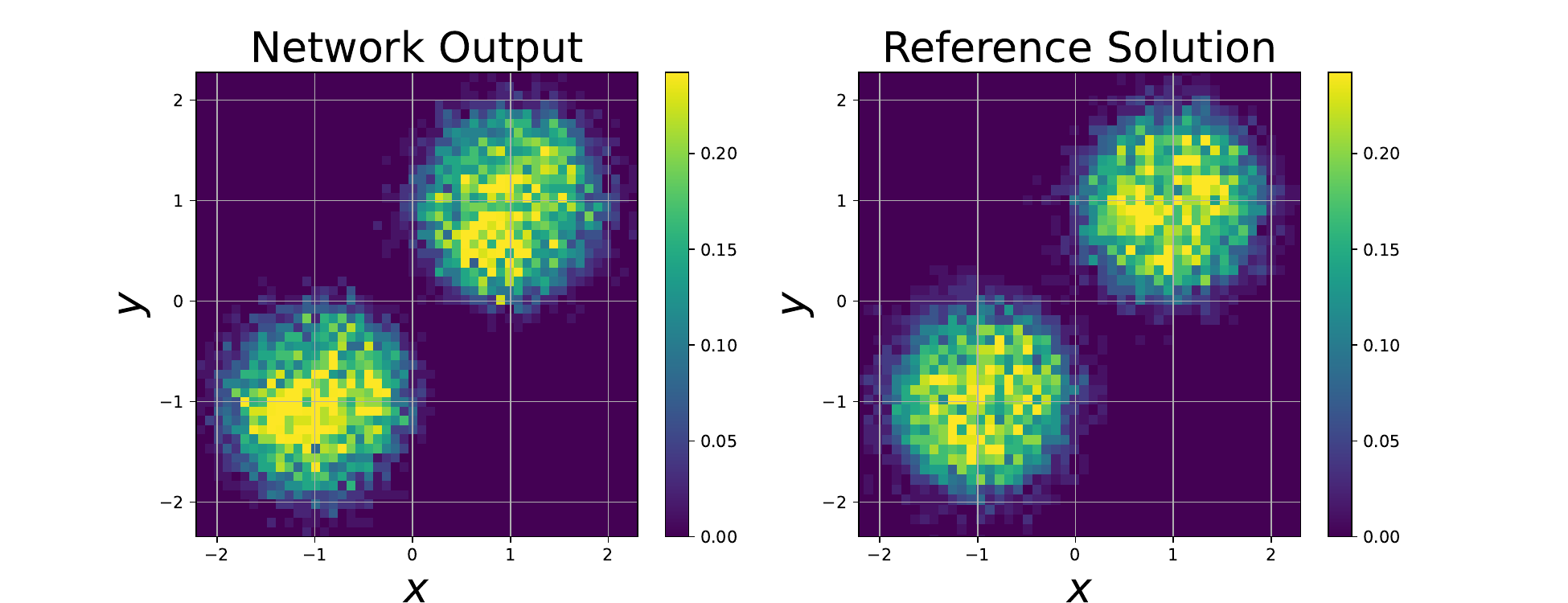}}
	\subfloat[][$t=0.06$]{\includegraphics[width=.5\textwidth]{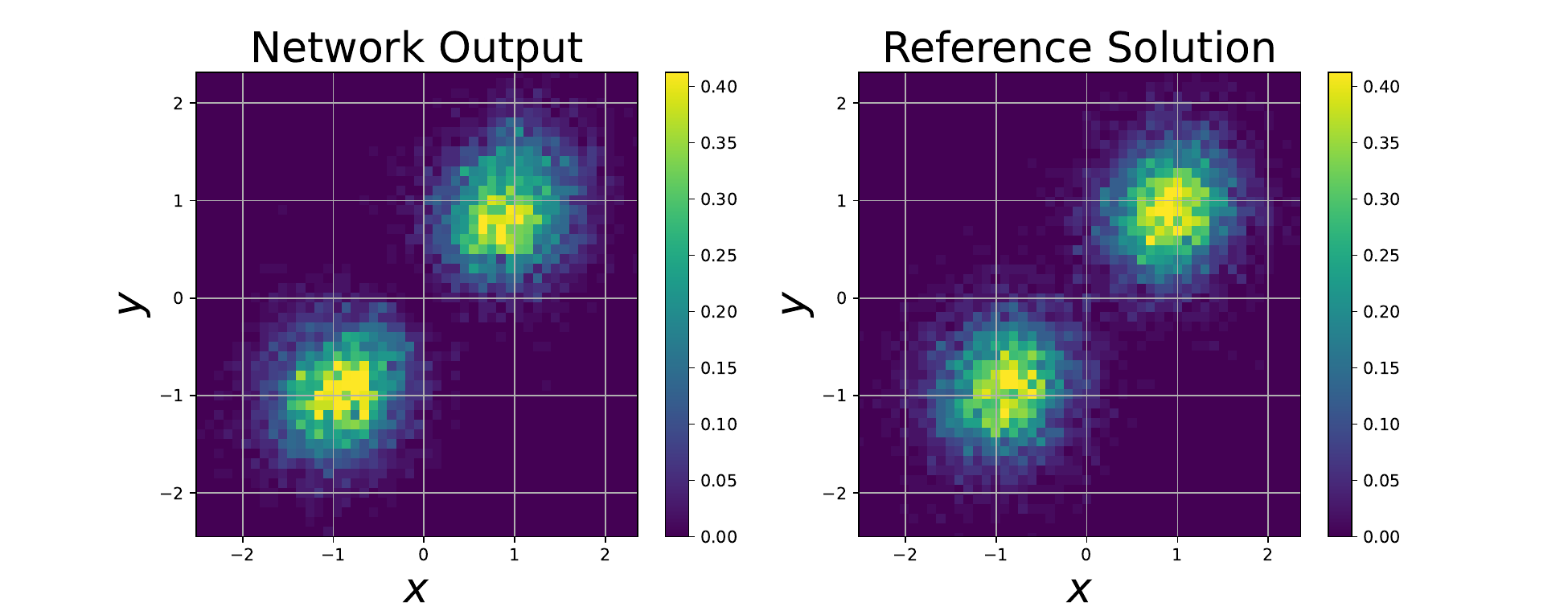}}\\
	\subfloat[][$t=0.08$]{\includegraphics[width=.5\textwidth]{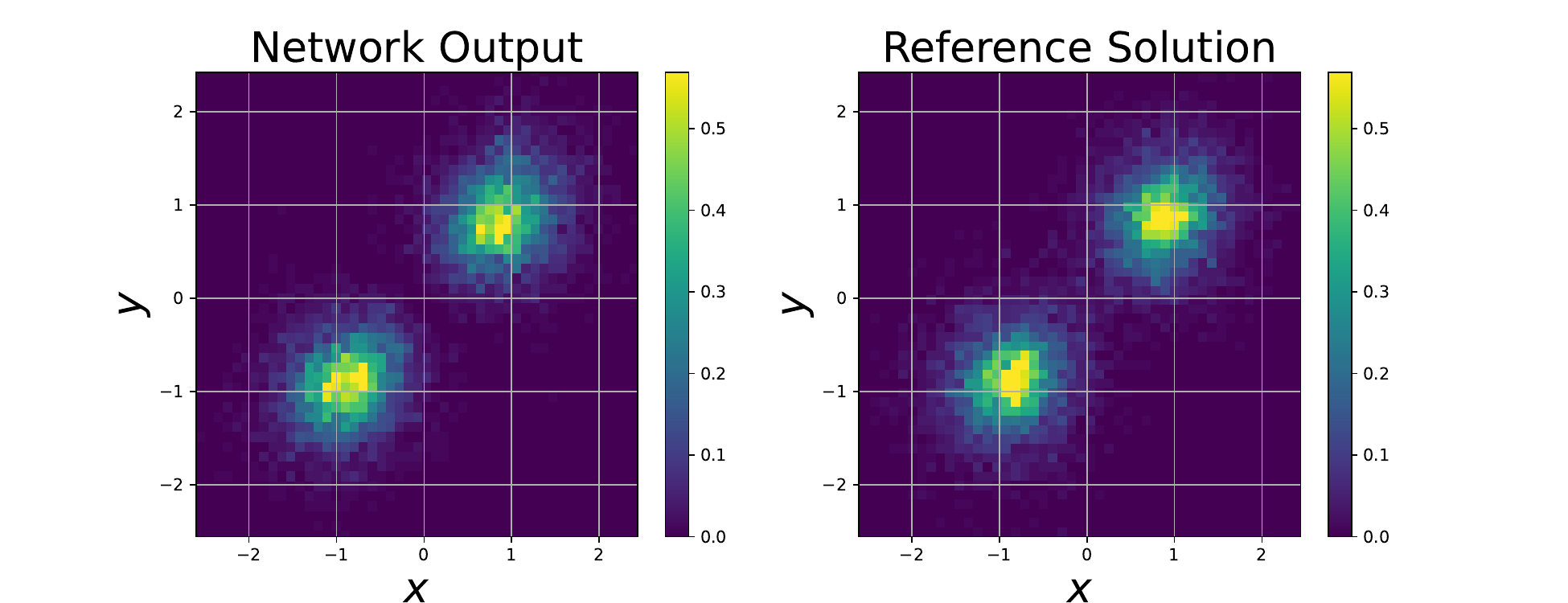}}
	\subfloat[][$t=0.12$]{\includegraphics[width=.5\textwidth]{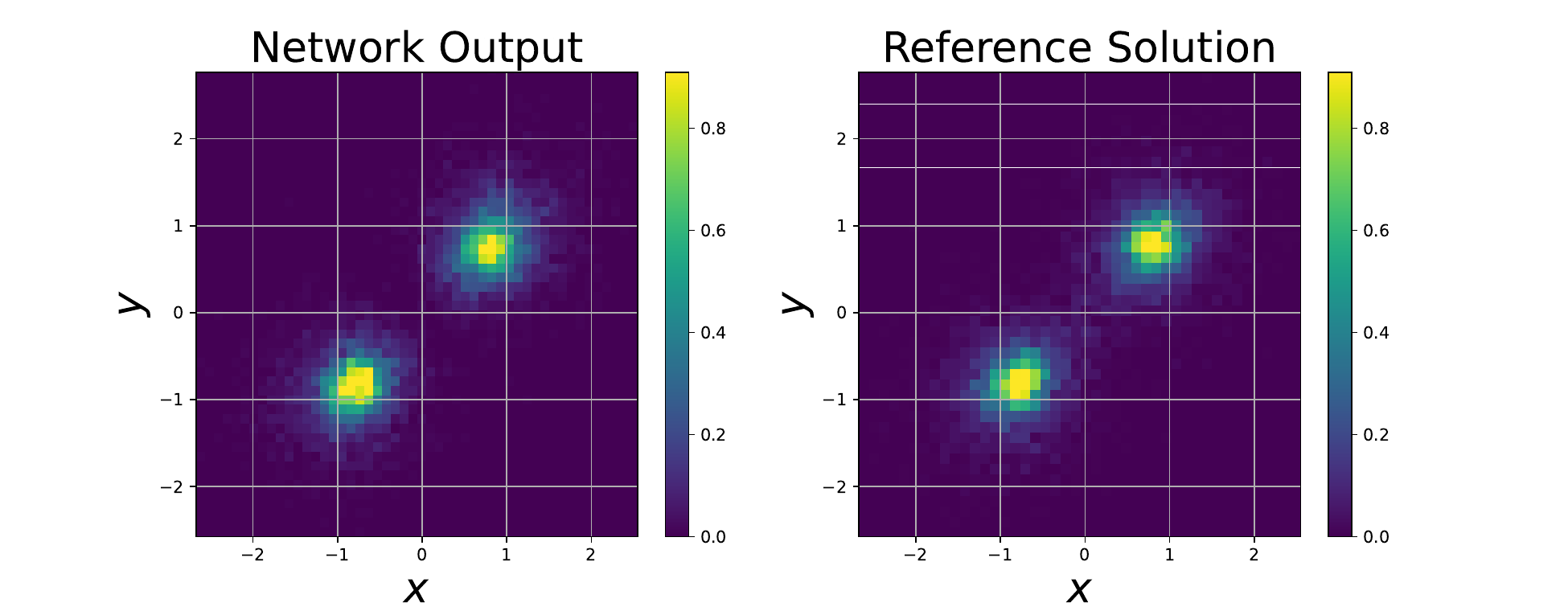}}
	\caption{Solutions of the 2D KS system without advection for Case 2, test problem 2.}
   \label{problem2_mixed}
\end{figure}

\subsection{Test problem 3: 2D KS system with advection }
In this test problem, we consider 
the 2D KS system \eqref{fks} with the following advection term,
\begin{align}\label{v_example}
	\mathbf{v}=\mathbf{v}(x,y)=A\left[\exp(-y^2) ,0\right]^T,
 \end{align}
 where $A$ represents the amplitude
 of the advection term and can also be regarded as a physical parameter.  The initial condition of the KS system is the same as that of test problem 1. Our goal is to study the dependence of the aggregation patterns of the 2D KS system on the evolution time $t$ and the physical parameter $A$ of the advection term.  
 
 \subsubsection{Dependence on the evolution time} \label{ev_part}
 To learn the dependence of 2D KS solutions on the evolution time, we fix the physical parameter $A=100$ of the advection term in \eqref{v_example} and set the final time $T=0.09$. The total mass is set to $M=16\pi$. 
 
 For considering time marching strategies, $[0,T]$ is equally divided into three parts. In each part, our DeepLagrangian method applies a time-dependent KRnet to approximate the solutions of a 2D KS system with advection, where a standard Gaussian distribution $p_{\mathbf{z}}(\mathbf{z})$ is assigned to the prior of the time-dependent KRnet.  \Cref{problem3_fixA} gives the solutions of the 2D KS system with advection at time $t=0.02,0.03,0.06,0.09$ estimated by our DeepLagrangian method and IPM, where the estimated solution of our method is in agreement with that of IPM.
 \begin{figure}[!htb]
	\centering
	\subfloat[][$t=0.02$]{\includegraphics[width=.5
 \textwidth]{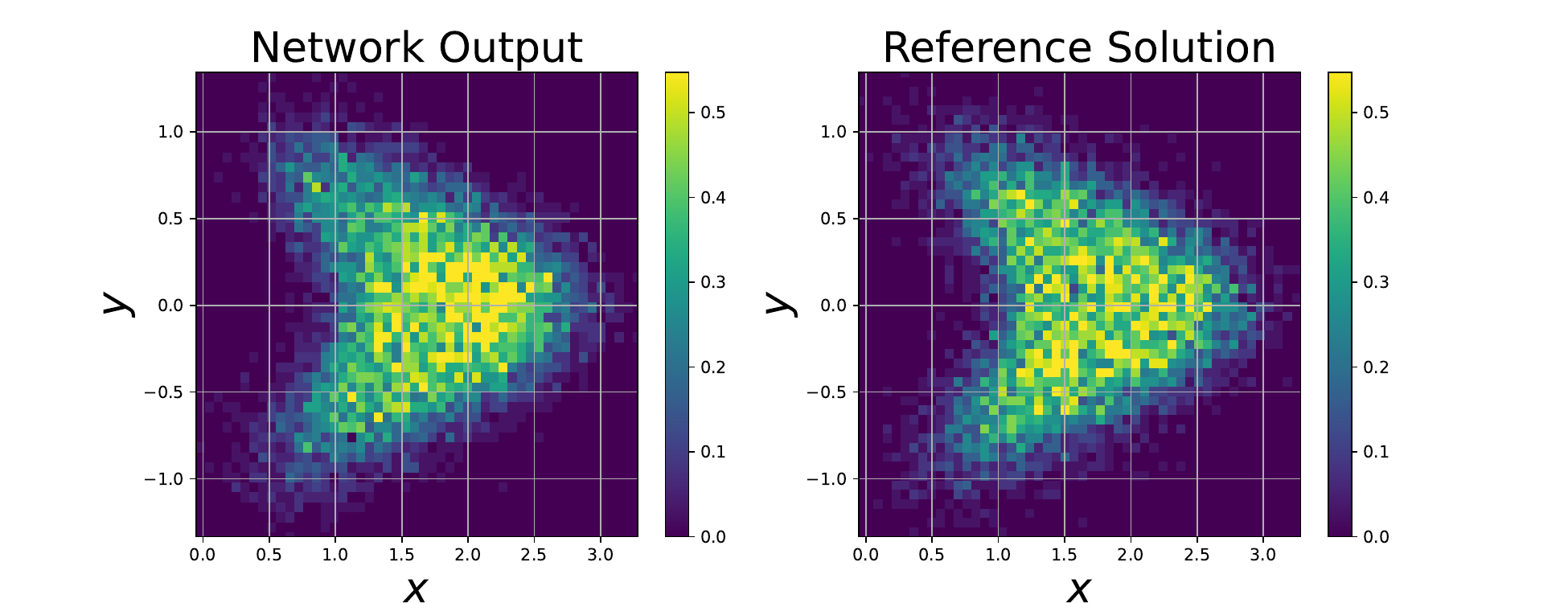}}
	\subfloat[][$t=0.03$]{\includegraphics[width=.5\textwidth]{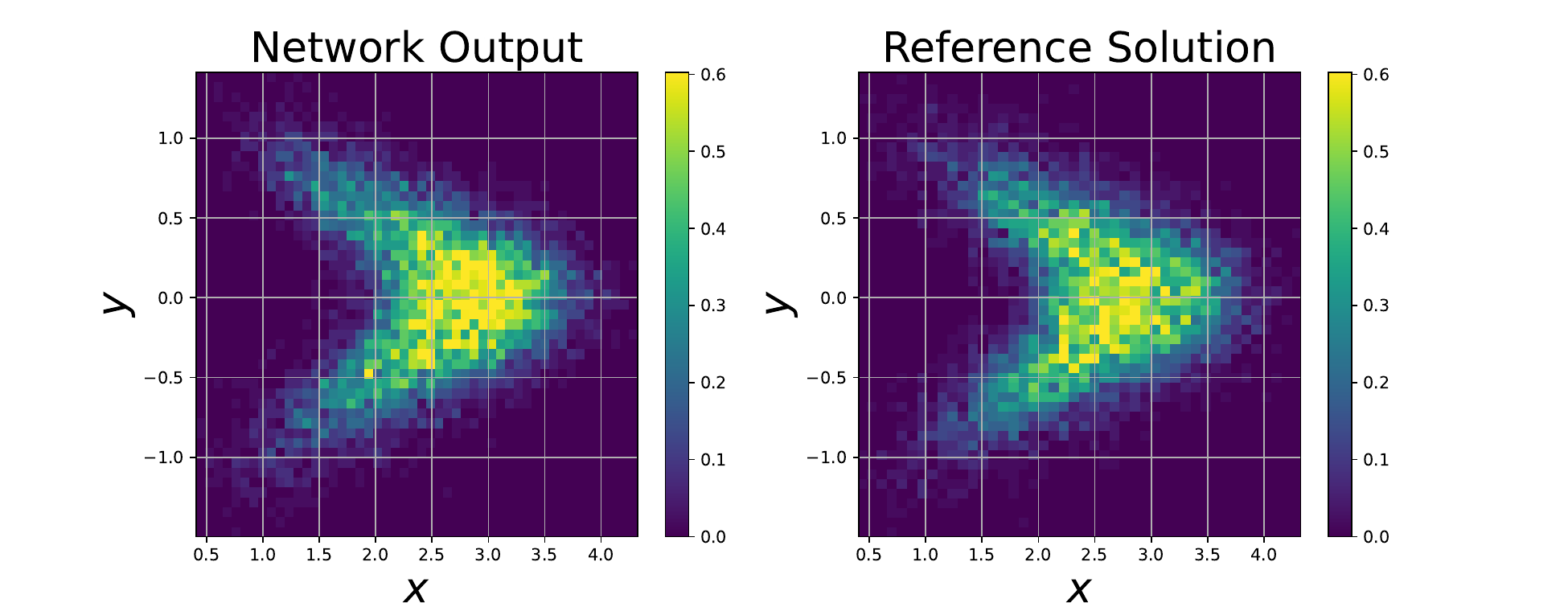}}\\
	\subfloat[][$t=0.06$]{\includegraphics[width=.5\textwidth]{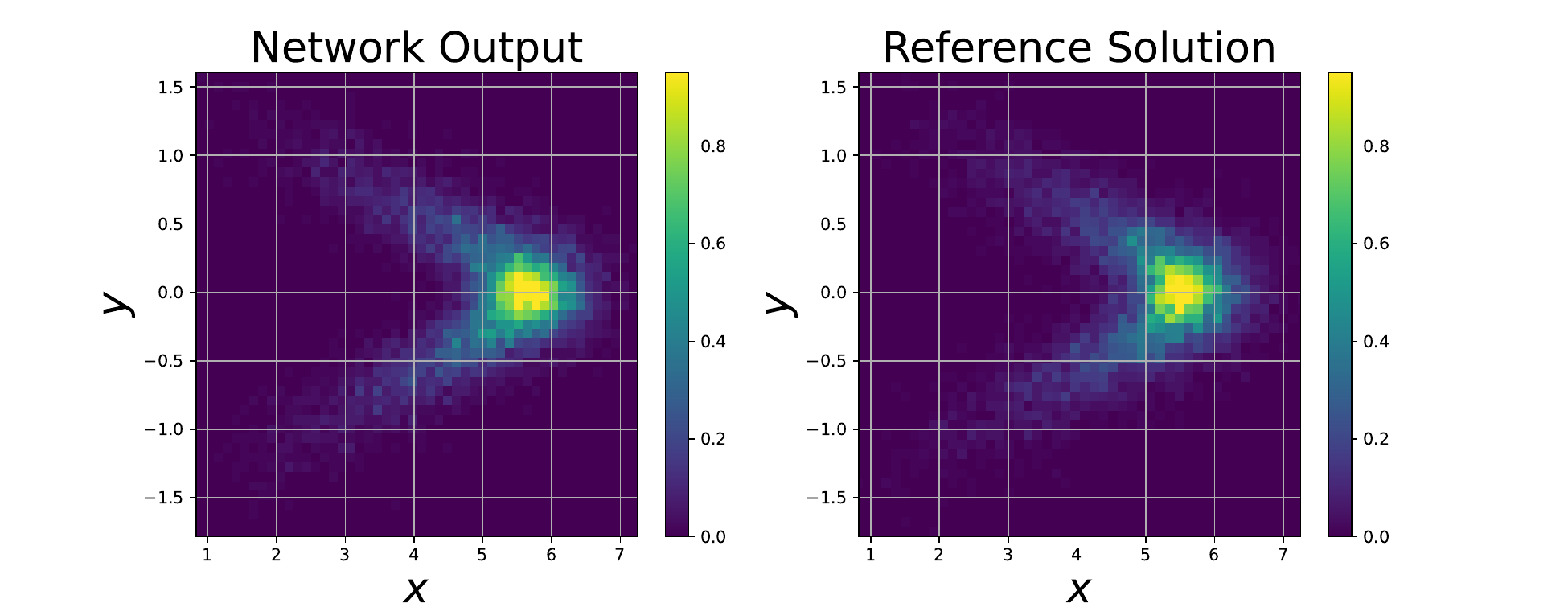}}
	\subfloat[][$t=0.09$]{\includegraphics[width=.5\textwidth]{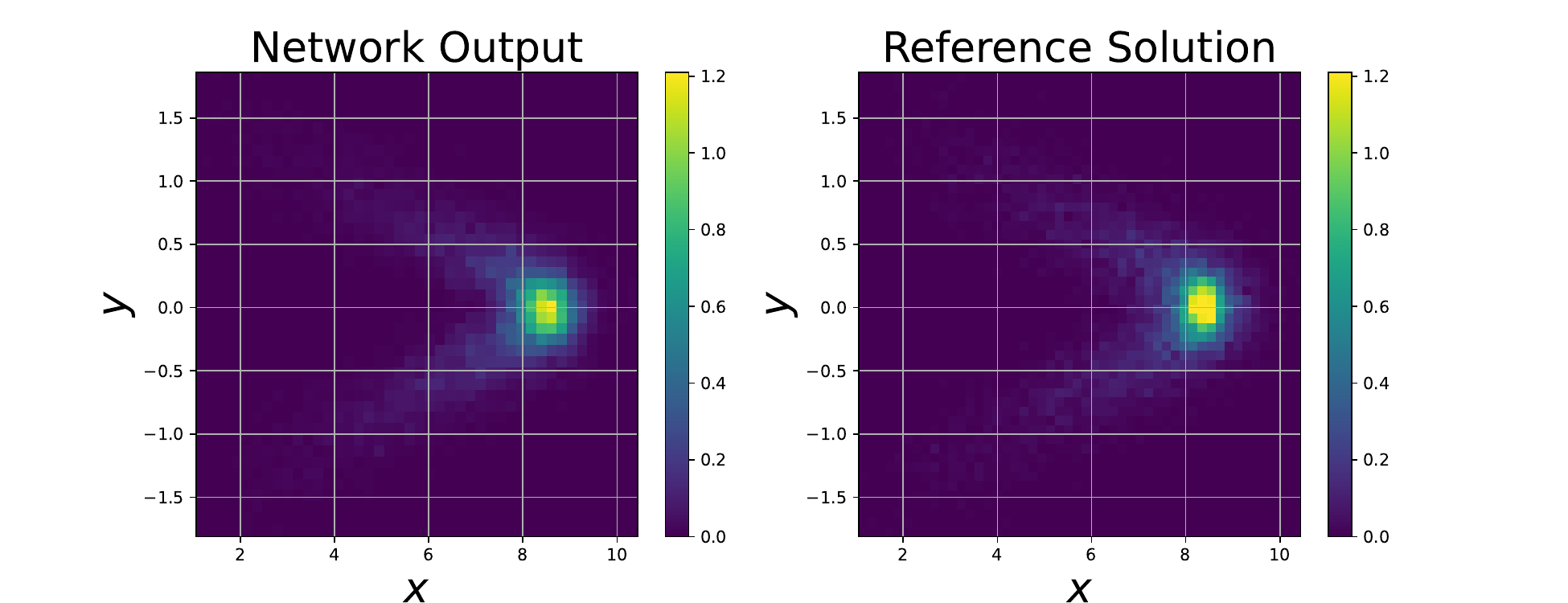}}
	\caption{Solutions of the 2D KS system with advection at different times, test problem 3.}
    \label{problem3_fixA}
\end{figure}
 \subsubsection{Dependence on the physical parameter}\label{section_parameter}
 To consider the dependence of 2D KS solutions on the physical parameter, we set the final time $T=0.02$ and the physical parameter $A=10^{0.2a}, a\sim \mathcal{U}(0,10)$.
 The total mass is set to $M=16\pi$.

Our method constructs a time-dependent KRnet to approximate the 2D KS solutions on the time interval $[0, T]$. The settings for the time-dependent KRnet are the same as \Cref{ev_part} except that the inputs of $\mathsf{NN}_1$ in \eqref{NN1} and $\mathsf{NN}_2$ in \eqref{NN2} are replaced by $(\mathbf{x}_{1},t,a)$ and $(\tilde{\mathbf{x}}_{2},t,a)$, respectively.
\Cref{problem3_fixT} shows 2D KS solutions under different parameters $A$ in \eqref{v_example} obtained by our DeepLagrangian method and IPM, which implies that the solution computed by our method aligns with the reference solution computed by IPM. 

 \begin{figure}[!htb]
	\centering
	\subfloat[][$A=10$]{\includegraphics[width=.5
 \textwidth]{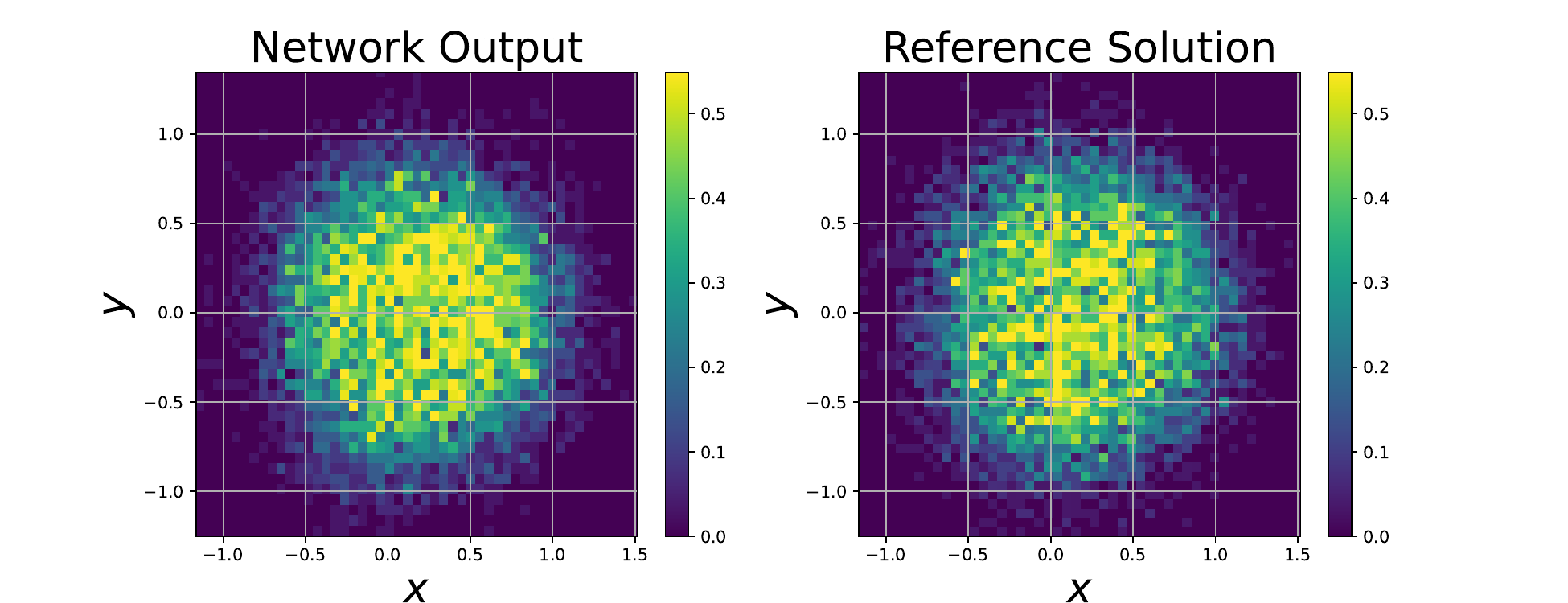}}
	\subfloat[][$A=80$]{\includegraphics[width=.5\textwidth]{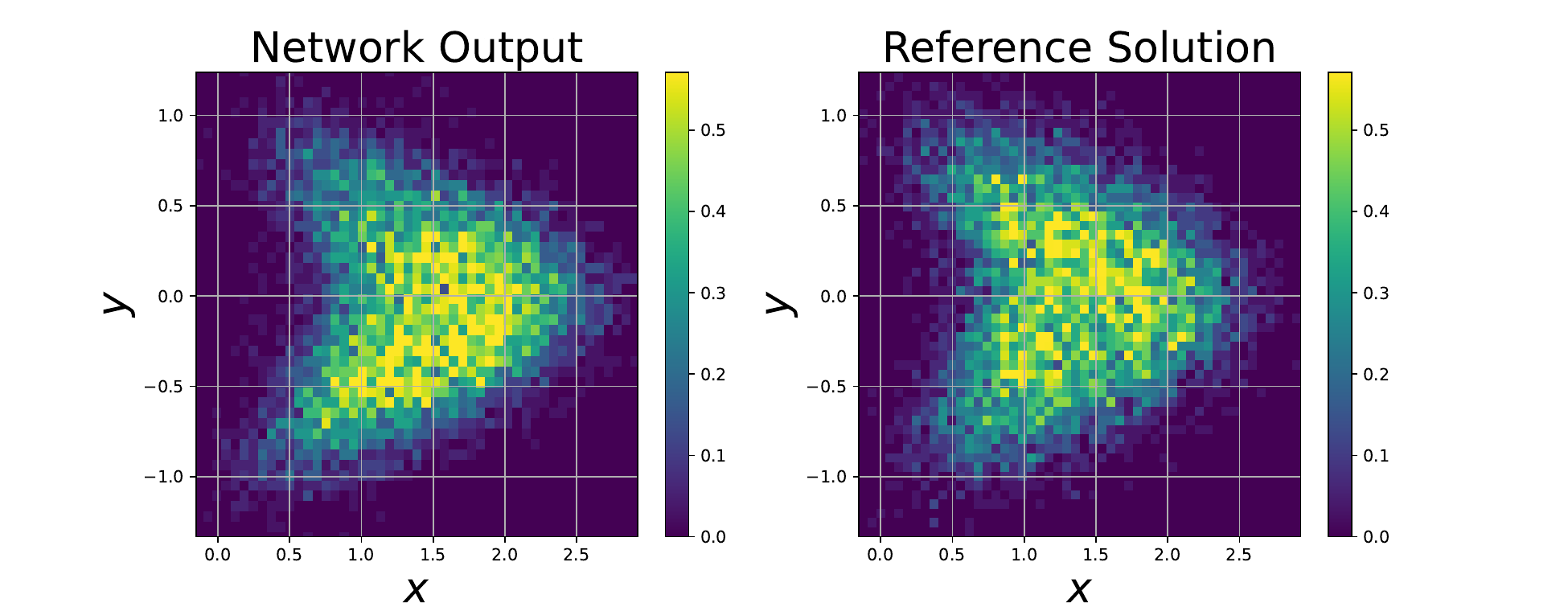}}\\
	\subfloat[][$A=100$]{\includegraphics[width=.5\textwidth]{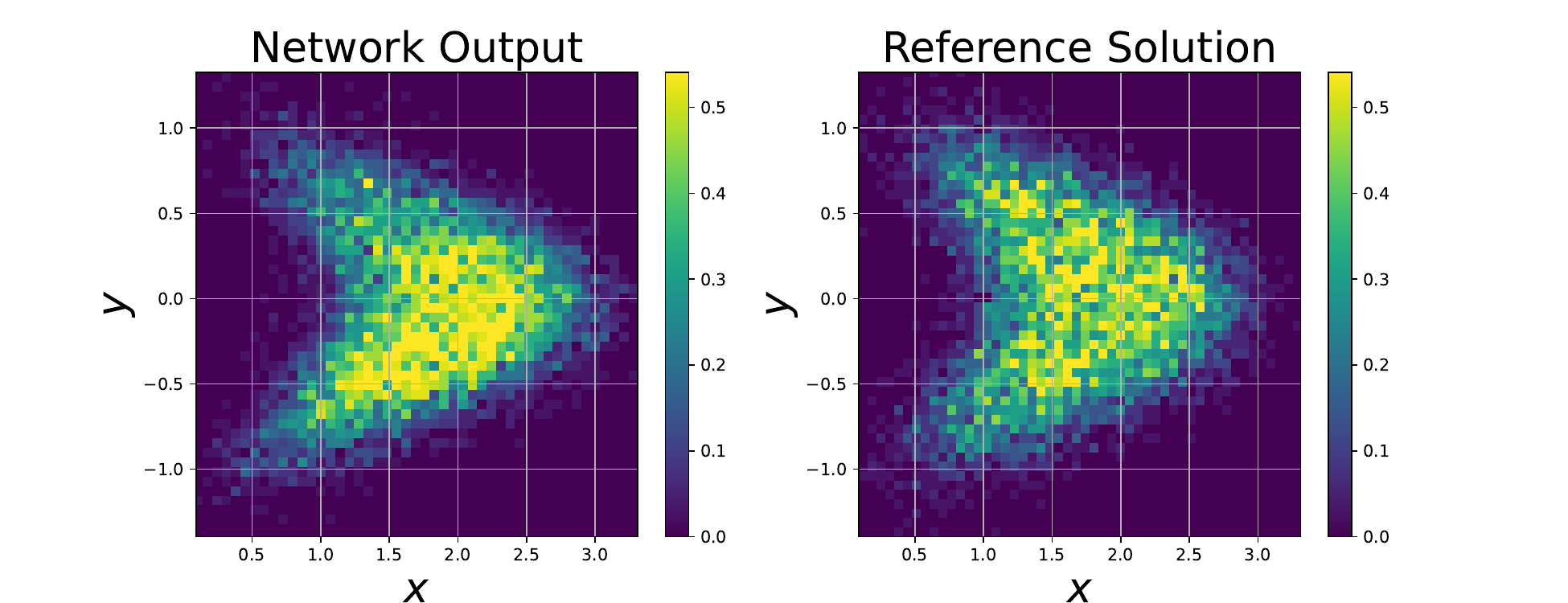}}
	\subfloat[][$A=150$ (prediction)]{\includegraphics[width=.5\textwidth]{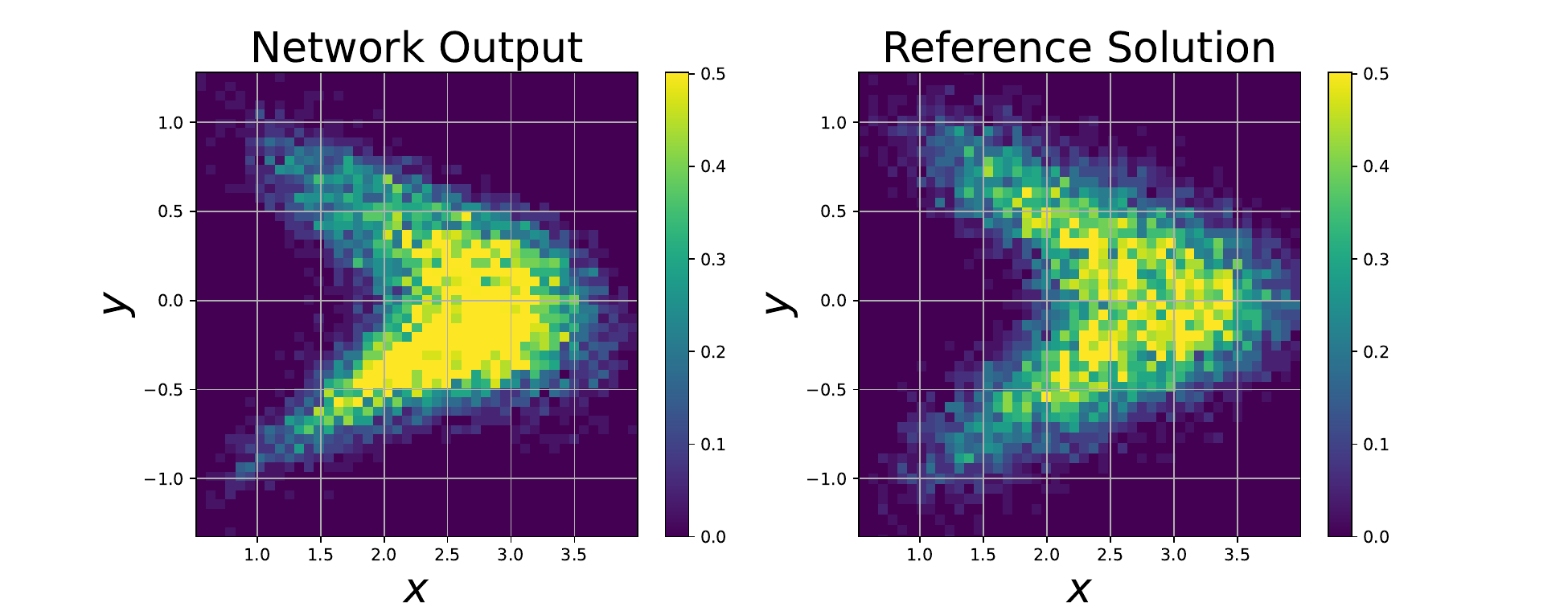}}
	\caption{Solutions of 2D KS system with advection for different parameters, test problem 3.}
    \label{problem3_fixT}
\end{figure}

\subsection{Test problem 4: 3D KS system}\label{test4_3D}
The KS system with $d=3$ and $\mathbf{v}=0$ in \eqref{fks} is considered. The initial condition is set to an unnormalized uniform distribution over a 3D ball of radius 1 centered at the origin.  The total mass is set to $M=24\pi$. The final time is set to $T=0.09$.

To achieve time marching strategies, $[0,T]$ is equally divided into three parts and then our DeepLagrangian method applies a time-dependent KRnet to approximate the KS solution on each part.
For the time-dependent KRnet, its prior $p_{\mathbf{z}}(\mathbf{z})$ is specified as a standard Gaussian distribution.
\Cref{problem4_plot} gives the histogram of the network output of our method projected on the $xy$ plane, which is
consistent with that of IPM. 
\Cref{problem4_plot_3d} provides the particle distributions generated by our method and IPM at $t=0.01,0.04,0.08,0.09$, where it is clear that the particles aggregate over time and the particle distribution obtained by our method looks very similar to that obtained by IPM. 
\begin{figure}[!htb]
	\centering
	\subfloat[][$t=0.01$]{\includegraphics[width=.5
 \textwidth]{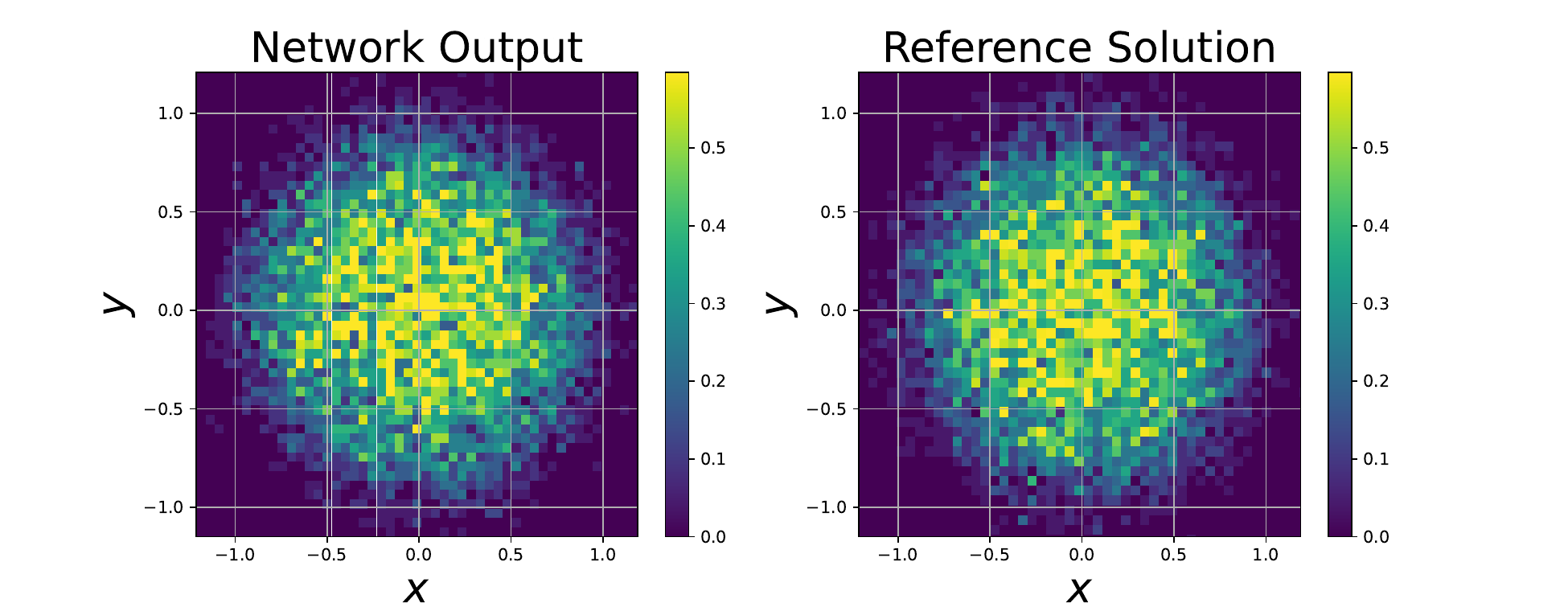}}
	\subfloat[][$t=0.04$]{\includegraphics[width=.5\textwidth]{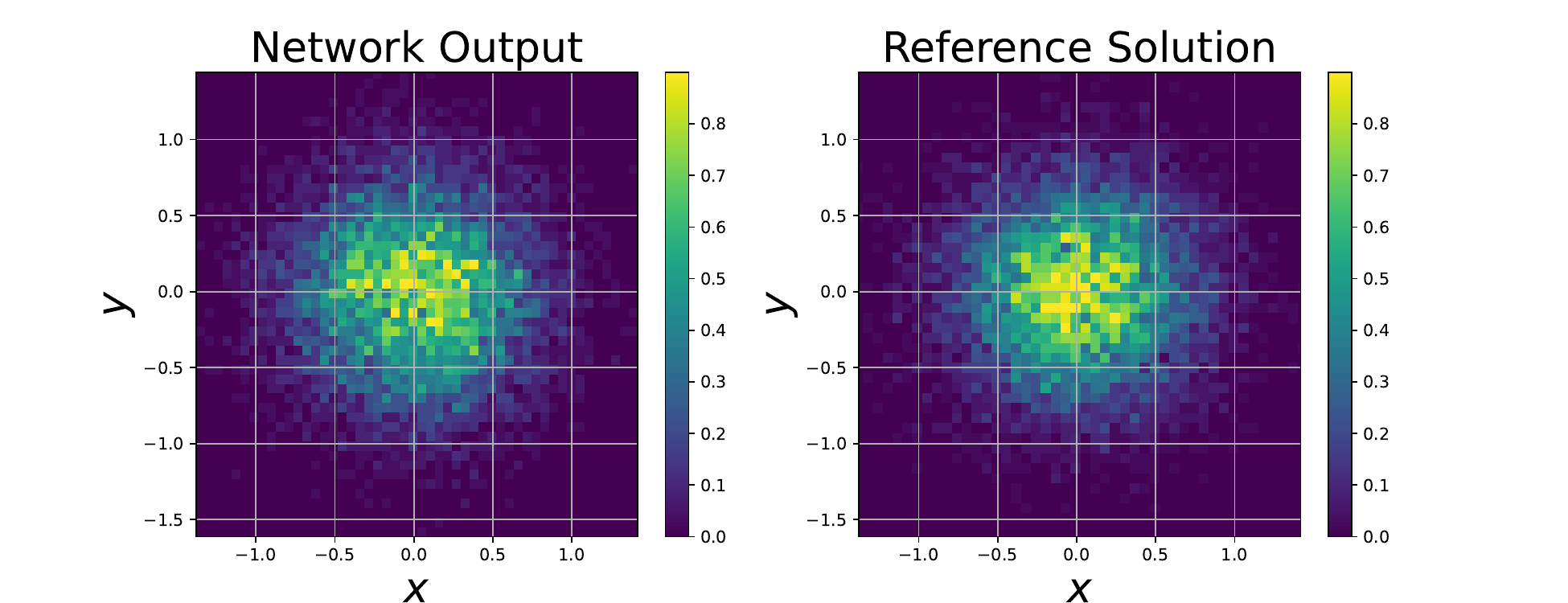}}\\
	\subfloat[][$t=0.08$]{\includegraphics[width=.5\textwidth]{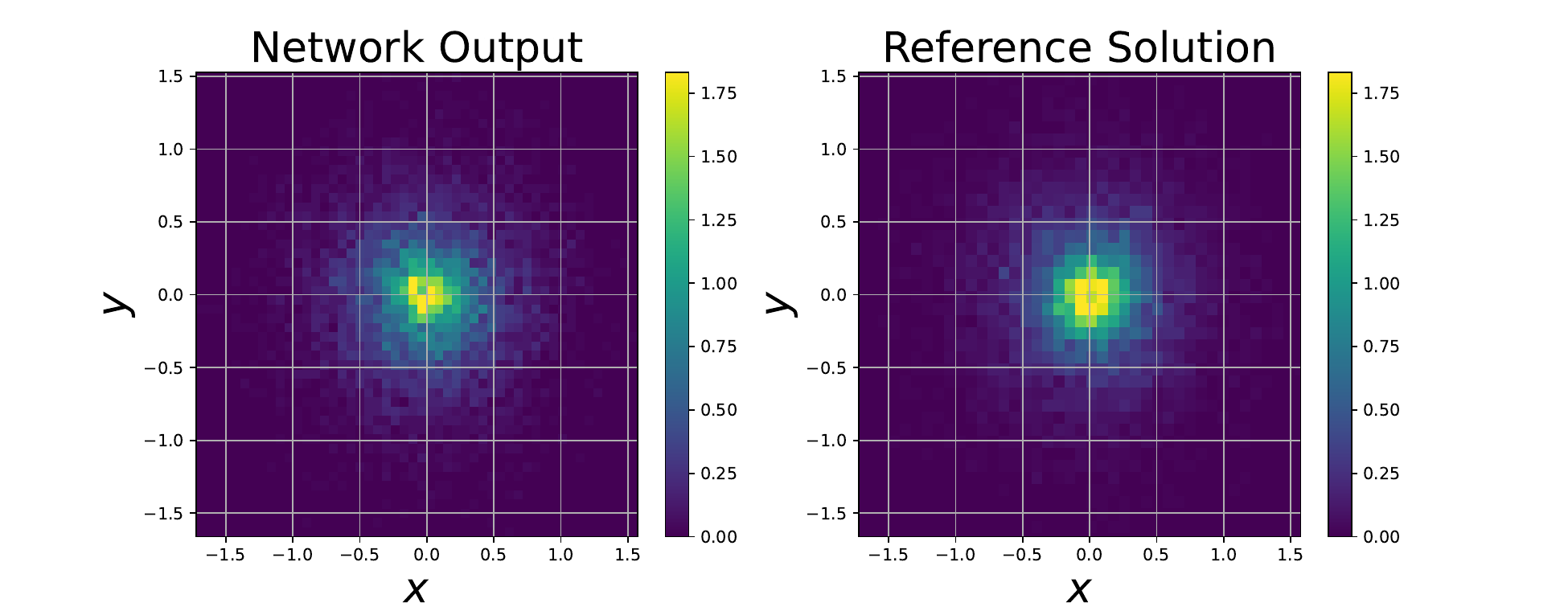}}
	\subfloat[][$t=0.09$]{\includegraphics[width=.5\textwidth]{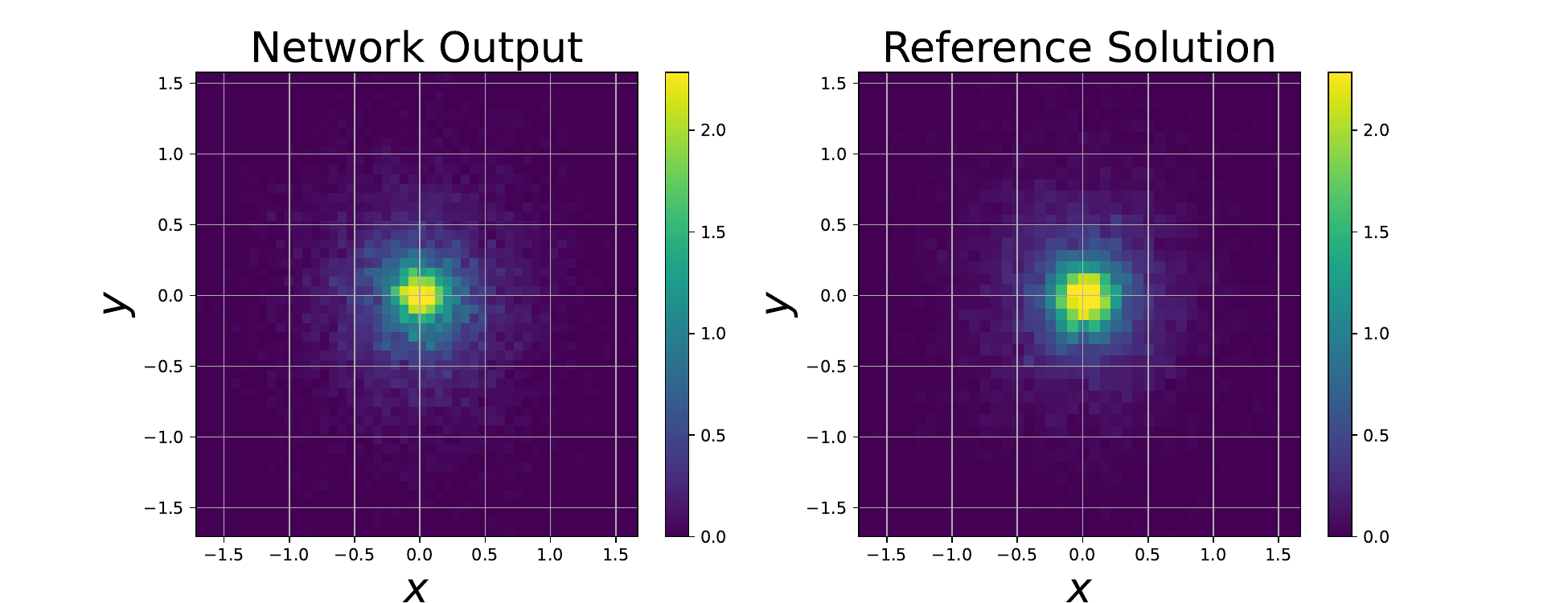}}
	\caption{Solutions of the 3D KS system projected to $xy$ plane, test problem 4.}
    \label{problem4_plot}
\end{figure}
\begin{figure}[H]
    \centering
\includegraphics[width=0.95\linewidth]{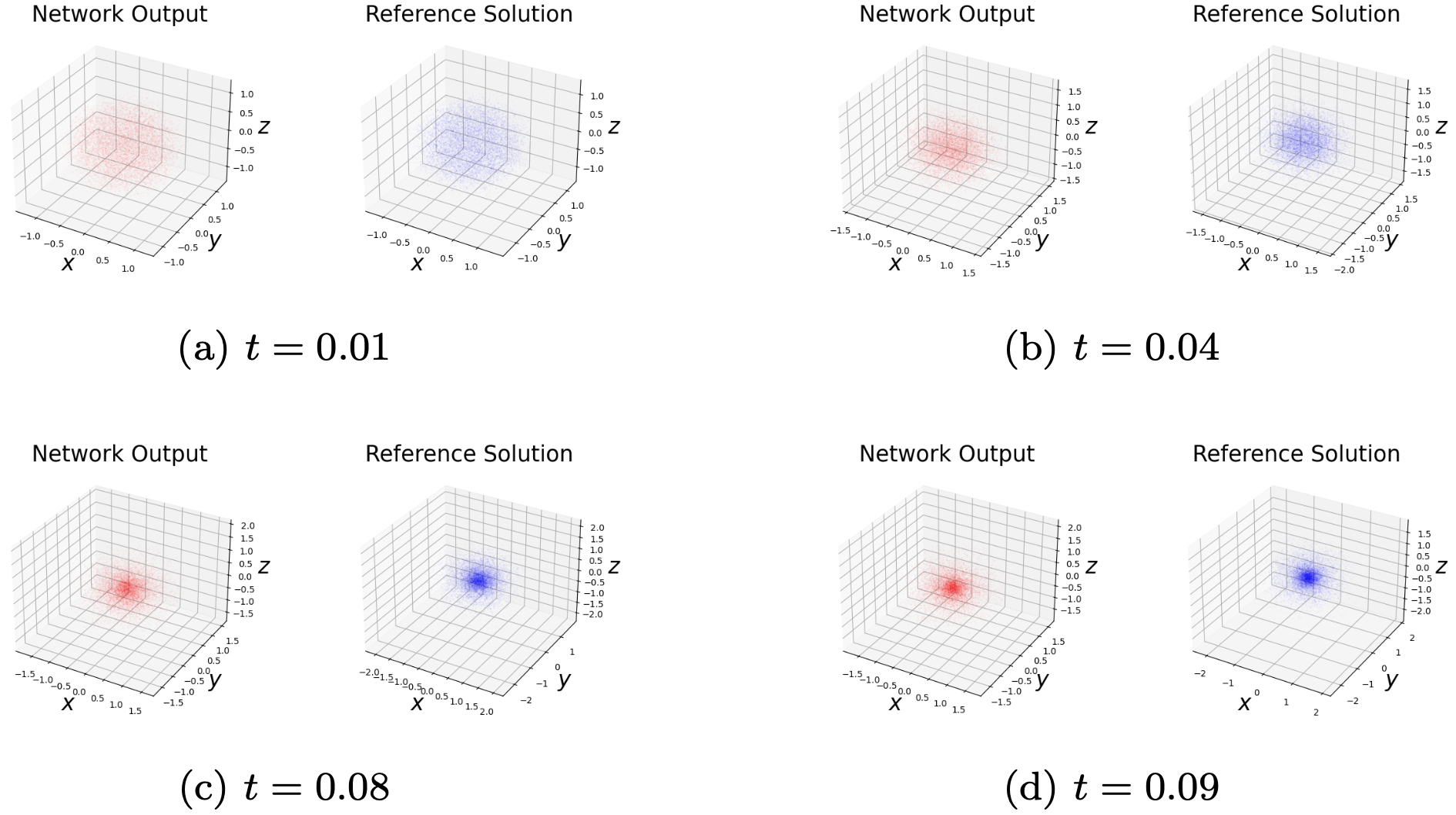}
    \caption{Particle distributions of the 3D KS system, test problem 4.}
    \label{problem4_plot_3d}
\end{figure}

\section{Conclusion}
This paper presents a DeepLagrangian method for learning aggregation patterns of 
Keller-Segel (KS) chemotaxis systems, particularly in handling near-singular solutions such as blow-up and concentration phenomena. By normalizing KS solutions into a probability density function (PDF) and then reformulating the KS system into a Lagrangian framework, our method can adapt to the near-singular solutions. We define a physics-informed Lagrangian loss for enforcing the Lagrangian framework, and when combined with time-marching strategies, we employ a time-dependent KRnet to approximate the PDF by minimizing the loss. This allows us to accurately approximate the solutions of 2D and 3D KS systems under different initial conditions and physical parameters. Compared to our method without time-marching strategies, as well as PINNs and Adaptive-PINNs, our method with time-marching strategies demonstrates superior accuracy in capturing complex aggregation patterns and near-singular behaviors. Future work will focus on developing adaptive time-marching strategies to achieve adaptive time-interval lengths and extending the proposed method to other complicated KS systems, such as those describing cancer cell invasion \cite{hu2024stochastic}.

\section*{Acknowledgments} 
The simulations are performed using research computing facilities offered by Information Technology Services, The University of Hong Kong.

\bibliographystyle{siamplain}
\bibliography{main}
\end{document}